\documentclass[11pt]{amsart}

\usepackage[T1]{fontenc}
\usepackage[utf8]{inputenc}
\usepackage{amsmath,amssymb,amsthm}
\usepackage{booktabs}
\usepackage[expansion=false]{microtype}
\usepackage[colorlinks=true,linkcolor=blue,citecolor=blue,urlcolor=blue]{hyperref}

\allowdisplaybreaks

\newcommand{\F}{\mathbb{F}}
\newcommand{\PP}{\mathbb{P}}
\newcommand{\A}{\mathbb{A}}
\newcommand{\OO}{\mathcal{O}}
\newcommand{\mm}{\mathfrak{m}}
\newcommand{\tX}{\widetilde{X}}
\newcommand{\tC}{\widetilde{C}}
\newcommand{\tO}{\widetilde{\OO}}
\DeclareMathOperator{\Sing}{Sing}
\DeclareMathOperator{\gon}{gon}
\DeclareMathOperator{\divi}{div}
\DeclareMathOperator{\Con}{Con}

\theoremstyle{plain}
\newtheorem{theorem}{Theorem}[section]
\newtheorem{proposition}[theorem]{Proposition}
\newtheorem{lemma}[theorem]{Lemma}
\newtheorem{corollary}[theorem]{Corollary}

\theoremstyle{definition}
\newtheorem{definition}[theorem]{Definition}
\newtheorem{example}[theorem]{Example}

\theoremstyle{remark}
\newtheorem{remark}[theorem]{Remark}

\begin{document}

\title[Maximal singular curves over finite fields]{Maximal singular curves over finite fields from elliptic and hyperelliptic curves}

\author[C.~Hilario]{Cesar Hilario}
\address{Departamento de Ciencias, Pontificia Universidad Cat\'olica del Per\'u, Av. Universitaria 1801, San Miguel 15088, Lima, Peru}
\email{cesar.hilario.pm@gmail.com}

\author[R.~Salom\~ao]{Rodrigo Salom\~ao}
\address{Departamento de Matem\'atica Aplicada, Universidade Federal Fluminense, Niter\'oi, RJ, Brazil}
\email{rsalomao@id.uff.br}

\author[R.~V.~Martins]{Renato Vidal Martins}
\address{Departamento de Matem\'atica, ICEx, Universidade Federal de Minas Gerais, Belo Horizonte, MG, Brazil}
\email{rvidalmartins@gmail.com}

\subjclass[2020]{Primary 14G05; Secondary 14G15, 14H20, 11G20}
\keywords{Singular curves, finite fields, rational points, maximal curves, non-split singularities, gonality}

\begin{abstract}
We construct explicit families of maximal singular curves over finite fields. A maximal \emph{singular} curve is a curve that attains the Aubry–Perret bound, which is the natural extension of the Hasse--Weil--Serre bound on the maximum number of rational points on a smooth curve over a finite field. Starting from a smooth elliptic or hyperelliptic curve \( \tilde{C} \) over \(\mathbb{F}_{q}\) (\(q\) odd), we generate singular curves \(C\) with non-split nodes or cusps.  In our approach we use linear projections of geometric embeddings, and we apply Stöhr's embedding of hyperelliptic Gorenstein curves and the Rosa–Stöhr theory of trigonal Gorenstein curves. The construction is applied to the Tafazolian and Tafazolian–Torres smooth maximal curves. Finally, we also determine the gonality of all obtained curves.
\end{abstract}

\maketitle


\section{Introduction}\label{sec:intro}

Throughout this paper, a \emph{curve} is a complete, geometrically irreducible, algebraic curve defined over the finite field $\F_q$ with $q$ elements. Singular curves over finite fields arise naturally in several problems in arithmetic geometry and its applications. They appear, for instance, in the geometric construction of error-correcting codes

From a more intrinsic point of view, the zeta function of a singular curve was studied by Aubry and Perret \cite{AP96,AP04} and by Z\'u\~niga-Galindo \cite{Zu98}, and their results show that the arithmetic of a singular curve is governed by that of its normalization together with discrete invariants of the singular points.

Let $C$ be a curve over $\F_q$, let $\nu\colon\tC\to C$ be its normalization, and denote by $g$ the genus of $\tC$, called the \emph{geometric genus} of $C$, and by $\pi$ the \emph{arithmetic genus} of $C$. The difference $\delta:=\pi-g$ is the sum of the singularity degrees of the (finitely many) singular points of $C$. Aubry and Perret \cite{AP96} proved that the number of rational points of $C$ satisfies a Hasse--Weil-type inequality which, combined with the Serre bound on the normalization, gives
\begin{equation}\label{eq:AP}
\#C(\F_q)\ \leq\ q+1+g\,\lfloor 2\sqrt{q}\rfloor+\pi-g .
\end{equation}
Z\'u\~niga-Galindo \cite{Zu98} subsequently refined this estimate by taking into account the structure of the generalized Jacobian of $C$. Following \cite{AI15}, we call a curve attaining the bound \eqref{eq:AP} \emph{maximal}. When $C$ is smooth we have $\pi=g$ and \eqref{eq:AP} recovers the bound in the classical setting.

While smooth maximal curves have been investigated intensively, explicit families of \emph{singular} maximal curves are scarce in the literature. Fukasawa, Homma and Kim \cite{FHK12} studied the rational plane curve of degree $q+1$ given by the image of $\PP^1\ni(s:t)\mapsto(s^{q+1}:s^qt+st^q:t^{q+1})\in\PP^2$, nowadays called the Ballico--Hefez curve, and showed that it attains the bound \eqref{eq:AP} with $g=0$ and $\pi=(q^2-q)/2$; Ballico \cite{Bal} extended this construction to non-rational curves by means of seminormal curves with axial singularities. To the best of our knowledge, these are the only explicit families of maximal singular curves in the literature. On the other hand, Aubry and Iezzi \cite{AI15,AI17} carried out a systematic study of the maximum number $N_q(g,\pi)$ of rational points on a curve of geometric genus $g$ and arithmetic genus $\pi$, characterizing in \cite{AI17} the pairs $(g,\pi)$ for which the bound $N_q(g,\pi)\leq N_q(g)+\pi-g$ is attained. A structural result of theirs \cite[Proposition~5.2]{AI15} asserts that on a curve attaining this latter bound every singular point is rational and unibranch, its unique branch being a closed point of degree two on the normalization, and its singularity degree being equal to one.

This characterization motivates us to introduce the notion of a \emph{non-split singularity} (see Definition~\ref{def:nonsplit}), which is a rational unibranch singular point whose branch has degree two and whose singularity degree equals one. For plane curves, the typical example is an ordinary double point whose tangent cone is irreducible over $\F_q$ and splits into two conjugate lines over $\F_{q^2}$, hence the terminology. The purpose of the present paper is to construct explicit maximal singular curves all of whose singularities are non-split, with elliptic or hyperelliptic normalization, and to study the geometry of the curves so obtained, notably by computing their gonality over the ground field.

Our main construction is carried out in Section~\ref{sec:construction}. Here we build maximal singular curves out of three types of maximal smooth curves: elliptic, hyperelliptic with $2\leq g \leq 3$, and hyperelliptic with $g\geq 4$. To give an idea of how this works, let $q$ be odd and let $\tC\colon y^2=f(x)$ be a smooth curve of genus $g\geq 1$ over $\F_q$, with $f\in\F_q[x]$ squarefree of degree $2g+1$, and let $P$ be a closed point of $\tC$ of degree two, which lies under a point $(\alpha,\beta)\in\tC\otimes\F_{q^2}$ with $f(\alpha)\neq 0$ and $\alpha\notin\F_q$. 

In Theorem~\ref{thm:construction} we produce an explicit morphism $\Phi\colon\tC\to\PP^3$, given as the composition of St\"ohr's embedding of hyperelliptic Gorenstein curves \cite{St99} with an explicit linear projection, whose image $C=\Phi(\tC)$ is a curve over $\F_q$ with normalization $\tC$ and with non-split nodes. 
When $\beta\notin\F_q$ there is a unique non-split node (lying under $P$), and when $\beta\in\F_q$ there are exactly two non-split nodes (lying under $P$ and its hyperelliptic conjugate $P^-$). For $2\leq g\leq 3$ these are the only singularities of $C$, hence $\pi=g+k$ and $\#C(\F_q)=\#\tC(\F_q)+k$ with $k\in\{1,2\}$ the number of nodes, and so $C$ attains the Aubry--Perret bound \eqref{eq:AP} over $\F_q$ whenever $\tC$ is maximal over $\F_q$.

For $g=1$  and $g\geq 4$, on the other hand, the projected curve acquires one further singularity $Q_\infty$ at the image of the point at infinity of $\tC$; this singularity is a rational unibranch point, actually an ordinary cusp when $g=1$. 
To achieve maximality for these genera two further constructions are needed: for $g=1$, an additional modified projection yields a curve $C\subset\PP^3$ whose unique singularity is a non-split node (see Theorem~\ref{thm:elliptic}); and for $g\geq 4$ the Rosa--St\"ohr theory of trigonal Gorenstein curves \cite{RS02} produces, out of the same data, a canonical curve $C$ on a cone in $\PP^{g}$ whose unique singularity is a non-split node at the vertex (see Theorem~\ref{thm:cone}). 

In all these cases the curve $C$ attains the Aubry--Perret bound over $\F_q$ whenever $\tC$ is maximal over $\F_q$, and we apply the constructions to the maximal hyperelliptic curves $y^2=x^m+1$ of Tafazolian \cite{Taf12} and to the Artin--Schreier family $y^2=x^q+x$ of Tafazolian--Torres \cite{TT17}, besides working out explicit examples over $\F_{25}$, $\F_{81}$ and $\F_3$.

In Section~\ref{sec:gonality} we study the gonality of the curves \(C\) constructed in Section~\ref{sec:construction}, computed over the ground field $\mathbb{F}_q$. In the classical setting, where the ground field is algebraically closed, the gonality of a curve $C$ is defined as the smallest \(d\) for which \(C\) admits a \(g_{d}^{1}\) or equivalently, if \(C\) is smooth, as the smallest \(d\) for which there exists a \(d\)-cover \(C\to\mathbb{P}^1\). As is well-known, the gonality of a smooth curve (and, more generally, of a higher-dimensional variety) is a fundamental invariant that measures how far the variety is from being rational. Over a finite ground field, this notion acquires further arithmetic importance due to its natural connection to the theory of AG codes \cite{P}. 

It turns out that for singular curves, the equivalence between morphisms and pencils may fail. In fact, the theory of linear systems in the singular setting dates back to Altman and Kleiman \cite{AK}, where invertible sheaves (divisors) are replaced by torsion-free sheaves of rank \(1\) (pseudo-divisors). A similar approach, which we adopt here, was introduced by St\"ohr \cite{St93} and Carvalho \cite{Car99} using the notion of a ``by-product" divisor (a coherent fractional ideal sheaf). In both frameworks, a \(g_{d}^{1}\) no longer necessarily defines a \(d\)-cover \(C\to\mathbb{P}^1\), and the prevailing definition of gonality becomes that of a minimal \(g_{d}^{1}\). This led Rosa and St\"ohr \cite{RS02} to introduce the notion of {\it non-removable} base points of a \(g_{d}^{1}\), yielding a new class of curves with gonality \(3\) that, unexpectedly, do not admit triple covers to \(\mathbb{P}^{1}\). Such curves will appear frequently throughout this article.
We summarize our gonality computations (see Theorems \ref{thm:gonality} and \ref{thm:char}) in the table below.

\begin{table}[ht]
\small
\begin{tabular}{|c|c|c|c|c|c|}
\hline
curve of thm. & case & sing. & $\pi$ & gon.\ & examples\\
\hline
\ref{thm:elliptic} & $g=1$, any $\beta$ & $1$ node & $2$ & $2$ & \ref{ex:g1max}\\
\hline
 & $g=1$, $\beta\notin\F_q$ & node, cusp & $3$ & $3$ & \ref{ex:g1cusp}\\
 & $g=1$, $\beta\in\F_q$ & $2$ nodes, cusp & $4$ & $3$ & ---\\
\ref{thm:construction} & $g\in\{2,3\}$, $\beta\notin\F_q$ & $1$ node & $g+1$ & $3$ & \ref{ex:g2max}\\
 & $g=2$, $\beta\in\F_q$ & $2$ nodes & $4$ & $3$, $4$ & \ref{ex:F3trigonal}, \ref{ex:F3tetragonal}\\
 & $g=3$, $\beta\in\F_q$ & $2$ nodes & $5$ & $4$ & ---\\
\hline
\ref{thm:cone} & $g\geq4$, any $\beta$ & $1$ node & $g+1$ & $3$ & ---\\
\hline
\end{tabular}
\end{table}

In all aforementioned cases, the central idea is to explore (via Proposition~\ref{prop:degformula}) how the local structure of the singularities, combined with the properties of the normalization, determine the degree of the linear systems that compute gonality. More precisely, we conduct a fine analysis of the local rings of the singularities together with a study of how the group structure of elliptic curves or the divisor class group affects the existence of rational functions with prescribed pole divisors. Both approaches are crucial to determine the degree of the divisors on the singular curve.

\subsection*{Acknowledgements.} The third named author is partially supported by CNPq grant number 308950/2023-2 and FAPESP grant number 2024/15918-8.

\section{Definitions and basic results}\label{sec:basics}

\subsection{Singular curves, rational points and the Aubry--Perret bound}\label{subsec:setup}

As in the introduction, for simplicity, by a \emph{curve} we mean a complete, geometrically irreducible, algebraic curve defined over $\F_q$. Our notation follows \cite{Zu98} and \cite{AI15}. Let $X$ be a curve over $\F_q$ with function field $\F_q(X)$, and let $\nu\colon\tX\to X$ be its normalization. Both $\tX$ and $\nu$ are defined over $\F_q$, and $\F_q(\tX)=\F_q(X)$. For a closed point $Q\in X$ we denote by $\OO_{X,Q}$ the local ring of $X$ at $Q$, by $\mm_{X,Q}$ its maximal ideal, by $\F_q(Q):=\OO_{X,Q}/\mm_{X,Q}$ its residue field and by $\deg Q:=[\F_q(Q):\F_q]$ its degree. A point of degree one is called \emph{rational}. The integral closure of $\OO_{X,Q}$ in $\F_q(X)$ is
\[
\tO_Q=\bigcap_{P\mapsto Q}\OO_{\tX,P},
\]
where the intersection runs over the (finitely many) points $P\in\tX$ lying over $Q$, which are called the \emph{branches} of $X$ at $Q$. The point $Q$ is said to be \emph{unibranch} if there is exactly one such branch $P$. The \emph{singularity degree} or \emph{$\delta$-invariant} of $Q$ is
\[
\delta_Q:=\dim_{\F_q}\tO_Q/\OO_{X,Q},
\]
so that $\delta_Q=0$ if and only if $Q$ is a nonsingular point. Setting $\delta:=\sum_{Q\in\Sing X}\delta_Q$, the \emph{arithmetic genus} of $X$ is equal to $\pi=g+\delta$, where $g$ is the genus of $\tX$ (see \cite[Section~2]{AI15}).

The following elementary relation between the rational points of $X$ and those of $\tX$ will be used repeatedly; see the proof of \cite[Proposition~2.3]{AP96}. For a rational singular point $Q\in X$, letting $a_Q:=\#\{P\in\tX(\F_q):\nu(P)=Q\}$ denote the number of rational branches of $X$ at $Q$, the following holds
\begin{equation}
\label{lem:pointcount}
\#X(\F_q)=\#\tX(\F_q)+\sum_{Q\in\Sing X\cap X(\F_q)}\bigl(1-a_Q\bigr).
\end{equation}


Our starting point is a result proved by Aubry and Perret in \cite[Proposition~2.3]{AP96}, which combined with the Serre bound $\#\tX(\F_q)\leq q+1+g\lfloor 2\sqrt{q}\rfloor$ (see \cite[Theorem~5.3.1]{Sti09}) yields the following bound
\begin{equation}
\label{thm:AP}
\#X(\F_q)\ \leq\ q+1+g\,\lfloor 2\sqrt{q}\rfloor+\pi-g,
\end{equation}
where $X$ is a curve over $\F_q$ of geometric genus $g$ and arithmetic genus $\pi$.
\begin{definition}\label{def:maximal}
Following \cite[Definition 5.1.(iii)]{AI15}, a curve $X$ over $\F_q$ attaining the bound~\eqref{thm:AP} is called a \emph{maximal curve} over $\F_q$.
\end{definition}

When $X$ is singular we have $\pi>g$, and maximality forces the normalization $\tX$ to have many rational points and the singularities of $X$ to convert closed points of higher degree of $\tX$ into additional rational points of $X$, in the sense of~\eqref{lem:pointcount}. In the next subsection we will make this mechanism precise.

\subsection{Non-split singularities}\label{subsec:nonsplit}
As recalled in the introduction, explicit families of maximal singular curves are rare. For instance, in \cite{BH}, Ballico--Hefez introduced a family of rational ($g=0$) plane curves of degree $q+1$ (and $\pi=(q^2-q)/2$) over an algebraically closed field, as an example of non-reflexive curves, where $q$ is a power of the characteristic. Fukasawa, Homma and Kim proved in \cite[ Corollary 2.3]{FHK12} that those curves, when studied over $\mathbb{F}_q$, attain the bound \eqref{thm:AP}. Later on, Ballico in \cite{Bal}, and Aubry--Iezzi in \cite{AI15}, found several families of non-rational maximal singular curves, though, this time by means of intrinsic methods. We are not aware of other explicit families in the literature.

Our constructions will be guided by the following structural result of Aubry and Iezzi, stated here in a form adapted to our purposes.

\begin{proposition}[{\cite[Proposition~5.2]{AI15}}]
\label{prop:AIcharacterization}
If $X$ is maximal, then so is $\widetilde{X}$. Moreover, every singular point $Q$ of $X$ satisfies the following properties: (a) $Q$ is rational; (b) $\delta_Q=1$; (c) $Q$ is unibranch and its only branch $P$ in $\widetilde X$ has degree 2.
\end{proposition}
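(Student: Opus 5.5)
The plan is to combine the point-count formula \eqref{lem:pointcount} with the Serre bound for $\tX$. The aim is to show that maximality of $X$ forces every inequality in the chain to be an equality. Write $\delta=\pi-g=\sum_{Q\in\Sing X}\delta_Q$. Let $S_1=\Sing X\cap X(\F_q)$ be the set of rational singular points. Then \eqref{lem:pointcount} gives
\[
\#X(\F_q)=\#\tX(\F_q)+\sum_{Q\in S_1}(1-a_Q)\le \#\tX(\F_q)+\#S_1 .
\]
Each $Q\in\Sing X$ has $\delta_Q\ge 1$, so $\#S_1\le\#\Sing X\le\sum_Q\delta_Q=\delta$. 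Together with the Serre bound $\#\tX(\F_q)\le q+1+g\lfloor2\sqrt q\rfloor$, this yields
\[
\#X(\F_q)\le \#\tX(\F_q)+\delta\le q+1+g\lfloor 2\sqrt q\rfloor+\pi-g .
\]
If $X$ is maximal, every inequality here is an equality.

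Reading off the equalities gives most of the statement. From $\#\tX(\F_q)=q+1+g\lfloor2\sqrt q\rfloor$ we get that $\tX$ is maximal. From $\#S_1=\#\Sing X$ we get (a): every singular point is rational. From $\#\Sing X=\sum\delta_Q$ with all $\delta_Q\ge1$ we get (b): $\delta_Q=1$ for all $Q$. From $\sum_{Q\in S_1}(1-a_Q)=\#S_1$ we get $a_Q=0$, so no branch of any singular point is rational.

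For (c), I would use a local computation at a rational point $Q$ with $\delta_Q=1$. Since $\OO_{X,Q}\supseteq \F_q+\mm_{X,Q}$ and $\mathfrak c\subseteq \mm_{X,Q}$, where $\mathfrak c$ is the conductor, we get
\[
\dim_{\F_q}\tO_Q/\mm_{X,Q}=\delta_Q+1=2 .
\]
Moreover $\mm_{X,Q}$ is an ideal of $\tO_Q$: if $\tO_Q\mm_{X,Q}\ne \mm_{X,Q}$, then $\OO_{X,Q}+\tO_Q\mm_{X,Q}$ would sit strictly between $\OO_{X,Q}$ and $\tO_Q$, which is impossible because the quotient has dimension $1$. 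So $\mm_{X,Q}$ is contained in the Jacobson radical $\bigcap_{P\mapsto Q}\mm_P$ of $\tO_Q$. Hence
\[
\sum_{P\mapsto Q}\deg P=\dim_{\F_q}\tO_Q\big/\textstyle\bigcap_{P}\mm_P\le 2 .
\]
The branches have no rational member, so each has degree at least $2$. Therefore there is exactly one branch $P$, and $\deg P=2$.

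The only delicate step is this local algebra: that $\delta_Q=1$ makes $\mm_{X,Q}$ an $\tO_Q$-ideal, and the dimension count $\tO_Q/\mm_{X,Q}$. I would verify these carefully via the conductor and the semilocal structure of $\tO_Q$. The rest is bookkeeping of the equality cases.
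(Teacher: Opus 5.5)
The paper does not prove this proposition; it quotes it from Aubry--Iezzi \cite[Proposition~5.2]{AI15}. Your argument is a self-contained proof of it and is correct in outline, but one step in the local part is wrong as written and needs a one-line repair.

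\textbf{Global part.} This part is fine. Chain \eqref{lem:pointcount}, $a_Q\geq 0$, $\#S_1\leq\#\Sing X\leq\sum_Q\delta_Q$ and the Serre bound; this reproduces \eqref{thm:AP}. Each link is a sum of termwise inequalities, so maximality forces equality term by term. That gives $\tX$ maximal, every singular point rational, every $\delta_Q=1$, and $a_Q=0$, i.e.\ no rational branch.

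\textbf{The faulty step.} You go through the claim that $\mm_{X,Q}$ is an ideal of $\tO_Q$, and this detour has two problems.
\begin{enumerate}
\item[(i)] You never show that $\OO_{X,Q}+\tO_Q\mm_{X,Q}\neq\tO_Q$, which you need for ``strictly between''. Proving it actually requires $a_Q=0$ together with the Jacobson radical.
\item[(ii)] Even granting that $\mm_{X,Q}$ is an ideal, your ``so $\mm_{X,Q}$ is contained in the Jacobson radical'' does not follow. A proper ideal of a semilocal ring with several maximal ideals need not lie in their intersection.
\end{enumerate}

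\textbf{The repair.} The detour is unnecessary. For every branch $P$ of $Q$ you have $\mm_{X,Q}=\mm_{\tX,P}\cap\OO_{X,Q}\subseteq\mm_{\tX,P}$, simply because $P$ lies over $Q$. Hence $\mm_{X,Q}\subseteq J:=\bigcap_{P\mapsto Q}\bigl(\mm_{\tX,P}\cap\tO_Q\bigr)$ directly. By the Chinese remainder theorem, $\tO_Q/J\cong\prod_{P\mapsto Q}\F_q(P)$. This is an $\F_q$-linear quotient of $\tO_Q/\mm_{X,Q}$, whose dimension is $\delta_Q+\deg Q=2$, so $\sum_{P\mapsto Q}\deg P\leq 2$. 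The conductor plays no role in this count. Since $a_Q=0$, every branch has degree at least $2$, so there is exactly one branch, of degree $2$, as required. The same computation gives the general local inequality $\sum_{P\mapsto Q}\deg P\leq\delta_Q+\deg Q$, which is the natural companion to the point-count bookkeeping.
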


This characterization motivates the following definition.

\begin{definition}\label{def:nonsplit}
A point $Q$ of a curve $X$ over $\F_q$ is called a \emph{non-split singularity} if it satisfies (a), (b), and (c) above. 
\end{definition}

\begin{remark}\label{rem:nonsplitplane}
For plane curves, the typical example of a non-split singularity is an ordinary double point $Q$ whose tangent cone $ax^2+bxy+cy^2$ is irreducible in $\F_q[x,y]$, and hence splits into two distinct conjugate linear factors in $\F_{q^2}[x,y]$. Indeed, over $\overline{\F}_q$ the point $Q$ is an ordinary node, so $\delta_Q=1$ and there are two geometric branches; since these branches are exchanged by Frobenius (as their tangent directions are) they are glued into a single closed point of degree two on the normalization, so that $X$ is unibranch at $Q$ over $\F_q$. The terminology is borrowed from the classical dichotomy between split and non-split ordinary double points, according to whether the two branches are individually defined over the ground field or not; cf.\ \cite{Bal}. Since a non-split singularity has $a_Q=0$, by~\eqref{lem:pointcount} it increases the number of rational points by one with respect to the normalization, hence if all singularities of $X$ are non-split one has
\[
\#X(\F_q)=\#\tX(\F_q)+\#\Sing X .
\]
\end{remark}

\begin{example}\label{ex:cubic}
Let $q$ be odd and consider the plane cubic over $\F_q$
\[
C_q\colon\ (x^2+y^2)z+xy^2=0\ \subset\ \PP^2 .
\]
The cubic $C_q$ is irreducible, since $x^2+y^2$ and $xy^2$ are coprime, and has arithmetic genus $\pi=1$, by the genus-degree formula for plane curves. The Jacobian criterion gives $\Sing C_q=\{Q_0\}$, where $Q_0=(0:0:1)$ is an ordinary node, whence $\delta_{Q_0}=1$ and $g=0$. The tangent cone at $Q_0$ is defined by the polynomial $x^2+y^2$, which is irreducible over $\F_q$ exactly when $-1$ is a non-square in $\F_q$, that is, exactly when $q\equiv 3\pmod 4$. So for $q\equiv 3\pmod 4$ the node is non-split by Remark~\ref{rem:nonsplitplane}, and therefore $C_q$ is maximal:
\[
\#C_q(\F_q)=(q+1)+1=q+1+g\lfloor 2\sqrt{q}\rfloor+\pi-g .
\]
\end{example}

We now establish the local-ring characterization of non-split singularities, which will be the key algebraic tool both in the constructions in Section~\ref{sec:construction} and the gonality computations in Section~\ref{sec:gonality}.

\begin{lemma}\label{lem:localchar}
Let $Q$ be a non-split singularity of a curve $X$ over $\F_q$ and let $P\in\tX$ be the point lying over $Q$. Then, inside $\OO_{\tX,P}$ the following holds
\[
\OO_{X,Q}=\F_q+\mm_{\tX,P}.
\]
\end{lemma}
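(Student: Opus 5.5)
Since $Q$ is unibranch with the single branch $P$, the integral closure is $\tO_Q=\OO_{\tX,P}$, and $\OO_{X,Q}\subseteq\OO_{\tX,P}$. The argument is a dimension count: both $\OO_{X,Q}$ and $\F_q+\mm_{\tX,P}$ will be shown to have codimension one in $\OO_{\tX,P}$ as $\F_q$-vector spaces, and then one inclusion will give equality. The inclusion $\F_q+\mm_{\tX,P}\subseteq\OO_{X,Q}$ is the natural candidate, since containment in the opposite direction is not obvious a priori.

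First I compute the codimension of $\F_q+\mm_{\tX,P}$. We have $\OO_{\tX,P}/\mm_{\tX,P}=\F_q(P)$, which is a field of degree $\deg P=2$ over $\F_q$ by (c). The image of $\F_q+\mm_{\tX,P}$ in this quotient is $\F_q$. Hence $\dim_{\F_q}\OO_{\tX,P}/(\F_q+\mm_{\tX,P})=2-1=1$. On the other side, $\dim_{\F_q}\OO_{\tX,P}/\OO_{X,Q}=\delta_Q=1$ by (b).

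Next I prove $\F_q+\mm_{\tX,P}\subseteq\OO_{X,Q}$. Clearly $\F_q\subseteq\OO_{X,Q}$, so the real content is $\mm_{\tX,P}\subseteq\OO_{X,Q}$. Let $\mathfrak c=\{z\in\OO_{\tX,P}: z\,\OO_{\tX,P}\subseteq\OO_{X,Q}\}$ be the conductor. It is a nonzero ideal of $\OO_{\tX,P}$ contained in $\OO_{X,Q}$, and it is a proper ideal because $\OO_{X,Q}\neq\OO_{\tX,P}$ ($Q$ is singular). Since $\OO_{\tX,P}$ is a DVR, $\mathfrak c=\mm_{\tX,P}^n$ for some $n\geq1$. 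The key step is to show $n=1$. I plan to use the standard inequality $\dim\OO_{\tX,P}/\mathfrak c\le 2\,\dim\OO_{\tX,P}/\OO_{X,Q}=2\delta_Q=2$. Alternatively, one can argue directly: $\mathfrak c$ is an ideal of $\OO_{X,Q}$ with $\OO_{X,Q}/\mathfrak c$ nonzero, and $\OO_{X,Q}/\mathfrak c$ has dimension $\dim\OO_{\tX,P}/\mathfrak c-1$. Since $\dim\OO_{\tX,P}/\mathfrak c=2n$, this gives $2n\le 2$, so $n=1$ and $\mm_{\tX,P}\subseteq\OO_{X,Q}$.

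To avoid citing the conductor inequality, I would give an elementary substitute. $\OO_{X,Q}$ is a local ring with residue field $\F_q$, because $Q$ is rational by (a). Its maximal ideal is $\mm_{X,Q}=\OO_{X,Q}\cap\mm_{\tX,P}$, and it has codimension $1$ in $\OO_{X,Q}$. Hence $\mm_{X,Q}$ has codimension $2$ in $\OO_{\tX,P}$, which is the same codimension as $\mm_{\tX,P}$. Since $\mm_{X,Q}\subseteq\mm_{\tX,P}$, the two are equal, so $\mm_{\tX,P}\subseteq\OO_{X,Q}$ directly. This bypasses the conductor entirely, and it is the route I would actually write. With the inclusion established, equal codimension one in $\OO_{\tX,P}$ forces $\OO_{X,Q}=\F_q+\mm_{\tX,P}$. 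The main step to check carefully is the codimension bookkeeping, namely that the residue field of $Q$ being $\F_q$ gives $\dim\OO_{X,Q}/\mm_{X,Q}=1$.
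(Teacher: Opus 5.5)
Your final route is correct and is essentially the paper's argument. You compare codimensions of $\mm_{X,Q}\subseteq\mm_{\tX,P}$ in $\OO_{\tX,P}$: both equal $2$, the first by rationality of $Q$ together with $\delta_Q=1$, the second by $\deg P=2$. This gives $\mm_{X,Q}=\mm_{\tX,P}$ and hence $\OO_{X,Q}=\F_q+\mm_{\tX,P}$.

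The paper runs the same count differently. It observes $\OO_{X,Q}=\F_q+\mm_{X,Q}\subseteq\F_q+\mm_{\tX,P}\subsetneq\OO_{\tX,P}$ and concludes from $\delta_Q=1$. This inclusion, which you called non-obvious, is immediate from the same facts you use.

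The conductor detour is unnecessary. Its ``direct'' variant is also incomplete as written: nothing there shows $\dim_{\F_q}\OO_{X,Q}/\mathfrak c\leq 1$, and that is just the conductor inequality again. Since you discard it, this does not affect the proof.
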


\begin{proof}
Note first that the rationality of $Q$, i.e., $\F_q(Q)=\F_q$, is equivalent to the equality
\[
\OO_{X,Q}=\F_q+\mm_{X,Q},
\]
from which it readily follows that $\OO_{X,Q}\subseteq\F_q+\mm_{\tX,P}$.
Likewise, it is clear that in the chain of $\F_q$-vector spaces 
\[ \OO_{X,Q}\subseteq\F_q+\mm_{\tX,P}\subseteq \OO_{\tX,P} \]
the latter inclusion is proper, because the point $P$ has degree $2$ and therefore is not rational.
Thus the proof will be complete once we check that the $\F_q$-vector space $\OO_{\tX,P}/\OO_{X,Q}$ has dimension $1$.
But this holds true because the singularity degree $\delta_Q=\dim_{\F_q}\tO_Q\big/\OO_{X,Q}$ is equal to $1$ and $X$ is unibranch at $Q$, so that $\tO_Q=\OO_{\tX,P}$.
\end{proof}



\subsection{Rational unibranch singular points and value semigroups}\label{subsec:semigroups}

Besides non-split singularities, some of the curves constructed in Section~\ref{sec:construction} exhibit singular points of a different nature: rational unibranch points whose branch is again rational. Their basic invariant is a numerical semigroup.

\begin{definition}\label{def:semigroup}
Let $Q\in X$ be a unibranch singular point with branch $P\in\tX$, and suppose $\deg Q=\deg P=1$. Then $\tO_Q=\OO_{\tX,P}$ is a discrete valuation ring with valuation $v:=v_P$ and residue field $\F_q$. The \emph{value semigroup} of $Q$ is
\[
S(Q):=v\bigl(\OO_{X,Q}\setminus\{0\}\bigr)\subseteq\mathbb{N} .
\]
It contains $0$, it is closed under addition, and its complement in $\mathbb{N}$ is finite, because $\OO_{X,Q}$ contains the conductor $(\OO_{X,Q}:\tO_Q)$, which is a nonzero ideal of $\tO_Q$. The point $Q$ is called an \emph{ordinary cusp} when $S(Q)=\langle 2,3\rangle$.
\end{definition}

\begin{lemma}\label{lem:gaps}
With the above notation, let $D_Q\subseteq\F_q(X)$ be a fractional ideal of $\OO_{X,Q}$ with $D_Q\supseteq\OO_{X,Q}$, and set $S(D_Q):=v\bigl(D_Q\setminus\{0\}\bigr)$. Then
\[
\dim_{\F_q}D_Q\big/\OO_{X,Q}=\#\bigl(S(D_Q)\setminus S(Q)\bigr).
\]
In particular $\delta_Q=\#\bigl(\mathbb{N}\setminus S(Q)\bigr)$ is the number of gaps of $S(Q)$ and, for $f\in\F_q(X)^{\times}$ with $n:=\max\bigl(0,-v(f)\bigr)$ one has
\[
n-\delta_Q\;\leq\;\dim_{\F_q}\bigl(\OO_{X,Q}+f\OO_{X,Q}\bigr)\big/\OO_{X,Q}\;\leq\;n+\delta_Q .
\]
\end{lemma}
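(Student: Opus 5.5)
The plan is to prove the dimension formula by the standard valuation filtration argument, using only that the residue field of the branch $P$ is $\F_q$. First observe that $D_Q$ is a finitely generated $\OO_{X,Q}$-module inside $\F_q(X)$, so there is an integer $c$ (for instance, a conductor element times a common denominator) with $t^{N}\tO_Q\subseteq\OO_{X,Q}$ for $N\ge c$, where $t$ is a uniformizer at $P$, and $v(D_Q\setminus\{0\})$ is bounded below. Hence $D_Q/\OO_{X,Q}$ is finite dimensional and the set $S(D_Q)\setminus S(Q)$ is finite. For each integer $n$ consider $D_Q^{(n)}:=\{h\in D_Q: v(h)\ge n\}$ and similarly $\OO^{(n)}:=\{h\in\OO_{X,Q}:v(h)\ge n\}$. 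The key one-step fact is
\[
\dim_{\F_q} D_Q^{(n)}/D_Q^{(n+1)}=\begin{cases}1,& n\in S(D_Q),\\ 0,&\text{otherwise,}\end{cases}
\]
and likewise for $\OO_{X,Q}$ with $S(Q)$. This holds because the map $h\mapsto$ (leading coefficient of $h$ in $t^n$) lands in the residue field $\F_q$ of $P$, is $\F_q$-linear with kernel $D_Q^{(n+1)}$, and is nonzero exactly when $n$ is a value.

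Then I would compare the two filtrations: for $n$ very negative $D_Q^{(n)}=D_Q$, $\OO^{(n)}=\OO_{X,Q}$, and for $n\ge c$ both contain $t^{n}\tO_Q$ and in fact $D_Q^{(n)}=\OO^{(n)}$ once $n$ exceeds the conductor bound (both equal $t^n\tO_Q\cap D_Q$ which lies in $\OO_{X,Q}$ using $\OO_{X,Q}\subseteq D_Q$). Summing the graded pieces, $\dim D_Q/\OO_{X,Q}=\sum_n(\dim \mathrm{gr}_n D_Q-\dim\mathrm{gr}_n\OO_{X,Q})=\#S(D_Q)-\#S(Q)$ counted in a finite window, which equals $\#(S(D_Q)\setminus S(Q))$ since $S(Q)\subseteq S(D_Q)$. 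The only care needed is the identification $\OO^{(n)}=\OO_{X,Q}\cap D_Q^{(n)}$, so that $\OO_{X,Q}/\OO^{(n)}\hookrightarrow D_Q/D_Q^{(n)}$ with compatible gradings; I expect this bookkeeping to be the main (mild) obstacle.

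The "in particular" statements follow by specialization. Taking $D_Q=\tO_Q=\OO_{\tX,P}$ gives $S(D_Q)=\mathbb{N}$, so $\delta_Q$ is the number of gaps. For $f$ with $n=\max(0,-v(f))$, set $D_Q=\OO_{X,Q}+f\OO_{X,Q}$. Its value set contains $S(Q)\cup(v(f)+S(Q))$ and is contained in $\{m\ge -n\}$ (all values of elements of $\OO_{X,Q}+f\OO_{X,Q}$ are $\ge\min(0,v(f))$). The upper bound: $S(D_Q)\setminus S(Q)$ lies in $\{-n,\dots,-1\}\cup(\mathbb{N}\setminus S(Q))$, giving at most $n+\delta_Q$. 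The lower bound: $S(D_Q)$ contains $v(f)+S(Q)$, whose intersection with $\{-n,\dots,-1\}$ has at least $n-\delta_Q$ elements (only values $v(f)+s$ with $s$ a gap are missed, at most $\delta_Q$ of them), and none of these negative values lie in $S(Q)$. When $n=0$ the lower bound is trivial.
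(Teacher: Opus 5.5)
Your proposal is correct and follows essentially the paper's route: a filtration by valuation whose successive quotients are at most one-dimensional because the residue field of $P$ is $\F_q$, followed by the same specializations $D_Q=\tO_Q$ and $D_Q=\OO_{X,Q}+f\OO_{X,Q}$ for the consequences. The only difference is bookkeeping. You filter $D_Q$ and $\OO_{X,Q}$ separately and subtract the two counts over a finite window; this needs only $D_Q^{(c)}=\OO^{(c)}$ for $c$ beyond the conductor, not the compatibility you flagged as the main obstacle. The paper instead uses the single relative filtration $F_m=\OO_{X,Q}+\bigl(D_Q\cap\tO_{\geq m}\bigr)$, whose steps directly detect $S(D_Q)\setminus S(Q)$.
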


\begin{proof}
Write $\OO:=\OO_{X,Q}$, and for each $m\in\mathbb{Z}$ let $\tO_{\geq m}:=\{h\in\F_q(X)\,:\,v(h)\geq m\}$. Each quotient $\tO_{\geq m}/\tO_{\geq m+1}$ is one-dimensional over the residue field $\F_q$. Choose $m_0$ with $D_Q\subseteq\tO_{\geq m_0}$ and $c\geq 0$ with $\tO_{\geq c}\subseteq\OO$, and consider the filtration
\[
D_Q=F_{m_0}\supseteq F_{m_0+1}\supseteq\dots\supseteq F_c=\OO,
\qquad
F_m:=\OO+\bigl(D_Q\cap\tO_{\geq m}\bigr).
\]
Fix $m$. If $m\in S(D_Q)\setminus S(Q)$, pick $d\in D_Q$ with $v(d)=m$; its class generates $F_m/F_{m+1}$: indeed any $d'\in D_Q$ with $v(d')=m$ is congruent to a scalar multiple of $d$ modulo $D_Q\cap\tO_{\geq m+1}$, since the residue field is $\F_q$, and the class of $d$ is nonzero, for an identity $d=o+d'$ with $o\in\OO$ and $d'\in D_Q\cap\tO_{\geq m+1}$ would force $v(o)=m\in S(Q)$. If instead $m\in S(Q)$, then every $d\in D_Q$ with $v(d)=m$ is congruent modulo $\tO_{\geq m+1}$ to an element of $\OO$ with valuation $m$, so $F_m=F_{m+1}$; and the same conclusion holds trivially when $m\notin S(D_Q)$. Therefore $\dim_{\F_q}F_m/F_{m+1}$ equals $1$ if $m\in S(D_Q)\setminus S(Q)$ and $0$ otherwise, and the first assertion follows by summing over $m$. Taking $D_Q=\tO_Q$, whose value set is $\mathbb{N}$, gives $\delta_Q=\#(\mathbb{N}\setminus S(Q))$. Finally, for $D_Q=\OO+f\OO$: on the one hand $D_Q\subseteq\tO_{\geq -n}$, so $S(D_Q)\setminus S(Q)\subseteq\mathbb{Z}_{\geq-n}\setminus S(Q)$, a set of cardinality $n+\delta_Q$; on the other hand $S(D_Q)\supseteq v(f)+S(Q)$, and already the negative part $\bigl(v(f)+S(Q)\bigr)\cap\mathbb{Z}_{<0}$, which has cardinality $\#\bigl(S(Q)\cap[0,n)\bigr)\geq n-\delta_Q$, is disjoint from $S(Q)$.
\end{proof}

\subsection{Points of degree two on elliptic and hyperelliptic curves}\label{subsec:degtwo}

The curves constructed in Section~\ref{sec:construction} have elliptic or hyperelliptic normalization, and their non-split singularities lie under closed points of degree two of the normalization. In this subsection we fix, once and for all, the notation concerning such points. From now on we assume $q$ is odd.

Let $\tC$ be a geometrically irreducible and smooth curve over $\F_q$ of genus $g\geq 1$ which is elliptic or hyperelliptic. We assume throughout that $\tC$ admits a plane affine model
\begin{equation}\label{eq:model}
\tC\colon\ y^2=f(x),\qquad f\in\F_q[x]\ \text{squarefree},\quad \deg f=2g+1 ,
\end{equation}
so that the closed points of $\tC$ correspond to the places of the function field $K:=\F_q(x,y)$ over $\F_q$, which is a separable extension of degree two of the rational function field $\F_q(x)$. This assumption can be phrased purely in terms of the function field $K$. The ramified places of $K|\F_q(x)$ are the zeros of $f$, together with the infinite place of $\F_q(x)$ exactly when $\deg f$ is odd; thus a model \eqref{eq:model} with $\deg f$ odd exists if and only if some place of degree one of $\F_q(x)$ ramifies in $K$, since such a place can be taken to the infinite place by a fractional linear change of the generator $x$, which does not alter the subfield $\F_q(x)$. For $g\geq 2$ the subfield $\F_q(x)$ is moreover intrinsic to $K$: it is its unique rational subfield of index two (see \cite[Chapter~VI.2]{Sti09}). For $g=1$, the existence of a model \eqref{eq:model} amounts to the existence of a place of degree one of $K$. All the examples treated in this paper are of this form.

Since $\deg f$ is odd, the infinite place of $\F_q(x)$ is totally ramified in $K$, so there is a unique place $P_\infty$ of $K$ lying over it; it has degree one, and
\begin{equation}\label{eq:valinfty}
v_{P_\infty}(x)=-2,\qquad v_{P_\infty}(y)=-(2g+1).
\end{equation}
Setting $\xi:=1/x$ and $\gamma:=y\,x^{-(g+1)}$ we get $v_{P_\infty}(\xi)=2$ and
\[
v_{P_\infty}(\gamma)=v_{P_\infty}(y)+2(g+1)=1 ,
\]
so that $\gamma$ is a local parameter at $P_\infty$. These functions will be used in Section~\ref{sec:construction} to analyse the behaviour of our construction at infinity.

We now fix the notation we will employ throughout Sections~\ref{sec:construction} and~\ref{sec:gonality}. Let $P\in\tC$ be a closed point of degree two whose restriction to the subfield $\F_q(x)$ is a place of degree two; equivalently, the residue $\alpha:=x(P)$ of the function $x$ in the residue field of $P$ satisfies
\[
\alpha\in\F_{q^2}\setminus\F_q .
\]
(Closed points of degree two with \emph{rational} $x$-coordinate also exist --- they have $\beta\in\F_{q^2}\setminus\F_q$ with $\beta^2=f(\alpha)$ a non-square in $\F_q$ --- but they will play no role in this paper.) By \cite[Lemma~5.1.9]{Sti09}, applied with $r=m=2$, the closed point $P$ splits on the base change $\tC\otimes_{\F_q}\F_{q^2}$ into two distinct rational points, interchanged by the Frobenius automorphism, namely
\[
P_1=(\alpha,\beta)\qquad\text{and}\qquad P_2=(\alpha^q,\beta^q),
\]
where $\beta\in\F_{q^2}$ satisfies $\beta^2=f(\alpha)$; they are interchanged by the Frobenius. Both possibilities
\[
\beta\in\F_q\qquad\text{and}\qquad\beta\in\F_{q^2}\setminus\F_q
\]
occur, and they will lead to genuinely different geometries in Sections~\ref{sec:construction} and~\ref{sec:gonality}; note that $\beta\in\F_q$ if and only if $f(\alpha)$ belongs to $\F_q$ and is a square in $\F_q^{\times}$ (or $f(\alpha)=0$).

We denote by $P^-\in\tC$ the image of $P$ under the nontrivial automorphism of $K$ over $\F_q(x)$, which maps $y$ to $-y$; it is again a closed point of degree two with $x(P^-)=\alpha$, and the two rational points of $\tC\otimes\F_{q^2}$ lying over $P^-$ are
\[
P_1^-=(\alpha,-\beta)\qquad\text{and}\qquad P_2^-=(\alpha^q,-\beta^q).
\]
If $f(\alpha)\neq 0$, then $\beta\neq 0$ and hence $P\neq P^-$.

In the next section, starting from the data $(\tC,P)$ with $f(\alpha)\neq 0$, we will construct an explicit curve $C\subset\PP^3$ over $\F_q$, which is complete and geometrically irreducible, such that $\tC$ is the normalization of $C$ and $P$ is the unique branch over a non-split singularity $Q\in C$; when $\beta\in\F_q$, the point $P^-$ will likewise be the unique branch over a second non-split singularity $Q^-\in C$.

\section{Maximal curves with elliptic or hyperelliptic normalization}\label{sec:construction}

Throughout this section we keep the notation of Subsection~\ref{subsec:degtwo}: $\tC\colon y^2=f(x)$ is a smooth curve over $\F_q$, $q$ odd, of genus $g\geq 1$, with $f\in\F_q[x]$ squarefree of degree
\[
d:=\deg f=2g+1 ,
\]
and $P\in\tC$ is a closed point of degree two with $\alpha:=x(P)\in\F_{q^2}\setminus\F_q$ and $f(\alpha)\neq 0$, lying under the rational points $P_1=(\alpha,\beta)$ and $P_2=(\alpha^q,\beta^q)$ of $\tC\otimes\F_{q^2}$. Starting from the pair $(\tC,P)$ we construct an explicit curve $C\subset\PP^3$ over $\F_q$ whose normalization is $\tC$ and which has non-split nodes lying under $P$ --- and under $P^-$ when $\beta\in\F_q$; for $2\leq g\leq 3$ these are its only singularities, while for $g=1$ and $g\geq 4$ a further singular point appears at the image of $P_\infty$.

\subsection{The interpolation data}\label{subsec:interpolation}

The minimal polynomial of $\alpha$ over $\F_q$ is given by
\begin{equation}\label{eq:phi}
\varphi(x):=(x-\alpha)(x-\alpha^q)=x^2-\sigma x+\eta,
\qquad
\sigma:=\alpha+\alpha^q,\quad \eta:=\alpha^{1+q}.
\end{equation}
In particular its discriminant $\sigma^2-4\eta$ is a non-square in $\F_q^{\times}$, and hence it is nonzero, since $q$ is odd.

\begin{lemma}\label{lem:lambda}
There is a unique polynomial $\lambda(x)=\ell_1x+\ell_0$ of degree at most one with
\[
\lambda(\alpha)=\beta
\qquad\text{and}\qquad
\lambda(\alpha^q)=\beta^q ,
\]
whose coefficients $\ell_1,\ell_0$ lie in $\F_q$. Moreover, $\beta\in\F_q$ if and only if $\lambda$ is the constant polynomial $\beta$.
\end{lemma}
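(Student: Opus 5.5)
We will prove Lemma~\ref{lem:lambda} by Lagrange interpolation at the two distinct nodes $\alpha\neq\alpha^q$, which are distinct because $\alpha\notin\F_q$. Existence and uniqueness over $\F_{q^2}$ are immediate: the system $\ell_1\alpha+\ell_0=\beta$, $\ell_1\alpha^q+\ell_0=\beta^q$ has Vandermonde determinant $\alpha-\alpha^q\neq 0$. Its solution is
\[
\ell_1=\frac{\beta-\beta^q}{\alpha-\alpha^q},\qquad \ell_0=\frac{\alpha\beta^q-\alpha^q\beta}{\alpha-\alpha^q}.
\]

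For rationality we apply the Frobenius $z\mapsto z^q$, which is an involution on $\F_{q^2}$. It sends $\alpha\leftrightarrow\alpha^q$ and $\beta\leftrightarrow\beta^q$, so it negates both the numerator and the denominator of $\ell_1$ and of $\ell_0$. Hence $\ell_1^q=\ell_1$ and $\ell_0^q=\ell_0$, so both coefficients lie in $\F_q$.

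An alternative route is this: the polynomial $\lambda^{(q)}$, obtained by raising the coefficients to the $q$-th power, satisfies the same interpolation conditions. Indeed, $\lambda^{(q)}(\alpha^q)=\lambda(\alpha)^q=\beta^q$ and $\lambda^{(q)}(\alpha)=\lambda(\alpha^q)^q=\beta^{q^2}=\beta$. Uniqueness then gives $\lambda^{(q)}=\lambda$. We would present this second argument, since it avoids computation.

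For the final claim, suppose $\beta\in\F_q$. Then $\beta^q=\beta$, and the constant polynomial $\beta$ satisfies both conditions, so by uniqueness $\lambda=\beta$. Conversely, if $\lambda$ is the constant polynomial $\beta$, then its coefficients lie in $\F_q$ by the first part, so $\beta\in\F_q$. Equivalently, $\ell_1=0$ holds iff $\beta=\beta^q$.

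No step is a genuine obstacle. The only point requiring care is that $\alpha\neq\alpha^q$, and this is exactly the hypothesis $\alpha\in\F_{q^2}\setminus\F_q$.
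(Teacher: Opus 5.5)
Your proof is correct and takes essentially the same approach as the paper: Lagrange interpolation at the distinct nodes $\alpha\neq\alpha^q$, followed by Frobenius invariance of the coefficients, since raising to the $q$-th power permutes the two interpolation conditions (your uniqueness argument $\lambda^{(q)}=\lambda$ is exactly this). Your treatment of the final equivalence, applying uniqueness to the constant polynomial $\beta$, is only a rephrasing of the paper's observation that $\ell_1=0$ if and only if $\beta^q=\beta$.
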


\begin{proof}
Since $\alpha\neq\alpha^q$, Lagrange interpolation over $\F_{q^2}$ gives existence and uniqueness, with
\[
\ell_1=\frac{\beta^q-\beta}{\alpha^q-\alpha},
\qquad
\ell_0=\beta-\ell_1\alpha .
\]
As $\alpha^{q^2}=\alpha$ and $\beta^{q^2}=\beta$, raising to the $q$-th power permutes the interpolation conditions, so $\ell_1^q=\ell_1$ and $\ell_0^q=\ell_0$, that is, $\ell_1,\ell_0\in\F_q$. Finally, $\ell_1=0$ if and only if $\beta^q=\beta$, in which case $\lambda=\ell_0=\beta$.
\end{proof}

\begin{lemma}\label{lem:h}
The polynomial $\varphi$ divides $f-\lambda^2$ in $\F_q[x]$. The quotient
\begin{equation}\label{eq:h}
h(x):=\frac{f(x)-\lambda(x)^2}{\varphi(x)}\in\F_q[x]
\end{equation}
has degree $d-2=2g-1$ and the same leading coefficient as $f$.
\end{lemma}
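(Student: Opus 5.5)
The plan is to show that $\alpha$ and $\alpha^q$ are roots of $f-\lambda^2$, deduce divisibility first over $\F_{q^2}$ and then over $\F_q$, and finally read off the degree and leading coefficient of the quotient.

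First, $f-\lambda^2$ lies in $\F_q[x]$, since $f\in\F_q[x]$ by assumption and $\ell_0,\ell_1\in\F_q$ by Lemma~\ref{lem:lambda}. Evaluating at $\alpha$ gives
\[
f(\alpha)-\lambda(\alpha)^2=f(\alpha)-\beta^2=0,
\]
because $(\alpha,\beta)$ is a point of $\tC$. At $\alpha^q$, the coefficients of $f$ lie in $\F_q$, so $f(\alpha^q)=f(\alpha)^q=\beta^{2q}$, and $\lambda(\alpha^q)=\beta^q$ by Lemma~\ref{lem:lambda}; hence $\alpha^q$ is also a root.

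Since $\alpha\neq\alpha^q$ (because $\alpha\notin\F_q$), both linear factors $x-\alpha$ and $x-\alpha^q$ divide $f-\lambda^2$ in $\F_{q^2}[x]$, and being coprime, their product $\varphi$ divides it there. Division with remainder by the monic polynomial $\varphi\in\F_q[x]$ already takes place in $\F_q[x]$, and the quotient and remainder are unique. The remainder is therefore zero, and $h\in\F_q[x]$. Alternatively, $\varphi$ is the minimal polynomial of $\alpha$ over $\F_q$, so it is irreducible and divides any polynomial in $\F_q[x]$ vanishing at $\alpha$; this makes the evaluation at $\alpha^q$ unnecessary.

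For the degree, $\deg\lambda^2\le 2<d=2g+1$ because $g\ge1$, so $d\ge3$. Hence $f-\lambda^2$ has degree $d$ and the same leading coefficient as $f$. Dividing by the monic quadratic $\varphi$ gives $\deg h=d-2=2g-1$, and the leading coefficient is unchanged. There is no real obstacle here; the only point needing care is the inequality $d\ge3$, which guarantees that $\lambda^2$ does not affect the top coefficient.
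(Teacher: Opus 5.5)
Your proposal is correct and matches the paper's proof: the paper observes that $f-\lambda^2\in\F_q[x]$ vanishes at $\alpha$, concludes divisibility because $\varphi$ is the minimal polynomial of $\alpha$ (your ``alternative'' argument, so the evaluation at $\alpha^q$ is not needed), and obtains the degree and leading coefficient from $\deg\lambda^2\leq 2<d$ as you do. Your primary route, using both roots and uniqueness of division with remainder, is equally valid.
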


\begin{proof}
We have $(f-\lambda^2)(\alpha)=f(\alpha)-\beta^2=0$, and $f-\lambda^2\in\F_q[x]$; since $\varphi$ is the minimal polynomial of $\alpha$ over $\F_q$, it divides $f-\lambda^2$. As $\deg\lambda^2\leq 2<d$, the leading term of $f-\lambda^2$ is that of $f$, and the degree statement follows.
\end{proof}

Writing $t:=y-\lambda(x)$, the model \eqref{eq:model} translates into the \emph{functional equation}
\begin{equation}\label{eq:functional}
t^2+2\lambda(x)\,t=\varphi(x)\,h(x) ,
\end{equation}
since $t^2+2\lambda t=(y-\lambda)^2+2\lambda(y-\lambda)=y^2-\lambda^2=f-\lambda^2$. Equation \eqref{eq:functional} will be the source of the affine equations of our curves.

\subsection{The projective model}\label{subsec:model}

Consider the embedding
\[
\iota\colon \tC\longrightarrow\PP^{g+2},
\qquad
(x,y)\longmapsto (1:x:\cdots:x^{g+1}:y),
\]
used by St\"ohr in his study of hyperelliptic Gorenstein curves \cite{St99}, and denote by $(X_0:\cdots:X_{g+1}:Y)$ the homogeneous coordinates of $\PP^{g+2}$. The map $\iota$ is an embedding for every $g\geq 1$, the elliptic case included: by \eqref{eq:valinfty} the functions $1,x,\dots,x^{g+1},y$ have pole orders $0,2,\dots,2g+2$ and $2g+1$ at $P_\infty$, so they form a basis of $L\bigl((2g+2)P_\infty\bigr)$, a complete linear system of degree $2g+2\geq 2g+1$, which is very ample. Let $\rho\colon\PP^{g+2}\dashrightarrow\PP^3$ be the linear projection $\rho(X_0:\cdots:X_{g+1}:Y)=(U:W:T:S)$ given by
\[
\begin{gathered}
U=\eta X_0-\sigma X_1+X_2,\qquad
W=\eta X_1-\sigma X_2+X_3,\\
T=-\ell_0X_0-\ell_1X_1+Y,\qquad
S=X_0 ,
\end{gathered}
\]
that is, by the $4\times(g+3)$ matrix
\begin{equation}\label{eq:matrixM}
M=
\begin{pmatrix}
\eta & -\sigma & 1 & 0 & 0 & \cdots & 0 & 0\\
0 & \eta & -\sigma & 1 & 0 & \cdots & 0 & 0\\
-\ell_0 & -\ell_1 & 0 & 0 & 0 & \cdots & 0 & 1\\
1 & 0 & 0 & 0 & 0 & \cdots & 0 & 0
\end{pmatrix},
\end{equation}
whose columns are indexed by $X_0,X_1,\dots,X_{g+1},Y$. The composition $\rho\circ\iota\colon\tC\dashrightarrow\PP^3$ is a priori only a rational map; we denote by
\[
\Phi\colon \tC\longrightarrow \PP^3
\]
its extension to a morphism, whose existence is established in Theorem~\ref{thm:construction}(a) below.
On the affine chart $\{S\neq 0\}\cong\A^3$ of $\PP^3$ we use the coordinates
\[
u:=U/S,\qquad w:=W/S,\qquad t:=T/S .
\]

\begin{theorem}\label{thm:construction}
With the notation above, the following hold.
\begin{enumerate}
\item[(a)] The rational map $\rho\circ\iota$ extends to a morphism $\Phi\colon\tC\to\PP^3$, defined over $\F_q$ and given on the chart $(x,y)$ of $\tC$ by
\[
\Phi(x,y)=\bigl(\varphi(x):x\varphi(x):y-\lambda(x):1\bigr),
\]
with
\[
\Phi(P_\infty)=
\begin{cases}
(0:1:0:0) & \text{if } g\leq 2,\\[2pt]
(0:0:1:0) & \text{if } g\geq 3.
\end{cases}
\]
\item[(b)] $C:=\Phi(\tC)$ is a complete, geometrically irreducible curve over $\F_q$, and $\Phi\colon\tC\to C$ is its normalization.
\item[(c)] If $\beta\notin\F_q$, then $\Sing C=\{Q\}\cup\Sigma_\infty$, where $Q=(0:0:0:1)$ is a non-split node whose branch is $P$ and $\Sigma_\infty$ is the set described in (e). Consequently the arithmetic genus of $C$ is $g+1+\delta_\infty$, where $\delta_\infty:=\sum_{Q'\in\Sigma_\infty}\delta_{Q'}$, and
\[
\#C(\F_q)=\#\tC(\F_q)+1 .
\]
\item[(d)] If $\beta\in\F_q$, then $\Sing C=\{Q,Q^-\}\cup\Sigma_\infty$, where $Q=(0:0:0:1)$ and $Q^-=(0:0:-2\beta:1)$ are non-split nodes whose branches are $P$ and $P^-$, respectively, and $\Sigma_\infty$ is as in (e). Consequently the arithmetic genus of $C$ is $g+2+\delta_\infty$ and
\[
\#C(\F_q)=\#\tC(\F_q)+2 .
\]
\item[(e)] $\Sigma_\infty=\emptyset$ if $2\leq g\leq 3$. If $g=1$ or $g\geq 4$, then $\Sigma_\infty=\{Q_\infty\}$, where $Q_\infty:=\Phi(P_\infty)$ is a rational unibranch singular point of $C$ whose branch is $P_\infty$: for $g=1$ its value semigroup is $S(Q_\infty)=\langle 2,3\rangle$ --- an ordinary cusp, with $\delta_{Q_\infty}=1$ --- while for $g\geq 4$ its value semigroup contains $2g-5$, $2g-3$ and $2g+1$, and
\[
2g-6\;\leq\;\delta_{Q_\infty}\;\leq\;\#\bigl(\mathbb{N}\setminus\langle 2g-5,\,2g-3,\,2g+1\rangle\bigr).
\]
\item[(f)] If $2\leq g\leq 3$ and $\tC$ is maximal over $\F_q$, then $C$ attains the Aubry--Perret bound \eqref{eq:AP} over $\F_q$. For $g=1$ and for $g\geq 4$ the bound is never attained: when $\tC$ is maximal over $\F_q$,
\[
\#C(\F_q)=q+1+g\lfloor 2\sqrt{q}\rfloor+(\pi-g)-\delta_{Q_\infty} .
\]
\end{enumerate}
\end{theorem}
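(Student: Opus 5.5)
The plan is to work with the explicit formula for $\Phi$ and to analyse the curve point by point: first the affine part, where $\varphi(x)$ controls everything, and then the single point at infinity. For (a), note that on the chart $(x,y)$ we have $\rho\circ\iota(x,y)=(\eta-\sigma x+x^2:\ \eta x-\sigma x^2+x^3:\ y-\ell_0-\ell_1x:\ 1)$, which is exactly $(\varphi(x):x\varphi(x):y-\lambda(x):1)$. This is defined everywhere on the affine part since $S=1$. At $P_\infty$ we use \eqref{eq:valinfty}: $v_{P_\infty}(u)=-4$, $v_{P_\infty}(w)=-6$ and $v_{P_\infty}(t)=-(2g+1)$. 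Dividing by the coordinate of largest pole order gives the morphism near $P_\infty$. For $g\leq 2$ this is $w$, since $2g+1<6$, so $\Phi(P_\infty)=(0:1:0:0)$. For $g\geq 3$ it is $t$, since $2g+1>6$, so $\Phi(P_\infty)=(0:0:1:0)$. Rationality over $\F_q$ holds because $\sigma,\eta,\ell_0,\ell_1\in\F_q$ (Lemma~\ref{lem:lambda}). For (b), the image is a complete irreducible curve, geometrically so because $\tC$ is. The map is birational because $x=w/u$ and $y=t+\lambda(w/u)$ lie in $\F_q(C)$, hence $\F_q(C)=\F_q(\tC)$. Since $\tC$ is smooth and $\Phi$ is finite and birational, $\Phi$ is the normalization.

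Next come the affine singularities, treated over $\overline{\F}_q$. Where $\varphi(x)\neq 0$, the point $(x,y)$ is recovered from its image via $x=w/u$ and $y=t+\lambda(x)$. So $\Phi$ is injective there. It is also unramified: $x-x_0$ (or $y-y_0$ at zeros of $f$) is a local parameter and is a regular function of $u,w,t$ near the image. No point with $\varphi(x)\neq0$ maps into $\{u=w=0\}$, and no affine point maps to $S=0$. The fibre over $u=w=0$ consists of the four geometric points $P_1,P_2,P_1^-,P_2^-$. Their $t$-values are $0$, $0$, $-2\beta$ and $-2\beta^q$, using $\lambda(\alpha)=\beta$ and $\lambda(\alpha^q)=\beta^q$.

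At each $P_i^{\pm}$, $f(\alpha)\neq0$ makes $x-\alpha$ (resp.\ $x-\alpha^q$) a local parameter, and $u=\varphi(x)$ vanishes to order one there, so each branch is smooth. The tangent vectors at $P_1$ and $P_2$ are proportional to $(1,\alpha,*)$ and $(1,\alpha^q,*)$ respectively, via $du$, $dw$. Since $\alpha\neq\alpha^q$, they are independent. So over $\overline{\F}_q$, $Q=(0:0:0:1)$ is the union of two smooth transversal branches. Projecting to a generic plane shows this is an ordinary node with $\delta=1$, and the $\delta$-invariant is stable under base change. Over $\F_q$ the two branches are conjugate, so $Q$ is rational and unibranch with branch $P$ of degree two: a non-split node.

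If $\beta\notin\F_q$, then $-2\beta\neq-2\beta^q$. In that case $P_1^-$ and $P_2^-$ map to distinct points, each with a single smooth branch, so they are nonsingular. If $\beta\in\F_q$, they meet at $Q^-=(0:0:-2\beta:1)$, and the same tangent computation gives a second non-split node. The point counts in (c) and (d) then follow from \eqref{lem:pointcount}, with $a_Q=0$ at nodes and $a_{Q_\infty}=1$. The genus follows from $\pi=g+\sum\delta$.

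For (e), use the affine chart centred at $\Phi(P_\infty)$; no affine point of $\tC$ maps there. For $g=1,2$ the chart coordinates are $u/w$, $t/w$ and $1/w$, with valuations $2$, $6-(2g+1)$ and $6$. For $g=2$ the value $1$ occurs, so $Q_\infty$ is smooth. For $g=1$ the values are $2,3,6$, so $S(Q_\infty)\supseteq\langle2,3\rangle$. Every element of $\mm_{C,Q_\infty}$ has valuation $\geq2$, so $1$ is a gap, giving $S=\langle2,3\rangle$ and $\delta=1$ by Lemma~\ref{lem:gaps}. For $g\geq3$ the coordinates are $u/t$, $w/t$ and $1/t$, with valuations $2g-3$, $2g-5$ and $2g+1$. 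For $g=3$ the value $2g-5=1$ occurs, so the point is smooth. For $g\geq4$ these values lie in $S(Q_\infty)$, giving the upper bound on $\delta$ via Lemma~\ref{lem:gaps}. For the lower bound, the maximal ideal is generated by these three functions, so all its valuations are $\geq 2g-5$. Hence $1,\dots,2g-6$ are gaps, so $\delta_{Q_\infty}\geq 2g-6$. Moreover $\delta_{Q_\infty}>0$ because $1$ is a gap.

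Part (f) is then bookkeeping. For $2\leq g\leq3$ we have $\pi-g=k$ and $\#C(\F_q)=\#\tC(\F_q)+k$. Otherwise $\pi-g=k+\delta_{Q_\infty}$, while the count gains only $k$, so the bound is missed by $\delta_{Q_\infty}\geq1$.

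I expect the main obstacle to be making the claim $\delta_Q=1$ rigorous in $\PP^3$ rather than the plane. I would do this directly via Lemma~\ref{lem:localchar}'s converse: show $\F_q+\mm_{\tC,P}\subseteq\OO_{C,Q}$. By Lemma~\ref{lem:localchar} it suffices to check that $u$ and $t$ have independent images in $\mm_{\tC,P}/\mm_{\tC,P}^2\cong\F_{q^2}$ over $\F_q$, which is the transversality computation above. Then Nakayama gives $\mm_{\tC,P}=\mm_{C,Q}\OO_{\tC,P}$, and conductor considerations yield equality, hence $\delta_Q=1$.
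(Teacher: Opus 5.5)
Apart from one step, your outline is correct and runs parallel to the paper's proof: the fibre over $\{u=w=0\}$, the tangent directions $\alpha,\alpha^q$ at $Q$, the valuations of the chart coordinates at $P_\infty$, and the bookkeeping with \eqref{lem:pointcount}. In places your mechanics are leaner, and they are fine:
pole orders instead of the $\xi,\gamma$ expansions; birationality from $x=w/u$, $y=t+\lambda(w/u)$ instead of an explicit inverse; and unramifiedness via a local parameter, or a chart coordinate of valuation $1$, lying in the local ring of the image, instead of differentials plus Shafarevich's lemma.

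The step that does not work as written is $\delta_Q=1$, the one you flagged yourself. The generic-projection sentence is only a heuristic, and your fallback has two defects.

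\emph{The pair $u,t$ is wrong.} Identify $\mm_P/\mm_P^2$ with $\F_{q^2}$ via $c\mapsto cu$. Then the class of $t$ is the residue of $t/u=h/(t+2\lambda)$ at $P_1$, namely $h(\alpha)/(2\beta)$. This is $0$ when $\varphi\mid h$, and it lies in $\F_q$ whenever $h(\alpha)/\beta\in\F_q$, so $u,t$ need not be independent over $\F_q$. The transversality computation you cite concerns $u$ and $w$, whose classes are $1$ and $\alpha$.

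\emph{The Nakayama step does not bound $\delta$, even with $w$.} The conclusion $\mm_P=\mm_{C,Q}\OO_{\tC,P}$ only says that the classes span $\mm_P/\mm_P^2$ over $\F_{q^2}$. For example, the ring $\F_q+\F_q u+\mm_P^2$ satisfies it but has $\delta=2$. Nakayama suffices at a rational branch, as at $Q_\infty$, but not when the residue field jumps from $\F_q$ to $\F_{q^2}$.

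What the conductor argument actually needs is that elements of $\OO_{C,Q}$ span $\mm_P^n/\mm_P^{n+1}$ over $\F_q$ for every $n\geq 1$. This holds because $u^n$ and $wu^{n-1}=xu^n$ lie in $\OO_{C,Q}$ and have classes $1$ and $\alpha$. Then, starting from $\mm_P^c\subseteq\OO_{C,Q}$ for the conductor exponent $c$, descending induction gives $\mm_P\subseteq\OO_{C,Q}$. Hence $\OO_{C,Q}=\F_q+\mm_P$ and $\delta_Q=1$. This is exactly Step~7 of the paper, and it applies verbatim at $Q^-$ when $\beta\in\F_q$.
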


\begin{proof}
We write $\pi$ for the arithmetic genus of $C$ and proceed in eight steps.

\emph{Step 1: the affine expression and two relations.}
Composing $\iota$ with the linear forms defining $\rho$ and dividing by $X_0$, we obtain, on the affine chart of $\tC$,
\[
u=x^2-\sigma x+\eta=\varphi(x),
\qquad
w=x^3-\sigma x^2+\eta x=x\varphi(x),
\qquad
t=y-\lambda(x) ,
\]
which proves the displayed formula in (a); since the last coordinate equals $1$, the restriction of $\Phi$ to the affine chart of $\tC$ is a morphism with image in $\{S\neq 0\}$. The three coordinate functions satisfy two relations. First, from $w=xu$ and $u=\varphi(x)$,
\[
w^2-\sigma uw+\eta u^2=u^2\bigl(x^2-\sigma x+\eta\bigr)=u^2\varphi(x)=u^3 .
\]
Second, substituting $x=w/u$ in the functional equation \eqref{eq:functional} and multiplying by $u^{d-3}$ (respectively by $u$ when $d=3$) and using $\lambda(w/u)u=\ell_1w+\ell_0u$ together with Lemma~\ref{lem:h}, we obtain the two relations
\begin{equation}\label{eq:I}
w^2-\sigma uw+\eta u^2=u^3
\tag{I}
\end{equation}
and, writing $H(W,U):=U^{\,d-2}h(W/U)\in\F_q[W,U]$ for the homogenization of $h$ in degree $d-2$,
\begin{equation}\label{eq:II}
u^{\,d-3}t^2+2\ell_0u^{\,d-3}t+2\ell_1wu^{\,d-4}t=H(w,u)
\tag{II}
\end{equation}
for $d\geq 5$, which for $g=1$, i.e.\ $d=3$ and $h(x)=h_1x+h_0$, reads $ut^2+2\ell_0ut+2\ell_1wt=h_1uw+h_0u^2$. These identities hold on $\tC$, and hence on $C$.

\emph{Step 2: extension to $P_\infty$.}
Multiplying the homogeneous coordinate vector of $\Phi$ by $\xi^{\,g+1}$, where $\xi=1/x$, and using $x^i\xi^{\,g+1}=\xi^{\,g+1-i}$ together with $\gamma=y\,\xi^{\,g+1}$, we get
\begin{equation}\label{eq:tupleinfty}
\begin{gathered}
(U:W:T:S)=\bigl(\xi^{\,g-1}A(\xi):\xi^{\,g-2}A(\xi):\gamma-\ell_1\xi^{\,g}-\ell_0\xi^{\,g+1}:\xi^{\,g+1}\bigr),\\
A(\xi):=1-\sigma\xi+\eta\xi^2 ,
\end{gathered}
\end{equation}
so that $A(\xi)=\xi^2\varphi(x)$ and $A$ is a unit at $P_\infty$, with value $A(0)=1$. By \eqref{eq:valinfty} we have $v_{P_\infty}(\xi)=2$ and $v_{P_\infty}(\gamma)=1$.

\emph{Case $g=1$.} Multiplying \eqref{eq:tupleinfty} once more by $\xi$ we obtain
\[
(U:W:T:S)=\bigl(\xi A(\xi):A(\xi):\xi\gamma-\ell_1\xi^{2}-\ell_0\xi^{3}:\xi^{3}\bigr),
\]
in which all four coordinates lie in $\OO_{P_\infty}$ and the second one is a unit. Hence $\Phi$ extends to a morphism at $P_\infty$, with $\Phi(P_\infty)=(0:1:0:0)$.

\emph{Case $g=2$.} Here \eqref{eq:tupleinfty} reads
\[
(U:W:T:S)=\bigl(\xi A(\xi):A(\xi):\gamma-\ell_1\xi^{2}-\ell_0\xi^{3}:\xi^{3}\bigr),
\]
all coordinates are already in $\OO_{P_\infty}$, the second one is a unit, and again $\Phi(P_\infty)=(0:1:0:0)$.

\emph{Case $g\geq 3$.} Since $v_{P_\infty}(\gamma)=1$, the maximal ideal of $\OO_{\tC,P_\infty}$ is $\mm_{\tC,P_\infty}=\gamma\,\OO_{\tC,P_\infty}$, and $v_{P_\infty}(\xi)=2$ gives
\[
\xi=\gamma^2B
\qquad\text{with $B\in\OO_{\tC,P_\infty}$ a unit.}
\]
Substituting in \eqref{eq:tupleinfty} we obtain
\begin{align*}
U&=\xi^{\,g-1}A(\xi)=\gamma^{2(g-1)}B^{\,g-1}A(\xi), &
T&=\gamma-\ell_1\gamma^{2g}B^{\,g}-\ell_0\gamma^{2(g+1)}B^{\,g+1},\\
W&=\xi^{\,g-2}A(\xi)=\gamma^{2(g-2)}B^{\,g-2}A(\xi), &
S&=\xi^{\,g+1}=\gamma^{2(g+1)}B^{\,g+1},
\end{align*}
whence
\[
\begin{gathered}
v_{P_\infty}(U)=2(g-1)>2,\qquad
v_{P_\infty}(W)=2(g-2)\geq 2,\\
v_{P_\infty}(T)=1,\qquad
v_{P_\infty}(S)=2(g+1)>2 .
\end{gathered}
\]
Therefore
\[
(U:W:T:S)=\bigl(\gamma^{-1}U:\gamma^{-1}W:\gamma^{-1}T:\gamma^{-1}S\bigr),
\]
where all four coordinates on the right-hand side lie in $\OO_{\tC,P_\infty}$ and the third one is a unit. Hence $\Phi$ extends to a morphism at $P_\infty$, with $\Phi(P_\infty)=(0:0:1:0)$.

Since the affine chart of $\tC$ together with $P_\infty$ covers $\tC$, statement (a) follows; note that $\Phi$ is defined over $\F_q$ because so are $\iota$ and $M$.

\emph{Step 3: the image curve.}
As $\tC$ is complete and geometrically irreducible and $\Phi$ is a non-constant morphism defined over $\F_q$ (it is non-constant because $u/s=\varphi(x)$ is a non-constant function) its image $C=\Phi(\tC)$ is a complete, geometrically irreducible curve over $\F_q$.

\emph{Step 4: an isomorphism onto an open subset.}
Let $U_{\tC}\subset\tC$ be the open complement of the closed points $P$, $P^-$ and $P_\infty$; over $\F_{q^2}$ it is the complement of $\{P_1,P_2,P_1^-,P_2^-,P_\infty\}$. Let
\[
U_C:=C\cap\{S\neq 0\}\cap\{U\neq 0\} .
\]
The zeros of $u=\varphi(x)$ on the affine chart of $\tC$ are exactly the points lying over $P$ and $P^-$, while $P_\infty$ is the only point of $\tC$ mapped into $\{S=0\}$ by Step~2; hence $\Phi^{-1}(U_C)=U_{\tC}$ and the restriction
\[
\psi_1\colon U_{\tC}\longrightarrow U_C,
\qquad
(x,y)\longmapsto\bigl(\varphi(x),\,x\varphi(x),\,y-\lambda(x)\bigr),
\]
of $\Phi$ is a surjective morphism. In the opposite direction, define
\[
\psi_2\colon U_C\longrightarrow U_{\tC},
\qquad
(u,w,t)\longmapsto\Bigl(\frac{w}{u},\;t+\lambda\Bigl(\frac{w}{u}\Bigr)\Bigr).
\]
This is well defined: on $U_C$ we have $u\neq 0$, and dividing the relation $w^2-\sigma uw+\eta u^2=u^3$ of Step~1 by $u^2$ yields
\[
\varphi\Bigl(\frac{w}{u}\Bigr)=u ;
\]
setting $x:=w/u$ and dividing the second relation of Step~1 by $u^{d-3}$ (respectively by $u$ when $d=3$) we recover the functional equation $t^2+2\lambda(x)t=u\,h(x)=\varphi(x)h(x)$, whence, by \eqref{eq:functional},
\[
\bigl(t+\lambda(x)\bigr)^2=\varphi(x)h(x)+\lambda(x)^2=f(x) ,
\]
so that $\bigl(x,\,t+\lambda(x)\bigr)$ is an affine point of $\tC$ with $\varphi(x)=u\neq 0$, i.e., a point of $U_{\tC}$. Finally,
\[
\psi_2\circ\psi_1=\mathrm{id}_{U_{\tC}}
\qquad\text{and}\qquad
\psi_1\circ\psi_2=\mathrm{id}_{U_C},
\]
the first identity because $w/u=x$ and $t+\lambda(x)=y$, and the second because $\varphi(w/u)=u$ and $(w/u)\varphi(w/u)=(w/u)u=w$. Hence $\psi_1$ is an isomorphism, and in particular $\Phi$ is birational onto $C$.

\emph{Step 5: the differential of $\Phi$ on the affine part and, for $2\leq g\leq 3$, at $P_\infty$.}
We check that $d\Phi$ is nonzero at every affine point of $\tC\otimes\overline{\F}_q$ and, when $2\leq g\leq 3$, also at $P_\infty$; the behaviour at $P_\infty$ for the remaining genera is analysed in Step~8. Since $q$ is odd, $x$ is a separating variable of $K$.

At an affine point with $y\neq 0$, the function $x$ is a local parameter and
\[
dU=\varphi'(x)\,dx=(2x-\sigma)\,dx,
\qquad
dW=\bigl(\varphi(x)+x\varphi'(x)\bigr)\,dx=(3x^2-2\sigma x+\eta)\,dx .
\]
If $dU$ vanishes at the point, then $x=\sigma/2$ there, and
\[
dW=\Bigl(\tfrac{3\sigma^2}{4}-\sigma^2+\eta\Bigr)dx=-\tfrac{\sigma^2-4\eta}{4}\,dx\neq 0 ,
\]
because $\sigma^2-4\eta\neq 0$ and $q$ is odd.

At an affine point with $y=0$ we have $f(x)=0$ and $f'(x)\neq 0$, as $f$ is squarefree; differentiating $y^2=f(x)$ gives $dx=\bigl(2y/f'(x)\bigr)dy$, which vanishes at the point, so $y$ is a local parameter and
\[
dT=dy-\ell_1\,dx=dy\neq 0 .
\]

At $P_\infty$ we use the expressions of Step~2, written in terms of the local parameter $\gamma$ by means of $\xi=\gamma^2B$. For $g=2$ we work in the chart $\{W\neq 0\}$: by Step~2,
\[
\frac{T}{W}=\frac{\gamma-\ell_1\gamma^{4}B^{2}-\ell_0\gamma^{6}B^{3}}{A(\xi)} ,
\]
so that
\[
\frac{d(T/W)}{d\gamma}\Big|_{P_\infty}=\frac{1}{A(0)}=1\neq 0 .
\]
For $g=3$ we work in the chart $\{T\neq 0\}$: writing $T=\gamma u_0$, where $u_0:=1-\ell_1\gamma^{2g-1}B^{g}-\ell_0\gamma^{2g+1}B^{g+1}$ is a unit at $P_\infty$ with value $1$, the expressions of Step~2 give
\[
\frac{W}{T}=\gamma^{\,2g-5}\,\frac{B^{\,g-2}A(\xi)}{u_0}=\gamma\,\frac{B\,A(\xi)}{u_0} ,
\qquad
\frac{d(W/T)}{d\gamma}\Big|_{P_\infty}=B(0)\neq 0 .
\]
For $g=1$ and for $g\geq 4$, on the other hand, all three affine coordinates of the relevant chart have valuation at least $2$ at $P_\infty$ --- respectively $2$, $3$ and $6$ in the chart $\{W\neq 0\}$ when $g=1$, and $2g-3$, $2g-5$ and $2g+1$ in the chart $\{T\neq 0\}$ when $g\geq 4$ --- so that $d\Phi$ vanishes at $P_\infty$; see Step~8.

\emph{Step 6: the fibres of $\Phi$ and the smooth locus of $C$.}
Since $f(\alpha)\neq 0$, the four points $P_1,P_2,P_1^-,P_2^-$ are not Weierstrass points of the model \eqref{eq:model}. At these points $u=\varphi(x)$ and $w=x\varphi(x)$ vanish, while
\[
\begin{gathered}
t(P_1)=\beta-\lambda(\alpha)=0,\qquad
t(P_2)=\beta^q-\lambda(\alpha^q)=0,\\
t(P_1^-)=-2\beta,\qquad
t(P_2^-)=-2\beta^q .
\end{gathered}
\]
Therefore
\[
\begin{gathered}
\Phi(P_1)=\Phi(P_2)=(0:0:0:1)=:Q\in C(\F_q),\\
\Phi(P_1^-)=(0:0:-2\beta:1),
\qquad
\Phi(P_2^-)=(0:0:-2\beta^q:1) .
\end{gathered}
\]
If $\beta\in\F_q$, the last two coincide with the rational point $Q^-:=(0:0:-2\beta:1)$, distinct from $Q$ because $\beta\neq 0$; if $\beta\in\F_{q^2}\setminus\F_q$, they are distinct and conjugate, and form a closed point $\Phi(P^-)\in C$ of degree two. Moreover, $\Phi(P_\infty)$ is the unique point of $C$ lying on the hyperplane $\{S=0\}$. Combining this with Step~4, the only identifications made by $\Phi$ on geometric points are
\[
P_1,P_2\longmapsto Q,
\qquad\text{and, when }\beta\in\F_q,\qquad
P_1^-,P_2^-\longmapsto Q^- .
\]
In particular $\nu^{-1}(Q)=\{P\}$ and, when $\beta\in\F_q$, $\nu^{-1}(Q^-)=\{P^-\}$, so $C$ is unibranch at these points.

Let $V\subseteq\tC\otimes\overline{\F}_q$ be the complement of the identified points, namely
\[
\begin{gathered}
V=(\tC\otimes\overline{\F}_q)\setminus\{P_1,P_2\}
\quad\text{if }\beta\notin\F_q,\\
V=(\tC\otimes\overline{\F}_q)\setminus\{P_1,P_2,P_1^-,P_2^-\}
\quad\text{if }\beta\in\F_q ,
\end{gathered}
\]
further removing the point $P_\infty$ from $V$ when $g=1$ or $g\geq 4$.
Being a non-constant morphism between complete curves, $\Phi$ is finite; by the above, its restriction to $V$ is one-to-one, and by Step~5 its differential is injective at every point of $V$. By the Lemma of Shafarevich \cite[Chapter~II, \S 5.4]{Sha13}, a finite morphism is an isomorphic embedding if and only if it is one-to-one and its differential is an isomorphic embedding of the tangent space at every point; hence $\Phi$ restricts to an isomorphism from $V$ onto its image, which is the complement in $C\otimes\overline{\F}_q$ of $\{Q\}$ (resp.\ of $\{Q,Q^-\}$ when $\beta\in\F_q$), with the point $Q_\infty:=\Phi(P_\infty)$ removed as well when $g=1$ or $g\geq 4$. Since $V$ is smooth, this image is smooth, and therefore $\Sing C\subseteq\{Q\}\cup\Sigma_\infty$ if $\beta\notin\F_q$, and $\Sing C\subseteq\{Q,Q^-\}\cup\Sigma_\infty$ if $\beta\in\F_q$, where $\Sigma_\infty\subseteq\{Q_\infty\}$ and $\Sigma_\infty=\emptyset$ when $2\leq g\leq 3$. Being finite and birational (Step~4), with $\tC$ smooth, $\Phi\colon\tC\to C$ is the normalization of $C$, which proves (b).

Moreover, $Q$ is an ordinary node. Indeed, its two geometric branches are the images of the smooth points $P_1$ and $P_2$, and since $d\Phi$ is injective at $P_1$ and $P_2$ (Step~5), each branch is smooth with a well-defined tangent direction. On the chart $(u,w,t)$ we have, at $P_1$, $du=\varphi'(\alpha)\,dx$ and, using $\varphi(\alpha)=0$, $dw=\bigl(\varphi(\alpha)+\alpha\varphi'(\alpha)\bigr)dx=\alpha\varphi'(\alpha)\,dx$, with $\varphi'(\alpha)=\alpha-\alpha^q\neq 0$; hence the tangent direction of the branch through $\Phi(P_1)$ satisfies $dw/du=\alpha$, and likewise $dw/du=\alpha^q$ at $P_2$. Since $\alpha\neq\alpha^q$, the two tangent directions are distinct --- and conjugate, so that the tangent cone of $Q$ is irreducible over $\F_q$ --- and $Q$ is an ordinary node. When $\beta\in\F_q$, the same computation at $P_1^-$ and $P_2^-$ shows that $Q^-$ is an ordinary node as well.

\emph{Step 7: the local rings at $Q$ and $Q^-$, the genus and the number of rational points.}
We first show that $Q$ is a non-split singularity. Every element of $\OO_{C,Q}$ can be written as $G(u,w,t)/G'(u,w,t)$ with $G,G'\in\F_q[u,w,t]$ and $G'(Q)\neq 0$; since $u,w,t$ vanish at $P$, its residue at $P$ equals $G(0,0,0)/G'(0,0,0)\in\F_q$, whence
\[
\OO_{C,Q}\subseteq\F_q+\mm_{\tC,P} .
\]
As the residue field of $\OO_{C,Q}$ is $\F_q$ while that of $\OO_{\tC,P}$ is $\F_{q^2}$, the inclusion $\OO_{C,Q}\subseteq\OO_{\tC,P}$ is strict, so $Q$ is singular and $\delta_Q\geq 1$. (Note also that the place of $\F_q(x)$ defined by $\varphi$ is unramified in $K$, because $\varphi\nmid f$; hence $v_P(u)=v_P(\varphi(x))=1$, and likewise $v_{P^-}(u)=1$.) For the reverse estimate we argue directly, comparing $\OO_{C,Q}$ with $\OO_{\tC,P}$ degree by degree. Write $\OO:=\OO_{C,Q}$ and $\mm:=\mm_{\tC,P}$. Since $\OO\subseteq\F_q+\mm$ and $1\in\OO$, we have $\OO=\F_q\oplus M$ as $\F_q$-vector spaces, where $M:=\OO\cap\mm$, and the chain $\OO\subseteq\F_q+\mm\subseteq\OO_{\tC,P}$ gives
\begin{equation}\label{eq:deltasplit}
\delta_Q=\dim_{\F_q}\OO_{\tC,P}\big/\bigl(\F_q+\mm\bigr)+\dim_{\F_q}\mm/M=1+\dim_{\F_q}\mm/M ,
\end{equation}
because $\OO_{\tC,P}/(\F_q+\mm)\cong\F_{q^2}/\F_q$ has dimension one. It therefore suffices to prove that $\mm\subseteq\OO$.

We claim that $\mm^{\,n}\subseteq\OO+\mm^{\,n+1}$ for every $n\geq 1$. Indeed, $v_P(u)=1$, so $u$ is a uniformizer at $P$ and, for each $n\geq 1$, the map $\F_{q^2}\to\mm^{\,n}/\mm^{\,n+1}$, $c\mapsto c\,u^{\,n}$, is an $\F_q$-linear isomorphism, the residue field of $\OO_{\tC,P}$ being $\F_{q^2}$. Under it, the class of $u^{\,n}$ corresponds to $1$ and, since $x\equiv\alpha\pmod{\mm}$, the class of $x\,u^{\,n}$ corresponds to $\alpha$. As $\alpha\notin\F_q$ and $[\F_{q^2}:\F_q]=2$, the set $\{1,\alpha\}$ is an $\F_q$-basis of $\F_{q^2}$, so the classes of $u^{\,n}$ and $x\,u^{\,n}$ span $\mm^{\,n}/\mm^{\,n+1}$ over $\F_q$. Both elements lie in $\OO$: the first because $u\in\OO$, the second because $x\,u^{\,n}=w\,u^{\,n-1}$ with $w\in\OO$ and $n\geq 1$. This proves the claim; note that it is precisely here that the hypothesis $\alpha\notin\F_q$ is used, making $w$, and not only $u$, contribute a new direction in each degree.

Since $\OO_{\tC,P}$ is a finitely generated $\OO$-module with the same field of fractions, the conductor $(\OO:\OO_{\tC,P})$ is a nonzero ideal of $\OO_{\tC,P}$, hence equal to $\mm^{\,c}$ for some $c\geq 0$; in particular $\mm^{\,c}\subseteq\OO$. Descending induction now gives $\mm\subseteq\OO$: if $\mm^{\,n+1}\subseteq\OO$ with $1\leq n\leq c-1$, then $\mm^{\,n}\subseteq\OO+\mm^{\,n+1}\subseteq\OO$ by the claim. By \eqref{eq:deltasplit} we conclude that $\delta_Q=1$ and, moreover, that $\OO_{C,Q}=\F_q+\mm_{\tC,P}$. Note that the whole codimension sits in degree zero, in the jump of residue fields from $\F_q$ to $\F_{q^2}$: no degree $n\geq 1$ contributes, which is why the value semigroup of Definition~\ref{def:semigroup}, defined only for rational branches, is not the relevant invariant at a non-split point. Geometrically, over $\overline{\F}_q$ the point $Q$ is the ordinary node exhibited in Step~6. By Step~6, $C$ is unibranch at $Q$ with $\nu^{-1}(Q)=\{P\}$ and $\deg P=2$: the point $Q$ is a non-split singularity in the sense of Definition~\ref{def:nonsplit}, in accordance with Lemma~\ref{lem:localchar}. When $\beta\in\F_q$, the same argument, applied with $P^-$ in place of $P$ --- recall that $v_{P^-}(u)=1$ and that the residue of $x$ at $P^-$ is again $\alpha$ --- shows that $Q^-$ is a non-split singularity with branch $P^-$ and $\OO_{C,Q^-}=\F_q+\mm_{\tC,P^-}$.

\emph{Step 8: the point $Q_\infty$ and the conclusion.}
Set $Q_\infty:=\Phi(P_\infty)$. By Step~6, $P_\infty$ is the unique point of $\tC\otimes\overline{\F}_q$ mapping to $Q_\infty$, so $C$ is unibranch at $Q_\infty$ with branch $P_\infty$, and $\deg Q_\infty=\deg P_\infty=1$. For $2\leq g\leq 3$ the point $Q_\infty$ is a smooth point of $C$ by Step~6, so $\Sigma_\infty=\emptyset$, as claimed in (e).

Assume now $g=1$ or $g\geq 4$; we compute the value semigroup of Definition~\ref{def:semigroup}. For $g=1$ the curve $C$ lies, near $Q_\infty=(0:1:0:0)$, in the chart $\{W\neq 0\}$, whose coordinate functions are, by Step~2,
\[
\frac{U}{W}=\xi,
\qquad
\frac{T}{W}=\frac{\xi\bigl(\gamma-\ell_1\xi-\ell_0\xi^{2}\bigr)}{A(\xi)},
\qquad
\frac{S}{W}=\frac{\xi^{3}}{A(\xi)} ,
\]
of valuations $2$, $3$ and $6$ at $P_\infty$. The local ring $\OO_{C,Q_\infty}$ is the localization of the $\F_q$-algebra generated by these three functions at the maximal ideal of $Q_\infty$: its elements are the quotients $G/G'$, with $G,G'$ polynomials in the three coordinate functions and $G'$ not vanishing at $Q_\infty$. Since $G-G(Q_\infty)$ lies in the ideal generated by the coordinate functions, each of valuation at least $2$, while $G'$ is a unit at $P_\infty$, every element of $\OO_{C,Q_\infty}$ is congruent to a constant modulo functions of valuation at least $2$; in particular every element of positive valuation has valuation at least $2$, so that $1\notin S(Q_\infty)$, while $2,3\in S(Q_\infty)$. Therefore $S(Q_\infty)=\langle 2,3\rangle$: the point $Q_\infty$ is an ordinary cusp, $\delta_{Q_\infty}=\#\bigl(\mathbb{N}\setminus\langle2,3\rangle\bigr)=1$ by Lemma~\ref{lem:gaps}, and in fact $\OO_{C,Q_\infty}=\F_q+\mm_{\tC,P_\infty}^{2}$.

For $g\geq 4$ we work in the chart $\{T\neq 0\}$: by Step~2 its coordinate functions $U/T$, $W/T$ and $S/T$ have valuations $2g-3$, $2g-5$ and $2g+1$ at $P_\infty$, so $S(Q_\infty)$ contains the numerical semigroup $\langle 2g-5,2g-3,2g+1\rangle$, while, arguing exactly as in the case $g=1$, every element of $\OO_{C,Q_\infty}$ is congruent to a constant modulo functions of valuation at least $2g-5$, so that every element of positive valuation has valuation at least $2g-5$ and $1,\dots,2g-6\notin S(Q_\infty)$. Lemma~\ref{lem:gaps} then gives
\[
2g-6\;\leq\;\delta_{Q_\infty}\;\leq\;\#\bigl(\mathbb{N}\setminus\langle 2g-5,\,2g-3,\,2g+1\rangle\bigr),
\]
and in particular $Q_\infty$ is singular. This proves (e).

The arithmetic genus of $C$ is now computed by the genus formula $\pi=g+\sum_{Q'\in\Sing C}\delta_{Q'}$ (see \cite{AP96}), which gives the values stated in (c) and (d). Since $\nu^{-1}(Q)=\{P\}$ with $\deg P=2$, there is no rational branch over $Q$, i.e., $a_Q=0$, and likewise $a_{Q^-}=0$; over $Q_\infty$, when singular, the unique branch $P_\infty$ is rational, so $a_{Q_\infty}=1$ and $Q_\infty$ contributes no additional rational point. Equation~\eqref{lem:pointcount} then yields
\[
\#C(\F_q)=\#\tC(\F_q)+k ,
\]
where $k$ is the number of nodes, as stated in (c) and (d).

Finally, for (f): if $2\leq g\leq 3$, then $\pi-g=k$, and if $\tC$ is maximal over $\F_q$, that is, $\#\tC(\F_q)=q+1+g\lfloor 2\sqrt{q}\rfloor$, then
\[
\#C(\F_q)=\#\tC(\F_q)+(\pi-g)=q+1+g\lfloor 2\sqrt{q}\rfloor+(\pi-g) ,
\]
which is the Aubry--Perret bound \eqref{eq:AP} over $\F_q$. For $g=1$ and for $g\geq 4$ we have instead $\pi-g=k+\delta_{Q_\infty}$, whence
\[
\#C(\F_q)=\#\tC(\F_q)+k\;\leq\;q+1+g\lfloor 2\sqrt{q}\rfloor+(\pi-g)-\delta_{Q_\infty} ,
\]
with equality precisely when $\tC$ is maximal over $\F_q$: the Aubry--Perret bound is missed by exactly $\delta_{Q_\infty}\geq 1$. This proves (f) and completes the proof.
\end{proof}

\subsection{The elliptic case: a modified projection}\label{subsec:elliptic}

By Theorem~\ref{thm:construction}(e), for $g=1$ the projected curve has an ordinary cusp at $Q_\infty$ besides the nodes and, by (f), it never attains the Aubry--Perret bound. We now modify the projection, replacing the coordinate $w=x\varphi(x)$, of pole order $6$ at $P_\infty$, by a function of pole order $5$, and show that this removes the singularity at infinity. Throughout this subsection $g=1$, so $f$ has degree $3$ and $h(x)=h_1x+h_0$ has degree one. Set $m_1:=\ell_0+\ell_1\sigma$ and define
\[
z:=x\,t+\ell_1u=xy-m_1x+\ell_1\eta\;\in K ,
\]
the second equality following from $\lambda(x)=\ell_1x+\ell_0$ and $\varphi(x)=x^2-\sigma x+\eta$. From the first expression, $z$ vanishes wherever $t$ and $u$ do; from the second, $v_{P_\infty}(z)=v_{P_\infty}(xy)=-5$. In fact $m_1x-\ell_1\eta$ interpolates the values of $xy$ at the two points lying under $P$: $m_1\alpha-\ell_1\eta=\ell_0\alpha+\ell_1(\sigma\alpha-\eta)=\alpha(\ell_0+\ell_1\alpha)=\alpha\beta$, and likewise at $(\alpha^q,\beta^q)$.

\begin{theorem}\label{thm:elliptic}
Let $g=1$, keep the standing hypotheses on $P$, and consider the map
\[
\Phi_{\mathrm e}\colon\tC\dashrightarrow\PP^3,
\qquad
(x,y)\longmapsto\bigl(\varphi(x):z:y-\lambda(x):1\bigr)=:(U:W:T:S).
\]
Then:
\begin{enumerate}
\item[(a)] $\Phi_{\mathrm e}$ extends to a morphism on all of $\tC$, with $\Phi_{\mathrm e}(P_\infty)=(0:1:0:0)$, and $\Phi_{\mathrm e}\colon\tC\to C_{\mathrm e}:=\Phi_{\mathrm e}(\tC)$ is the normalization of its image;
\item[(b)] $\Sing C_{\mathrm e}=\{Q\}$, where $Q=(0:0:0:1)$ is a non-split node whose branch is $P$ --- irrespective of the position of $\beta$; consequently the arithmetic genus of $C_{\mathrm e}$ is $2$ and $\#C_{\mathrm e}(\F_q)=\#\tC(\F_q)+1$;
\item[(c)] if $\tC$ is maximal over $\F_q$, then $C_{\mathrm e}$ attains the Aubry--Perret bound \eqref{eq:AP} over $\F_q$;
\end{enumerate}
\end{theorem}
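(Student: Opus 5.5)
The plan is to follow the eight-step scheme of the proof of Theorem~\ref{thm:construction}, replacing $w=x\varphi(x)$ by $z=xt+\ell_1u$ throughout. On the affine chart the coordinates are $u=\varphi(x)$, $z$, $t=y-\lambda(x)$, which are regular functions with values in $\{S\neq0\}$. At $P_\infty$ the pole orders are $4,5,3,0$, so multiplying the coordinate vector by a function of valuation $5$, for instance $\gamma\xi^2$, gives
\[
(U:W:T:S)=\bigl(\gamma\xi^2\varphi:\gamma\xi^2z:\gamma\xi^2t:\gamma\xi^2\bigr),
\]
with valuations $1,0,2,5$. The second coordinate is a unit, which gives $\Phi_{\mathrm e}(P_\infty)=(0:1:0:0)$. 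More importantly, in the chart $\{W\neq0\}$ the function $U/W$ has valuation $1$, so $d\Phi_{\mathrm e}\neq0$ at $P_\infty$. This is exactly the point of lowering the pole order from $6$ to $5$: the value semigroup at $Q_\infty$ now contains $1$, so $Q_\infty$ is smooth.

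For birationality I will take the open set $U_C=C_{\mathrm e}\cap\{S\neq0,\,U\neq0\}$ as before, and recover $x$ and $y$ from $(u,z,t)$. From $z=xt+\ell_1u$ we get
\[
x=\frac{z-\ell_1u}{t}
\]
wherever $t\neq0$. This is awkward, so it is better to use $t^2+2\lambda t=uh$ from \eqref{eq:functional}. Since $t\cdot(t+2\lambda)=uh$ and $z-\ell_1 u=xt$, I expect to find a rational expression of $x$ in $u,z,t$ valid off finitely many points. At worst I will simply show that $\F_q(u,z,t)=K$: indeed $x$ lies in $\F_q(u,z,t)$ because $x=(z-\ell_1u)/t$, and then $y=t+\lambda(x)$. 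I will then argue injectivity on geometric points directly. If two points have the same $u$, then they have the same $x$ up to the pair $\{x,\sigma-x\}$. The same $t$ and $z$ then force $xt=x't$, so either $t=0$ or $x=x'$. The residual cases, where $t=0$ and $\varphi(x)=\varphi(x')$, will be handled by hand using $t^2+2\lambda t=\varphi h$: if $t=0$ and $u\neq0$, then $h(x)=0$, and $h$ has degree one, so $x$ is determined.

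Next comes the differential on the affine part. Where $y\neq0$, $x$ is a local parameter, $du=(2x-\sigma)\,dx$ and $dt=(f'/(2y)-\ell_1)\,dx$. If both vanish, I will check that
\[
dz=\bigl(y+x\,dy/dx-m_1\bigr)\,dx\neq0
\]
at $x=\sigma/2$, or else argue via injectivity together with a degree count. At Weierstrass points $y$ is a local parameter and $dt=dy\neq0$. The fibre analysis is as follows. The points $P_1,P_2$ go to $Q=(0:0:0:1)$, because $t$ and $u$ vanish there, and hence so does $z$. The points $P_1^-,P_2^-$ go to $(0:-2\alpha\beta:-2\beta:1)$ and its conjugate. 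Since $\alpha\notin\F_q$, these two points are distinct even when $\beta\in\F_q$, which is why only one node appears irrespective of $\beta$. Shafarevich's lemma and finiteness then give that $\Phi_{\mathrm e}$ is the normalization and that $\Sing C_{\mathrm e}\subseteq\{Q\}$.

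At $Q$ I will repeat Step~7 of the proof of Theorem~\ref{thm:construction}. We have $v_P(u)=1$, and the role of $w$ is played by $z$ or $zu^{n-1}$. Since $z=xt+\ell_1u$ and $t$ vanishes at $P$, the residue of $z/u$ at $P$ is $\alpha\,(t/u)(P)+\ell_1$. So I need $(t/u)(P)\notin\F_q$, or more precisely that $1$ and $(z/u)(P)$ are $\F_q$-independent. If that fails, I would use $t/u$ and $z/u$ together, or $t u^{n-1}$. I expect this to be the main obstacle, namely checking that $u^n$, $zu^{n-1}$ and $tu^{n-1}$ span $\F_{q^2}$ in each graded piece. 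I will compute $(t/u)(P_1)=\lim (y-\lambda)/\varphi$, which is the derivative ratio
\[
\frac{f'(\alpha)/(2\beta)-\ell_1}{\alpha-\alpha^q},
\]
and verify that it is nonrational, or that $\alpha$ times it is. The conductor argument then gives $\OO_{C_{\mathrm e},Q}=\F_q+\mm_{\tC,P}$ and $\delta_Q=1$. We conclude that $\pi=2$, that $\#C_{\mathrm e}(\F_q)=\#\tC(\F_q)+1$ by \eqref{lem:pointcount}, and part (c) follows as in Theorem~\ref{thm:construction}(f).
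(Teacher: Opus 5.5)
Your route is the paper's own: the same chart at $P_\infty$, birationality from $x=(z-\ell_1u)/t$, the identification argument via $xt=x't$ and the linearity of $h$, Shafarevich's lemma off $\{P_1,P_2\}$, and the Step~7 conductor argument with $u^n$, $zu^{n-1}$, $tu^{n-1}$ in each graded piece. However, the two verifications you postpone are exactly where the hypothesis $g=1$ is used, and the proposal gives no argument for either.

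\emph{The check at $Q$.} Your condition ``$c:=(t/u)(P_1)\notin\F_q$ or $\alpha c\notin\F_q$'' is just $c\neq 0$, i.e.\ $v_P(t)=1$. If $t$ vanished to order $\geq 2$ at $P$, the residues of $u^n$, $zu^{n-1}$, $tu^{n-1}$ modulo $\mm_{\tC,P}^{\,n+1}$ would be $1,\ell_1,0$. These span only $\F_q$, and your spanning argument would fail.

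\emph{The check at $x=\sigma/2$.} At an affine point with $x=\sigma/2$ and $du=dt=0$ one has $dz=t\,dx$, so you need $t\neq 0$ there. Your fallback (``injectivity together with a degree count'') cannot replace this. An injective finite map can still have vanishing differential; this is precisely the cusp $Q_\infty$ of Theorem~\ref{thm:construction} for $g=1$.

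\emph{The missing observation.} Both checks follow from one fact. The function $t=y-\lambda(x)$ has a single pole, of order $3$, at $P_\infty$. Its zeros on $\tC\otimes\overline{\F}_q$ are the points $(x_0,\lambda(x_0))$ with $x_0$ a root of $f-\lambda^2=\varphi h$. These are $P_1$, $P_2$ and $(x_3,\lambda(x_3))$, where $x_3\in\F_q$ is the root of the linear polynomial $h$; note $h_1\neq 0$, being the leading coefficient of $f$. Since these are three distinct points, every zero of $t$ is simple.

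\emph{Consequences.} First, $c\neq 0$. In terms of your own formula, $c=0$ if and only if $(f-\lambda^2)'(\alpha)=(\alpha-\alpha^q)h(\alpha)=0$, and $h(\alpha)\neq 0$ because $\alpha\notin\F_q$. Hence $\{c,\alpha c\}$ is an $\F_q$-basis of $\F_{q^2}$. Second, at $x=\sigma/2$ the conditions $t=0$ and $dt=0$ would give a double zero of $t$, since $x-\sigma/2$ is a local parameter there. So $dz\neq 0$. The paper makes the same two checks via $\varphi'(\alpha)h(\alpha)\neq 0$ and $(\varphi h)'(\sigma/2)=\varphi(\sigma/2)h_1\neq 0$.
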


\begin{proof}
Since $xt=z-\ell_1u$ by definition of $z$, substituting $x=(z-\ell_1u)/t$ into $t^2\varphi(x)=t^2u$ and into the functional equation \eqref{eq:functional} multiplied by $t$ yields the identities of functions on $\tC$
\begin{equation}\label{eq:Ie}
t^2u=(z-\ell_1u)^2-\sigma\,t\,(z-\ell_1u)+\eta\,t^2
\tag{$\mathrm{I_e}$}
\end{equation}
\begin{equation}\label{eq:IIe}
t^3+2\ell_1t\,(z-\ell_1u)+2\ell_0t^2=u\bigl(h_1(z-\ell_1u)+h_0t\bigr);
\tag{$\mathrm{II_e}$}
\end{equation}
moreover $x=(z-\ell_1u)/t$ shows that $\F_q(u,z,t)=\F_q(x,y)=K$, so $\Phi_{\mathrm e}$ is birational onto its image.

\emph{Extension at $P_\infty$.} The pole orders of $(u,z,t)$ at $P_\infty$ are $4$, $5$ and $3$. Dividing the tuple $(u:z:t:1)$ by $z$ we obtain the coordinates $u/z$, $t/z$ and $1/z$ of the chart $\{W\neq 0\}$, of valuations $1$, $2$ and $5$ at $P_\infty$. Hence $\Phi_{\mathrm e}$ extends to $P_\infty$ with $\Phi_{\mathrm e}(P_\infty)=(0:1:0:0)$ and, since $u/z$ has valuation~$1$, the differential $d\Phi_{\mathrm e}$ does not vanish at $P_\infty$.

\emph{Identifications.} Let $R,R'$ be distinct points of $\tC\otimes\overline{\F}_q$, away from $P_\infty$, with $\Phi_{\mathrm e}(R)=\Phi_{\mathrm e}(R')$, say $R=(x_0,y_0)$ and $R'=(x_0',y_0')$. From $u(R)=u(R')$ we get $x_0'\in\{x_0,\sigma-x_0\}$; if $x_0'=x_0$, then $t(R)=t(R')$ forces $y_0'=y_0$, a contradiction; so $x_0'=\sigma-x_0\neq x_0$. Then $t(R')=t(R)$ gives $y_0'=y_0+\ell_1(\sigma-2x_0)$, and $z(R')=z(R)$ reduces, by $z=xt+\ell_1u$ and $t(R')=t(R)$, to $t(R)\,(x_0'-x_0)=0$, that is, $t(R)=0$. Hence $y_0=\lambda(x_0)$ and $y_0'=\lambda(x_0')$, and the functional equation gives $\varphi(x_0)h(x_0)=0=\varphi(x_0')h(x_0')$. If $\varphi(x_0)=0$, then $\{x_0,x_0'\}=\{\alpha,\alpha^q\}$ and $\{R,R'\}=\{P_1,P_2\}$. Otherwise $h(x_0)=h(\sigma-x_0)=0$ with $x_0\neq\sigma-x_0$, which is impossible since $h$ has degree one. Therefore $\Phi_{\mathrm e}$ identifies exactly $P_1$ with $P_2$ and is injective elsewhere; in particular $\Phi_{\mathrm e}(P_1)=\Phi_{\mathrm e}(P_2)=(0:0:0:1)=Q$, while $\Phi_{\mathrm e}(P_1^-)=(0:-2\alpha\beta:-2\beta:1)$ and $\Phi_{\mathrm e}(P_2^-)=(0:-2\alpha^q\beta^q:-2\beta^q:1)$ remain distinct --- also when $\beta\in\F_q$, since $\alpha\neq\alpha^q$.

\emph{The differential at the affine points.} We have $du=\varphi'(x)\,dx$, $dt=dy-\ell_1\,dx$ and $dz=t\,dx+x\,dt+\ell_1\,du$. At a point with $y=0$ the function $y$ is a local parameter, $dx$ vanishes there and $dt=dy\neq 0$. At a point with $y\neq 0$ the function $x$ is a local parameter; if $du$ and $dt$ vanish simultaneously, then $x=\sigma/2$ and $dz=t\,dx$ there, and if also $dz=0$, then $t=0$, i.e., $y=\lambda(\sigma/2)$; from $t=0$ and $\varphi(\sigma/2)=-(\sigma^2-4\eta)/4\neq 0$ the functional equation gives $h(\sigma/2)=0$, whence
\[
0=\bigl(f-\lambda^2\bigr)'(\sigma/2)=(\varphi h)'(\sigma/2)=\varphi(\sigma/2)\,h_1\neq 0 ,
\]
a contradiction (the first equality is $dt=0$ rewritten as $f'(\sigma/2)=2y\ell_1$). Hence $d\Phi_{\mathrm e}$ vanishes nowhere on $\tC\otimes\overline{\F}_q$.

\emph{Conclusion.} As in Steps~6--7 of the proof of Theorem~\ref{thm:construction}, the Lemma of Shafarevich \cite[Chapter~II, \S5.4]{Sha13} applied to $V:=(\tC\otimes\overline{\F}_q)\setminus\{P_1,P_2\}$ shows that $C_{\mathrm e}$ is smooth away from $Q$ and that $\Phi_{\mathrm e}$ is the normalization of $C_{\mathrm e}$, proving (a). At $Q$ the two branches are smooth with tangent directions determined by $(dz-\ell_1du)/dt=x$, evaluated at $P_1$ and $P_2$; here $dt(P_1)\neq 0$ because $dt(P_1)=0$ would give $(\varphi h)'(\alpha)=\varphi'(\alpha)h(\alpha)=0$, while $\varphi'(\alpha)=\alpha-\alpha^q\neq 0$ and $h(\alpha)\neq 0$, the root of $h$ being rational. The two directions are therefore distinct and conjugate, and $Q$ is an ordinary node. The argument of Step~7 now applies with one adaptation, since $w$ is not a coordinate of $C_{\mathrm e}$: as there, $\OO_{C_{\mathrm e},Q}\subseteq\F_q+\mm_{\tC,P}$, $v_P(u)=1$, and it suffices to show that the classes of elements of $\OO_{C_{\mathrm e},Q}$ span $\mm_{\tC,P}^{\,n}/\mm_{\tC,P}^{\,n+1}$ over $\F_q$ for every $n\geq 1$. Since $v_P(t)=1$ as well, the function $t/u$ is a unit at $P$; let $c\in\F_{q^2}^{\times}$ be its residue. Then
\[
t\,u^{\,n-1}\equiv c\,u^{\,n}
\qquad\text{and}\qquad
(z-\ell_1u)\,u^{\,n-1}=x\,t\,u^{\,n-1}\equiv\alpha c\,u^{\,n}
\pmod{\mm_{\tC,P}^{\,n+1}},
\]
both elements lying in $\OO_{C_{\mathrm e},Q}$, and $\{c,\alpha c\}$ is an $\F_q$-basis of $\F_{q^2}$ because $\alpha\notin\F_q$. The conductor argument of Step~7 then gives $\mm_{\tC,P}\subseteq\OO_{C_{\mathrm e},Q}$, whence $\OO_{C_{\mathrm e},Q}=\F_q+\mm_{\tC,P}$ and $\delta_Q=1$: the point $Q$ is a non-split singularity, the arithmetic genus of $C_{\mathrm e}$ equals $g+1=2$, and $\#C_{\mathrm e}(\F_q)=\#\tC(\F_q)+1$; assertion (c) follows as in Step~8.
\end{proof}

\subsection{Maximal curves on cones: the Rosa--St\"ohr model}\label{subsec:cone}

For $g\geq 4$, maximality is recovered by trading the ambient space $\PP^3$ for a cone in $\PP^{g}$. The key observation is that the functional equation \eqref{eq:functional} is, after the substitution
\[
Y:=\frac{t}{\varphi(x)}=\frac{y-\lambda(x)}{\varphi(x)}\;\in K ,
\]
a Rosa--St\"ohr normal form: dividing \eqref{eq:functional} by $\varphi(x)^2$ gives the plane model
\begin{equation}\label{eq:RSform}
\varphi(x)\,Y^2+2\lambda(x)\,Y-h(x)=0
\end{equation}
of the function field $K$, which is an equation $c_2(x)Y^2+c_1(x)Y+c_0(x)=0$ as in \cite[Theorem~1]{RS02}, with
\[
c_2=\varphi,\qquad c_1=2\lambda,\qquad c_0=-h ,
\]
of degrees $2$, at most $1$ and $2g-1$; setting $\pi:=g+1$, these satisfy $\deg c_2\leq 2$, $\deg c_1\leq\pi$ and $\deg c_0\leq 2\pi-2$, with equality attained by $c_2$. Two remarks on notation are in order: the letter $g$ in \cite{RS02} denotes the \emph{arithmetic} genus of the trigonal curve, which in our setting is $\pi=g+1$; thus our hypothesis $g\geq 4$ is exactly the standing hypothesis $\pi\geq 5$ of \cite{RS02}, and the hyperelliptic Gorenstein curve of arithmetic genus $\pi-1$ appearing in \cite[Theorem~3]{RS02} has, in our setting, arithmetic genus $g$ --- it will be $\tC$ itself. By \cite[Theorem~1]{RS02}, the closure $C'$ of the image of the morphism
\[
\Psi\colon\tC\longrightarrow\PP^{g},
\qquad
(x,y)\longmapsto\bigl(1:x:x^2:\cdots:x^{g-1}:Y\bigr),
\]
is a canonical Gorenstein curve of arithmetic genus $\pi=g+1$ lying on the cone $S\subset\PP^{g}$ over the rational normal curve of degree $g-1$, with vertex $Q'=(0:\cdots:0:1)$. Note that $C'$, $S$ and $Q'$ are defined over $\F_q$, all the data in \eqref{eq:RSform} being rational; the results of \cite{RS02}, established over the algebraic closure, apply to $C'\otimes\overline{\F}_q$.

\begin{theorem}\label{thm:cone}
Let $g\geq 4$ and keep the standing hypotheses on $P$. Then:
\begin{enumerate}
\item[(a)] $\Psi\colon\tC\to C'$ is the normalization of $C'$, and it is an isomorphism away from the vertex $Q'$ of the cone;
\item[(b)] $\Sing C'=\{Q'\}$, and $Q'$ is a non-split node of $C'$ whose branch is the hyperelliptic conjugate $P^-$ of $P$, the closed point of degree two lying under $P_1^-=(\alpha,-\beta)$ and $P_2^-=(\alpha^q,-\beta^q)$; consequently the arithmetic genus of $C'$ is $g+1$ and $\#C'(\F_q)=\#\tC(\F_q)+1$;
\item[(c)] if $\tC$ is maximal over $\F_q$, then $C'$ attains the Aubry--Perret bound \eqref{eq:AP} over $\F_q$ --- for every $g\geq 4$;
\item[(d)] $C'$ is a trigonal Gorenstein curve with Maroni invariant zero, whose $g^1_3$ is cut out by the rulings of the cone and has $Q'$ as its unique base point.
\end{enumerate}
\end{theorem}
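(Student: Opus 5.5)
The plan is to follow the pattern of the proof of Theorem~\ref{thm:construction}: compute $\Psi$ on the affine chart and at $P_\infty$, find the fibres and check the differential, and then analyse the one singular point through its local ring. Throughout, $Y=(y-\lambda(x))/\varphi(x)$, and the valuations at $P$, $P^-$ and $P_\infty$ come from Subsection~\ref{subsec:degtwo} and Step~7.

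\emph{Behaviour at the three special points.}
\begin{itemize}
\item At $P_1,P_2$ both $t=y-\lambda$ and $\varphi$ vanish, each to order one. So $Y$ is regular there, and $\Psi(P_1)\neq\Psi(P_2)$ simply because their $x$-coordinates $\alpha\neq\alpha^q$ differ.
\item At $P_1^-,P_2^-$ we have $t=-2\beta\neq0$ (resp.\ $-2\beta^q$) while $v_{P^-}(\varphi)=1$. Hence $Y$ has a simple pole. Dividing by $Y$ gives the coordinates $x^i s$, $0\le i\le g-1$, with $s:=\varphi/t$ and $v_{P^-}(s)=1$, so $\Psi(P^-_1)=\Psi(P^-_2)=Q'$.
\item At $P_\infty$ the valuations $v(t)=-(2g+1)$ and $v(\varphi)=-4$ give $Y$ a pole of order $2g-3$, whereas $x^{g-1}$ has pole order $2g-2$. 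Hence $\Psi(P_\infty)=(0:\cdots:0:1:0)$. In the chart $\{X_{g-1}\neq0\}$ the coordinate $Y/x^{g-1}$ has valuation $1$, so $d\Psi\neq0$ at $P_\infty$.
\end{itemize}

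\emph{Injectivity and the differential away from $Q'$.} An affine point outside $P^-$ is recovered from $x$ together with $y=\lambda(x)+\varphi(x)Y$, so $\Psi$ is injective off $\{P_1^-,P_2^-\}$. For the differential:
\begin{itemize}
\item at affine points with $y\ne0$, the coordinate $x$ is a local parameter, so $d\Psi\neq0$;
\item at points with $y=0$, we have $dx=0$, $y$ is a local parameter, and $dY=dy/\varphi(x)\neq0$. Here $\varphi(x)\neq0$ because $\varphi\nmid f$.
\end{itemize}
Shafarevich's lemma, applied as in Step~6, then shows that $\Psi$ is an isomorphism from $(\tC\otimes\overline{\F}_q)\setminus\{P_1^-,P_2^-\}$ onto $C'\setminus\{Q'\}$. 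Birationality also follows, since $x$ and $Y$ generate $K$. This gives (a) and $\Sing C'\subseteq\{Q'\}$, and it shows that $\Psi^{-1}(Q')=\{P^-\}$ is unibranch.

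\emph{The local ring at $Q'$.} This reproduces Step~7 with $P^-$ in place of $P$ and the uniformizer $s$ in place of $u$. First, $\OO_{C',Q'}\subseteq\F_q+\mm_{\tC,P^-}$, because all the coordinates $x^is$ vanish at $P^-$; this forces $\delta_{Q'}\ge1$. Next, for every $n\ge1$ the classes of $s^n$ and $xs^n=(xs)s^{n-1}$ span $\mm^n/\mm^{n+1}$ over $\F_q$, because the residue of $x$ at $P^-$ is $\alpha\notin\F_q$. The conductor argument then gives $\OO_{C',Q'}=\F_q+\mm_{\tC,P^-}$, so $\delta_{Q'}=1$ and $Q'$ is non-split. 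Over $\overline{\F}_q$ the tangent directions of the two branches are $\alpha$ and $\alpha^q$, which are distinct, so $Q'$ is an ordinary node. Now $\pi=g+1$, and \eqref{lem:pointcount} gives $\#C'(\F_q)=\#\tC(\F_q)+1$, which proves (b); (c) then follows exactly as in Step~8.

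\emph{The trigonal structure, part (d).} The $g^1_3$ is the pencil cut by the hyperplanes through a ruling, namely the span of $X_0$ and $X_1$, i.e.\ the pencil of $x$. Pulled back to $\tC$ it has degree $2$, the hyperelliptic pencil. On $C'$, however, it has degree $3$: the extra unit comes from the vertex $Q'$, which every ruling passes through. This makes $Q'$ a base point, and it is a non-removable one in the sense of Rosa--St\"ohr, since no morphism of degree $3$ to $\PP^1$ exists. Maroni invariant zero should follow because the surface is a cone rather than a smooth scroll. This step rests on the degree computation via the fractional ideal at $Q'$ together with \cite[Theorems~1 and~3]{RS02}.

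I expect (d) to be the main obstacle. It requires matching the $g^1_3$, as a linear system built from a fractional ideal sheaf, with the Rosa--St\"ohr normal form, and checking that the hypotheses of \cite{RS02}, including $\deg c_2=2$, really produce the vertex as a non-removable base point. My first approach would be a direct count: compute $\dim_{\F_q}\bigl(\OO_{Q'}+x\OO_{Q'}\bigr)/\OO_{Q'}$ using $\OO_{Q'}=\F_q+\mm_{\tC,P^-}$. This index is $1$, and combined with the degree $2$ away from $Q'$ it gives degree $3$.
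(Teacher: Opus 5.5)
Your argument for (a)--(c) is correct, but it takes a different route from the paper. Both proofs locate $\Psi^{-1}(Q')=\{P^-\}$ by the same valuation count. After that the paper relies on Rosa--St\"ohr:
\begin{itemize}
\item \cite[Theorem~3]{RS02} shows the vertex is a node, its blow-up being $\tC$ marked at $P_1^-,P_2^-$;
\item \cite[Theorem~1]{RS02} gives arithmetic genus $g+1$;
\item the genus formula with $\delta_{Q'}=1=\pi-g$ then forces smoothness elsewhere, and non-splitness is read off via Lemma~\ref{lem:localchar}.
\end{itemize}
You instead rerun Steps~4--7 of Theorem~\ref{thm:construction} for $\Psi$:
\begin{itemize}
\item injectivity via $y=\lambda+\varphi Y$;
\item nonvanishing of $d\Psi$, including at $P_\infty$ through $Y/x^{g-1}$ of valuation $1$;
\item Shafarevich's lemma;
\item the conductor argument with the uniformizer $s=\varphi/t$ and the pair $s^n$, $xs^n$, both in $\OO_{C',Q'}$ since $g-1\geq 1$.
\end{itemize}
This makes (a)--(c) independent of the structure theorems of \cite{RS02}, which are proved over $\overline{\F}_q$. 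It also yields $\pi=g+1$ as an output rather than an input; the cost is length. One harmless slip: $t$ vanishes to order exactly one at $P_1,P_2$ only when $h(\alpha)\neq 0$, since $v_{P_1}(t)=1+v_{P_1}(h(x))$. You only need $v(t)\geq 1$, so nothing breaks.

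Part (d), however, is not proved. What you do establish is $\deg_{C'}\OO\langle 1,x\rangle=3$ via $\OO_{C',Q'}+x\,\OO_{C',Q'}=\OO_{\tC,P^-}$, and that is correct; it is the computation of Theorem~\ref{thm:gonality}(b). The rest is only asserted:
\begin{itemize}
\item Non-removability of $Q'$ is justified by ``no morphism of degree $3$ to $\PP^1$ exists''. You have not shown this, and it is really a consequence of non-removability plus uniqueness of the $g^1_3$, not a reason for it.
\item ``Maroni invariant zero should follow'' is not an argument.
\item Trigonality also requires ruling out a $g^1_2$, which you do not address.
\item The hyperplanes $aX_0+bX_1=0$ cut the cone in a moving ruling plus the ruling at infinity with multiplicity $g-2$; the $g^1_3$ is only the moving part.
\end{itemize}
The paper obtains all of (d) from \cite[Theorem~2]{RS02}, not Theorems~1 and~3. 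It applies because $C'$ is the canonical Gorenstein curve on the cone given by \cite[Theorem~1]{RS02}, and $\pi=g+1\geq 5$.

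If you prefer to argue directly, each piece is short:
\begin{itemize}
\item $Q'$ is a non-removable base point because $\OO_{\tC,P^-}$ is not a principal $\OO_{C',Q'}$-module. Since $\mm_{C',Q'}\OO_{\tC,P^-}=\mm_{\tC,P^-}$, Nakayama says its minimal number of generators is $\dim_{\F_q}\OO_{\tC,P^-}/\mm_{\tC,P^-}=2$.
\item $Q'$ is the only base point because away from $Q'$ the system is the base-point-free pencil of $x$ on $\tC$.
\item A $g^1_2$ is excluded by the argument of Theorem~\ref{thm:gonality}(b).
\item The Maroni invariant must be read off from the scroll type as in \cite{RS02}, using that $C'$ is the canonical model lying on $S$.
\end{itemize}
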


\begin{proof}
We first locate the points of $\tC$ lying over the vertex. The valuations of $Y=t/\varphi$ are computed from those of $t=y-\lambda(x)$ and $\varphi(x)=(x-\alpha)(x-\alpha^q)$: at $P_1$ and $P_2$ we have $v(t)\geq 1=v(\varphi)$, so $Y$ is regular there (the zero of $t$ is simple exactly when $h(\alpha)\neq 0$, but this is not needed); at $P_1^-$ we have $t(P_1^-)=-2\beta\neq 0$ while $v_{P_1^-}(\varphi)=1$, so $v_{P_1^-}(Y)=-1$, and likewise at $P_2^-$; at $P_\infty$, by \eqref{eq:valinfty}, $v_{P_\infty}(Y)=-(2g+1)+4=-(2g-3)$; and $Y$ is regular at every other point. In the coordinates of $\Psi$, a point $R\in\tC\otimes\overline{\F}_q$ maps to the vertex if and only if the last coordinate dominates all the others, that is, $v_R(Y)<\min\bigl(0,(g-1)v_R(x)\bigr)$. At $P_1^-$ and $P_2^-$ this reads $-1<0$: both map to $Q'$. At $P_\infty$ we have $v_{P_\infty}(x^{g-1})=-2(g-1)<-(2g-3)=v_{P_\infty}(Y)$, so the coordinate $x^{g-1}$ dominates and
\[
\Psi(P_\infty)=(0:\cdots:0:1:0) ,
\]
a point of the ruling of $S$ over the point at infinity of the rational normal curve, distinct from $Q'$; and no other point maps to $Q'$. Hence $\Psi^{-1}(Q')=\{P^-\}$, the two geometric branches of $C'$ at $Q'$ being $P_1^-$ and $P_2^-$ --- in accordance with \cite{RS02}, where the branches at the vertex correspond to the roots $\alpha,\alpha^q$ of $c_2=\varphi$.

By \cite[Theorem~3]{RS02}, the vertex is a node: under the correspondence of loc.\ cit., the blow-up of $C'\otimes\overline{\F}_q$ at $Q'$ is the hyperelliptic curve $z^2+c_1(x)z+c_0(x)c_2(x)=0$, which is precisely the functional equation \eqref{eq:functional}, hence $\tC$, marked at the two points $P_1^-$ and $P_2^-$; these are smooth and not conjugate under the hyperelliptic involution, since the involution maps $P_1^-$ to $P_1\neq P_2^-$. Now the genus does the rest: $C'\otimes\overline{\F}_q$ is a canonical Gorenstein curve of arithmetic genus $\pi=g+1$ by \cite[Theorem~1]{RS02}, its normalization $\tC$ has genus $g$, and the node at the vertex already accounts for $\delta_{Q'}=1=\pi-g$; by the genus formula, $C'$ is smooth away from $Q'$, and $\Psi$, being finite and birational onto $C'$ with $\tC$ smooth, is the normalization, an isomorphism away from $Q'$. This proves (a) and the first part of (b). Over $\F_q$, the point $Q'$ is rational and unibranch with branch $P^-$ of degree two and $\delta_{Q'}=1$: a non-split singularity in the sense of Definition~\ref{def:nonsplit}, so that $\OO_{C',Q'}=\F_q+\mm_{\tC,P^-}$ by Lemma~\ref{lem:localchar}. The genus formula gives arithmetic genus $g+1$, and~\eqref{lem:pointcount}, with $a_{Q'}=0$, gives $\#C'(\F_q)=\#\tC(\F_q)+1$, completing (b); assertion (c) follows exactly as in Step~8 of the proof of Theorem~\ref{thm:construction}, since here $\pi-g=1$. Finally, (d) is \cite[Theorem~2]{RS02}, applicable because $\pi=g+1\geq 5$.
\end{proof}

\begin{remark}\label{rem:coneswap}
Replacing $\beta$ by $-\beta$, i.e., choosing the other point of $\tC\otimes\F_{q^2}$ over $\alpha$, exchanges $P$ and $P^-$ and replaces $\lambda$ by $-\lambda$ in \eqref{eq:RSform}; so a cone model whose node lies under $P$ is obtained from the normal form $\varphi Y^2-2\lambda Y-h=0$. Note also that the construction is insensitive to the position of $\beta$: as in Theorem~\ref{thm:elliptic}, the case distinction in Theorem~\ref{thm:construction}(c)--(d) disappears, and the cone model always has a single node. The restriction $g\geq 4$ in Theorem~\ref{thm:cone} reflects the standing hypothesis $\pi\geq 5$ on the arithmetic genus in \cite{RS02}.
\end{remark}

\subsection{Families and explicit examples}\label{subsec:examples}

Theorem~\ref{thm:construction}(f), Theorem~\ref{thm:elliptic} and Theorem~\ref{thm:cone} convert any family of maximal elliptic or hyperelliptic curves with a model \eqref{eq:model} into families of singular curves attaining the Aubry--Perret bound, provided a closed point of degree two with $f(\alpha)\neq 0$ is chosen. Observe that maximality is required over the \emph{ground} field: to apply the constructions to a curve which is maximal over $\F_{q^2}$, one takes $\F_{q^2}$ as the ground field and $\alpha\in\F_{q^4}\setminus\F_{q^2}$. Two classical families provide input curves. The first is due to Tafazolian \cite{Taf12}: the hyperelliptic curves
\[
y^2=x^m+1
\]
are maximal over $\F_{q^2}$ whenever $m$ divides $q+1$. The second is the Artin--Schreier family
\[
y^2=x^q+x ,
\]
which is maximal over $\F_{q^2}$, being a quotient of the Hermitian curve; see Tafazolian--Torres \cite[Theorem~3.3]{TT17}.

In the examples below the ground fields are $\F_{25}=\F_5(a)$, with $a^2+a+1=0$, for the elliptic case, and $\F_{81}=\F_3(\theta)$, with $\theta^4=\theta^3+1$, for genus two; the third example is stated over $\F_3$. All numerical data below --- interpolation data, equations, identifications and point counts --- have been verified by computer.

\begin{example}[$g=1$, one node, maximal]\label{ex:g1max}
Let $\tC\colon y^2=x^3+1$ over $\F_{25}$, which is maximal with $\#\tC(\F_{25})=36=25+1+\lfloor2\sqrt{25}\rfloor$. The element $n:=1+2a$ is a non-square of $\F_{25}$; let $b\in\F_{625}$ with $b^2=n$ and take $\alpha:=nb$, so that $\alpha^2=n^3=2+4a$ and $\alpha\notin\F_{25}$. Then $f(\alpha)=n^4b+1=4b+1\neq0$ is a square in $\F_{625}$ whose square roots $\pm\beta$ do not lie in $\F_{25}$, and $P$ is the closed point of degree two under $(\alpha,\beta)$. The interpolation data are
\[
\begin{gathered}
\sigma=0,\qquad
\eta=a+3,\qquad
\varphi(x)=x^2+a+3,\\
\lambda(x)=(4a+1)x+a,\qquad
h(x)=x+3a ,
\end{gathered}
\]
and $z=xt+(4a+1)u$. The equations \eqref{eq:Ie} and \eqref{eq:IIe} in Theorem~\ref{thm:elliptic} read
\[
\begin{gathered}
t^2u=\bigl(z-(4a+1)u\bigr)^2+(a+3)t^2,\\
t^3+(3a+2)\,t\bigl(z-(4a+1)u\bigr)+2a\,t^2=u\bigl(z-(4a+1)u\bigr)+3a\,ut .
\end{gathered}
\]
Here the arithmetic genus is $2$ and
\[
\#C_{\mathrm e}(\F_{25})=37=25+1+1\cdot\lfloor 2\sqrt{25}\rfloor+1 ,
\]
so $C_{\mathrm e}$ is a maximal curve over $\F_{25}$.
\end{example}

\begin{example}[$g=1$, node and cusp]\label{ex:g1cusp}
With the same $\tC$ and $P$ of Example~\ref{ex:g1max}, the projection in Theorem~\ref{thm:construction} produces instead a curve $C\subset\PP^3$ with two singular points: the non-split node $Q=(0:0:0:1)$ and the ordinary cusp $Q_\infty=(0:1:0:0)$, so that $\pi=3$. The equations \eqref{eq:I} and \eqref{eq:II} read
\[
w^2+(a+3)u^2=u^3,
\qquad
ut^2+2a\,ut+(3a+2)\,wt=uw+3a\,u^2 .
\]
Here
\[
\#C(\F_{25})=37=25+1+1\cdot\lfloor 2\sqrt{25}\rfloor+(\pi-g)-\delta_{Q_\infty}<38 :
\]
in accordance with Theorem~\ref{thm:construction}(f), the Aubry--Perret bound is missed by exactly $\delta_{Q_\infty}=1$. The geometry of $C$ remains interesting: as shown in Section~\ref{sec:gonality}, the cusp raises the gonality from $2$ to $3$.
\end{example}

\begin{example}[$g=2$, one node, maximal]\label{ex:g2max}
Let $\tC\colon y^2=x^5+1$ over $\F_{81}$, which is maximal with $\#\tC(\F_{81})=118=81+1+2\cdot\lfloor2\sqrt{81}\rfloor$, in accordance with \cite{Taf12}, since $5$ divides $q+1=10$ for $q=9$. Take for $P$ the closed point of degree two whose $x$-coordinates are the roots of
\[
\varphi(x)=x^2-\theta^{68}x+\theta^{30} ,
\]
that is, $\sigma=\theta^{68}$ and $\eta=\theta^{30}$; then $f(\alpha)$ is a square in $\F_{81^2}$ with $\beta\notin\F_{81}$, and the interpolation data are
\[
\lambda(x)=\theta^{50}x+\theta^{70},
\qquad
h(x)=x^3+\theta^{68}x^2+\theta^{51}x+\theta^{40} .
\]
The equations \eqref{eq:I} and \eqref{eq:II} read
\[
\begin{gathered}
w^2-\theta^{68}uw+\theta^{30}u^2=u^3,\\
u^2t^2+\theta^{30}u^2t+\theta^{10}wut=w^3+\theta^{68}w^2u+\theta^{51}wu^2+\theta^{40}u^3 .
\end{gathered}
\]
Here $2\leq g\leq 3$, so Theorem~\ref{thm:construction} applies in full: $C$ has a single non-split node, $\pi=3$, and
\[
\#C(\F_{81})=119=81+1+2\cdot\lfloor 2\sqrt{81}\rfloor+1 ,
\]
so $C$ is a maximal curve over $\F_{81}$; it is trigonal, as shown in Section~\ref{sec:gonality}.
\end{example}

\begin{example}[$g=2$, two nodes]\label{ex:g2twonodes}
Let $\tC\colon y^2=x^5-x+1$ over $\F_3$ and take $\alpha$ with $\alpha^2=-1$, so that $\varphi(x)=x^2+1$; then $f(\alpha)=1$ and $\beta=1\in\F_3$, and we are in Theorem~\ref{thm:construction}, case (d): the curve $C$ has the two non-split nodes $Q=(0:0:0:1)$ and $Q^-=(0:0:1:1)$, with
\[
\sigma=0,\qquad
\eta=1,\qquad
\lambda(x)=1,\qquad
h(x)=x^3-x ,
\]
and $\pi=g+2=4$. The equations \eqref{eq:I} and \eqref{eq:II} read
\[
w^2+u^2=u^3,
\qquad
u^2t^2+2u^2t=w^3+2wu^2 .
\]
Here $\tC$ is not maximal over $\F_3$ --- $\#\tC(\F_3)=7$ --- so no maximality is claimed: $\#C(\F_3)=9$. This curve reappears in Section~\ref{sec:gonality}, where its gonality is shown to be $3$; a companion curve over $\F_3$, tetragonal, is treated there as well.
\end{example}

\begin{remark}\label{rem:cased}
It is natural to ask for examples of case (d) in Theorem~\ref{thm:construction} (or of the corresponding input data in Theorems~\ref{thm:elliptic} and~\ref{thm:cone}) with $\tC$ \emph{maximal} over the ground field. Exhaustive computer searches found none: neither for $y^2=x^3+1$ and $y^2=x^5+x$ over $\F_{25}$, nor for $y^2=x^5+1$ and several maximal quintics $y^2=x^5+cx+d$ over $\F_{81}$, does there exist a degree-two point with $\beta\in\F_q$. For $y^2=x^3+1$ over $\F_{25}$ this can be proved directly: $\beta\in\F_{25}$ would force $\alpha^3\in\F_{25}$, hence $\alpha^{72}=1$; but the group of $72$nd roots of unity in $\F_{625}^{\times}$ has order $\gcd(72,624)=24$ and is contained in $\F_{25}^{\times}$, contradicting $\alpha\notin\F_{25}$. We do not know whether case (d) can occur when $\tC$ is maximal over $\F_q$.
\end{remark}

\section{Gonality of curves}\label{sec:gonality}

In this final section we determine the gonality of the curves constructed in Section~\ref{sec:construction}. Over an algebraically closed field, a base-point-free pencil of degree $d$ on a curve $X$ induces a morphism of degree $d$ onto $\PP^1$, and the gonality of $X$ is the smallest such $d$. Rosa-Stöhr extended this idea in \cite{RS02} by considering pencils which possibly admit \emph{non-removable} base points. In particular, these pencils do not induce morphisms to $\mathbb{P}^1$. The theory of linear series underlying this approach was developed in subsequent articles, for instance, by Altman-Kleiman \cite{AK}, Hartshorne \cite{H86}, Stöhr \cite{St93}, and Carvalho \cite{Car99}. Essentially, the idea is to replace invertible sheaves by torsion free sheaves of rank 1. In \cite{St93,Car99,RS02}, those are considered as fractional ideal sheaves, and written as ``by product'' divisors. This is precisely the approach we adopt here, applied to curves over finite fields.

There are many reasons why this extended notion of linear series is more appropriate for curves, of which we mention two. First, it captures more accurately the geometry hidden on certain phenomena. For instance: it is well known that a smooth (canonical) curve of genus $g$  is trigonal if and only if it lies on a (smooth) surface scroll in $\mathbb{P}^{g-1}$ whose rulings cut out the $g^1_3$. By relaxing the smoothness hypothesis on the scroll Rosa-Stöhr noticed that this remains true for Gorenstein singular curves. So they found trigonal curves lying on cones such that the rulings cut out a $g^1_3$ with an irremovable base point, that is, its vertex. 

Another (intrinsic) reason is the well-known bound $\gon(X)\leq g/2+1$ for smooth curves $X$, which is a consequence of the non-emptiness of the Brill-Noether variety. If $X$ is singular and gonality is defined in terms of morphisms then it is not difficult to find examples violating this bound. On the other hand, within this broader context, the bound remains an open problem, already proved in several particular cases.

\subsection{Divisors, linear systems and gonality}\label{subsec:linsys}

Let $X$ be a complete, geometrically irreducible curve over a field $k$, not necessarily algebraically closed, with normalization $\tX$. A \emph{divisor} on $X$ --- equivalently, a quasi-coherent fractional ideal sheaf on $X$ --- is a product
\[
D=\prod_{Q\in X}D_Q ,
\]
where each $D_Q\subset k(X)$ is a fractional ideal of the local ring $\OO_{X,Q}$ and $D_Q=\OO_{X,Q}$ for almost all closed points $Q$. The structure sheaf is $\OO=\prod_Q\OO_{X,Q}$. Given divisors $D$ and $E$ we set $D\cdot E:=\prod_QD_QE_Q$, and we write $D\geq E$ when $D_Q\supseteq E_Q$ for every $Q$. The \emph{principal divisor} of $h\in k(X)^{\times}$ is $\divi(h):=\prod_Qh^{-1}\OO_{X,Q}$, two divisors being \emph{linearly equivalent}, $D\sim E$, when $D=E\cdot\divi(h)$ for some $h$; and
\[
H^0(D):=\bigcap_{Q\in X}D_Q
\]
is the $k$-vector space of global sections of $D$. The \emph{local degree} at $Q$ is normalized by $\deg_Q\OO_{X,Q}=0$ and determined by $\deg_QD_Q-\deg_QE_Q=\dim_k\bigl(D_Q/E_Q\bigr)$ whenever $D_Q\supseteq E_Q$; the \emph{degree} of $D$ is $\deg_XD:=\sum_Q\deg_QD_Q$, a finite sum. Exactly as for smooth curves, principal divisors have degree zero, so that linearly equivalent divisors have equal degrees.

A \emph{linear system} of dimension $r$ and degree $d$ on $X$, a $g^r_d$, is a family
\[
\mathcal L=\bigl\{\divi(s)\cdot D\;:\;s\in V\setminus\{0\}\bigr\},
\]
where $D$ is a divisor with $\deg_XD=d$ and $V\subseteq H^0(D)$ is a $k$-subspace with $\dim_kV=r+1$; note that $\divi(s)\cdot D=\prod_Qs^{-1}D_Q\geq\OO$ for every $s\in H^0(D)\setminus\{0\}$. If $s_0,\dots,s_r$ is a basis of $V$, the divisor
\[
E:=\prod_{Q\in X}\bigl(s_0\OO_{X,Q}+\dots+s_r\OO_{X,Q}\bigr)
\]
satisfies $E\leq D$ and is minimal with the property $V\subseteq H^0(E)$ (cf.\ \cite[p.~156]{Car99}); a point $Q$ is a \emph{base point} of $\mathcal L$ when $E_Q\subsetneq D_Q$, and $\mathcal L$ is \emph{base-point-free} when $E=D$. The \emph{gonality} of $X$ is
\[
\gon(X):=\min\bigl\{d\;:\;\text{there exists a }g^1_d\text{ on }X\bigr\}.
\]

\begin{remark}\label{rem:closedfield}
When $k$ is algebraically closed, a base-point-free $g^1_d$ induces a morphism $X\to\PP^1$ of degree $d$, and one recovers the familiar geometric notion of gonality (see \cite{Car99}). Over a finite field this interpretation does not make sense in general, and the definition above, intrinsic to the structure sheaf, is the appropriate one; this is precisely the case we are interested in.
\end{remark}

\begin{lemma}\label{lem:normalized}
Let $d=\gon(X)$. Then there exists $f\in k(X)\setminus k$ such that the pencil
\[
\bigl\{\divi(s)\cdot\OO\langle 1,f\rangle\;:\;s\in k+kf\bigr\},
\qquad
\OO\langle 1,f\rangle:=\prod_{Q\in X}\bigl(\OO_{X,Q}+f\OO_{X,Q}\bigr),
\]
is a $g^1_d$. Consequently,
\begin{equation}\label{eq:gonmin}
\gon(X)=\min\bigl\{\deg_X\OO\langle 1,f\rangle\;:\;f\in k(X)\setminus k\bigr\}.
\end{equation}
\end{lemma}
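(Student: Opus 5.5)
Starting from a $g^1_d$ with $d=\gon(X)$, I will normalize it to the form $\OO\langle 1,f\rangle$ without increasing the degree. Write the pencil as $\{\divi(s)\cdot D : s\in V\setminus\{0\}\}$, where $\deg_X D=d$ and $V\subseteq H^0(D)$ is two-dimensional with basis $s_0,s_1$. The plan has three steps.

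First, pass to the divisor $E=\prod_Q(s_0\OO_{X,Q}+s_1\OO_{X,Q})$ recalled above. It satisfies $E\leq D$, so $E_Q\subseteq D_Q$ for all $Q$. By the definition of local degree, $\deg_X E\leq\deg_X D=d$, and $V\subseteq H^0(E)$ still holds. Thus $\{\divi(s)\cdot E: s\in V\setminus\{0\}\}$ is a $g^1_{d'}$ with $d'\leq d$. Minimality of $d$ forces $d'=d$, so we may assume $D=E$.

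Second, twist by $\divi(s_0)$. Set $f:=s_1/s_0$. Since $s_0,s_1$ are linearly independent over $k$, we have $f\in k(X)\setminus k$. Then
\[
\divi(s_0)\cdot E=\prod_Q s_0^{-1}(s_0\OO_{X,Q}+s_1\OO_{X,Q})=\prod_Q(\OO_{X,Q}+f\OO_{X,Q})=\OO\langle 1,f\rangle .
\]
Principal divisors have degree zero and degree is additive under products with principal divisors: locally $h^{-1}$ times a fractional ideal shifts $\deg_Q$ by $\deg_Q(h^{-1}\OO_{X,Q})$, which follows from comparing both with a common smaller ideal. Hence $\deg_X\OO\langle 1,f\rangle=\deg_X E=d$. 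Moreover $s_0^{-1}V=k+kf\subseteq H^0(\OO\langle 1,f\rangle)$, because $1,f\in\OO_{X,Q}+f\OO_{X,Q}$ for every $Q$. The members are the same divisors, since $\divi(s/s_0)\cdot\divi(s_0)\cdot E=\divi(s)\cdot E$. So the displayed pencil is a $g^1_d$, as claimed.

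Third, for \eqref{eq:gonmin}, note that for every $f\in k(X)\setminus k$ the family $\{\divi(s)\cdot\OO\langle1,f\rangle : s\in k+kf\}$ is a $g^1$ of degree $\deg_X\OO\langle 1,f\rangle$. This degree is finite because $\OO_{X,Q}+f\OO_{X,Q}=\OO_{X,Q}$ for almost all $Q$. Hence the minimum on the right-hand side is at least $\gon(X)$, and the first part shows that $\gon(X)$ is attained.

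The only mildly delicate point is the degree bookkeeping: the monotonicity $\deg_X E\leq\deg_X D$ for $E\leq D$, and the invariance of degree under multiplication by $\divi(s_0)$. Both reduce to the local definition $\deg_Q D_Q-\deg_Q E_Q=\dim_k(D_Q/E_Q)$, together with the already-stated fact that principal divisors have degree zero. I would verify them by choosing, at each of the finitely many relevant points, a fractional ideal contained in both ideals being compared.
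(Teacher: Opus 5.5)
Your proof is correct and follows the paper's argument. You replace $D$ by $E=\prod_Q(s_0\OO_{X,Q}+s_1\OO_{X,Q})$, use minimality of $d$ to get $\deg_X E=d$, and twist by $\divi(s_0)$ to reach $\OO\langle 1,f\rangle$ with $f=s_1/s_0$; you additionally make explicit the inequality $\geq$ in \eqref{eq:gonmin}, and you state the twist as $\OO\langle1,f\rangle=\divi(s_0)\cdot E$ with the correct sign, where the paper's written $E\cdot\divi(g^{-1})$ has a sign slip.
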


\begin{proof}
Let $\{\divi(s)\cdot D:s\in V\setminus\{0\}\}$ be a $g^1_d$ and let $f,h$ be a basis of $V$. The divisor $E=\prod_Q(g\OO_{X,Q}+h\OO_{X,Q})$ satisfies $E\leq D$, hence $\deg_XE\leq d$, and $V\subseteq H^0(E)$, so that $\{\divi(s)\cdot E\}$ is a linear system of dimension $1$ and degree $\deg_XE$. By the minimality in the definition of the gonality, $\deg_XE=d$.

Take $f:=h/g$, which is not in $k$ since $g,h$ are $k$-linearly independent. From $g\OO_{X,Q}+h\OO_{X,Q}=g\,(\OO_{X,Q}+f\OO_{X,Q})$ for every $Q$ we get
\[
\OO\langle 1,f\rangle=E\cdot\divi(g^{-1})\sim E ,
\]
so $\deg_X\OO\langle1,f\rangle=d$, and the substitution $s\mapsto gs$ identifies the pencil $\{\divi(s)\cdot\OO\langle1,f\rangle:s\in k+kf\}$ with the $g^1_d$ associated to $E$. 
As for \eqref{eq:gonmin}, this is clear from the first part.
\end{proof}

The next result computes the right-hand side of \eqref{eq:gonmin} for the curves in this paper.

\begin{proposition}\label{prop:degformula2}
\label{prop:degformula}
Let $X$ be a curve over $\F_q$ whose singularities are either non-split or rational unibranch with rational branch, as in Definition~\ref{def:semigroup}. For $Q\in\Sing X$ let $P\in\tX$ be the branch of $X$ at $Q$. Then, for every $f\in\F_q(X)\setminus\F_q$ one has
\begin{equation}
\label{eq:degformula}
\deg_X\OO\langle 1,f\rangle
=\deg_{\tX}\bigl(\divi_\infty(f)\bigr)
+\sum_{Q\in\Sing X}\varepsilon_Q(f),
\end{equation}
where $\divi_\infty(f)$ is the pole divisor of $f$ on $\tX$, 
\begin{equation*}
\varepsilon_Q(f)=
\begin{cases}
1 & \text{if } f\in\OO_{\tX,P}\setminus\OO_{X,Q},\\[2pt]
0 & \text{otherwise},
\end{cases}
\end{equation*}
if $Q$ is non-split, 
while
\[
\varepsilon_Q(f):=\deg_Q\bigl(\OO_{X,Q}+f\OO_{X,Q}\bigr)-\max\bigl(0,-v_P(f)\bigr),
\qquad
\bigl|\varepsilon_Q(f)\bigr|\leq\delta_Q,
\] 
if $Q$ is rational unibranch with rational branch. 
In particular $\deg_{\tX}(\divi_\infty(f))\leq\deg_X\OO\langle1,f\rangle$, in accordance with \cite[Theorem~2.2]{Car99}.
\end{proposition}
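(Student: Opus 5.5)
We compute the degree $\deg_X\OO\langle1,f\rangle=\sum_Q\deg_Q(\OO_{X,Q}+f\OO_{X,Q})$ point by point, splitting the sum into smooth points and singular points. Since $\OO_{X,Q}+f\OO_{X,Q}\supseteq\OO_{X,Q}$, each local degree equals $\dim_{\F_q}(\OO_{X,Q}+f\OO_{X,Q})/\OO_{X,Q}$.

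\emph{Smooth points.} Here $\OO_{X,Q}=\OO_{\tX,P}$ is a discrete valuation ring with residue field of degree $\deg P$ over $\F_q$. If $v_P(f)\geq0$ the contribution is $0$. Otherwise $\OO_{X,Q}+f\OO_{X,Q}=f\OO_{\tX,P}$, and its colength over $\OO_{\tX,P}$ is $-v_P(f)\deg P$. This is exactly the coefficient of $P$ in $\divi_\infty(f)$ weighted by $\deg P$.

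\emph{Non-split points.} Let $Q$ be non-split with branch $P$, so $\OO_{X,Q}=\F_q+\mm_{\tX,P}$ by Lemma~\ref{lem:localchar}.
\begin{itemize}
\item If $f\in\OO_{X,Q}$, the contribution is $0=\varepsilon_Q(f)$, and $f$ has no pole at $P$.
\item If $f\in\OO_{\tX,P}\setminus\OO_{X,Q}$, then $\OO_{X,Q}+f\OO_{X,Q}\subseteq\OO_{\tX,P}$ strictly contains $\OO_{X,Q}$. Since $\OO_{\tX,P}/\OO_{X,Q}$ is one-dimensional, the quotient $(\OO_{X,Q}+f\OO_{X,Q})/\OO_{X,Q}$ has dimension $1$. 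This gives $\varepsilon_Q(f)=1$ with no pole.
\item If $v_P(f)=-n<0$, we show $\OO_{X,Q}+f\OO_{X,Q}=f\OO_{\tX,P}$. Multiplying by $f^{-1}$, it is equivalent that $f^{-1}\OO_{X,Q}+\OO_{X,Q}=\OO_{\tX,P}$. Because $f^{-1}\in\mm_{\tX,P}\subseteq\OO_{X,Q}$, the left side is just $\OO_{X,Q}$, so this fails. We argue directly instead. We have $f\mm_{\tX,P}\subseteq f\OO_{X,Q}$. Moreover $f\cdot\F_q+f\mm_{\tX,P}$ together with $\OO_{X,Q}\supseteq\mm_{\tX,P}$ spans every valuation level in $[-n,0]$ except possibly the top class at level $0$. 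The sum contains $\mm_{\tX,P}$, $f\mm_{\tX,P}\supseteq\mm^{\,1-n}$, and $f$. Hence at level $-n$ we get $\F_{q^2}$ from $f\mm$ only when $n\geq2$, and at level $0$ we get $\F_q$ from $\OO_{X,Q}$ plus whatever $f\mm$ contributes. Since $f\mm\supseteq\mm^{1-n}\supseteq\OO_{\tX,P}$ when $n\geq1$, the sum equals $f\OO_{X,Q}+\OO_{\tX,P}$. Its only missing piece relative to $f\OO_{\tX,P}$ is one $\F_q$-dimension at level $-n$, namely $f\F_{q^2}$ versus $f\F_q$.
\end{itemize}
The count in the pole case is where the bookkeeping has to be checked most carefully; I expect this step to be the main obstacle. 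Counting dimensions, $\dim(\OO_{X,Q}+f\OO_{X,Q})/\OO_{X,Q}$ equals $\dim f\OO_{\tX,P}/\OO_{X,Q}-1=(2n+1)-1=2n=\deg P\cdot n$. So the contribution is the pole coefficient, with $\varepsilon_Q(f)=0$, consistent with the definition.

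\emph{Rational unibranch points with rational branch.} By definition $\varepsilon_Q(f)=\deg_Q(\OO_{X,Q}+f\OO_{X,Q})-\max(0,-v_P(f))$, and since $\deg P=1$ the second term is the pole coefficient. So the identity is tautological at these points. The bound $|\varepsilon_Q(f)|\leq\delta_Q$ is precisely the last inequality of Lemma~\ref{lem:gaps}.

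\emph{Conclusion.} Summing all contributions gives \eqref{eq:degformula}. The final inequality $\deg_{\tX}(\divi_\infty(f))\leq\deg_X\OO\langle1,f\rangle$ needs $\varepsilon_Q(f)\geq0$, i.e. $\dim\geq n$. For non-split points this holds because $\varepsilon_Q(f)\in\{0,1\}$. For unibranch points I would argue that $\OO_{X,Q}+f\OO_{X,Q}\supseteq\OO_{X,Q}$ together with the elements $f\cdot o$, whose valuations $-n+s$ for $s\in S(Q)$ hit every negative level. I would get this via the semigroup counting in Lemma~\ref{lem:gaps}, or simply by citing \cite[Theorem~2.2]{Car99}.
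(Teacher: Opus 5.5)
Your argument follows the paper's proof. It uses the same point-by-point computation and the same three cases at a non-split point. In the pole case you arrive at the paper's chain $\OO_{X,Q}\subset\OO_{\tX,P}\subseteq f\OO_{X,Q}\subset f\OO_{\tX,P}$, with codimensions $1$, $2n-1$, $1$. Your handling of $f\in\OO_{\tX,P}\setminus\OO_{X,Q}$ is slightly more economical than the paper's: the sum lies strictly between two spaces that differ by one dimension.

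Two points need repair.

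\textbf{The pole case.} The level-by-level sentences are wrong as written. The deficient level is $-n$, not $0$, and $f\mm_{\tX,P}=\mm_{\tX,P}^{1-n}$ never reaches level $-n$. Replace them with the short argument:
\begin{itemize}
\item $f^{-1}\in\mm_{\tX,P}\subseteq\OO_{X,Q}$ gives $\OO_{X,Q}+f\OO_{X,Q}=f\OO_{X,Q}=\F_qf+\mm_{\tX,P}^{1-n}$, which contains $\OO_{\tX,P}$.
\item Multiplication by $f$ gives $f\OO_{\tX,P}/f\OO_{X,Q}\cong\OO_{\tX,P}/\OO_{X,Q}$, which has dimension one.
\end{itemize}

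\textbf{Nonnegativity at rational unibranch points.} Your sketch of $\varepsilon_Q(f)\geq 0$ fails as stated. The values $-n+s$, $s\in S(Q)$, need not hit every negative level: for $S(Q)=\langle 2,3\rangle$ and $n=2$ they miss $-1$. The count in Lemma~\ref{lem:gaps} only gives $\varepsilon_Q(f)\geq-\delta_Q$.

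What compensates are the positive values $-n+s\notin S(Q)$. Writing $S=S(Q)$ and $n\geq1$,
\[
\#\bigl((-n+S)\setminus S\bigr)=\#\bigl(S\setminus(n+S)\bigr)=n+\#\bigl((n+S)\setminus S\bigr)\geq n .
\]
The middle equality holds because $\mathbb{N}\setminus(n+S)$ and $\mathbb{N}\setminus S$ have $n+\delta_Q$ and $\delta_Q$ elements respectively. Since $S(\OO_{X,Q}+f\OO_{X,Q})\supseteq -n+S$, this yields $\varepsilon_Q(f)\geq 0$. Alternatively, cite \cite[Theorem~2.2]{Car99}; that citation is all the paper offers for this final clause.
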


\begin{proof}
We compute $\deg_Q\bigl(\OO_{X,Q}+f\OO_{X,Q}\bigr)$ at every point $Q\in X$.

Suppose first that $Q$ is a smooth point, so that $\OO_{X,Q}$ is a discrete valuation ring with valuation $v_Q$ and residue field of degree $\deg Q$ over $\F_q$. If $v_Q(f)\geq0$, then $\OO_{X,Q}+f\OO_{X,Q}=\OO_{X,Q}$ and the local degree is $0$. If $v_Q(f)=-n<0$, then $\OO_{X,Q}+f\OO_{X,Q}=f\OO_{X,Q}=t^{-n}\OO_{X,Q}$ for a local parameter $t$, and
\[
\deg_Q\bigl(f\OO_{X,Q}\bigr)=\dim_{\F_q}\bigl(t^{-n}\OO_{X,Q}\big/\OO_{X,Q}\bigr)=n\deg Q .
\]
Summing these local degrees over the smooth points of $X$, we obtain the contribution of all poles of $f$ lying over the smooth locus to $\deg_{\tX}(\divi_\infty(f))$, each pole counted with its multiplicity and degree.

Now let $Q$ be a non-split singularity with branch $P$. Write $\mm_P:=\mm_{\tX,P}$, $\OO_P:=\OO_{\tX,P}$, and  $\OO_Q:=\OO_{X,Q}=\F_q+\mm_P$, so that $\OO_Q\subset\OO_P$. Set $D_Q:=\OO_Q+f\OO_Q$. Recall that $\deg P=2$, so the residue field of $\OO_P$ is $\F_{q^2}$. Three cases occur.

If $f\in\OO_Q$, then $D_Q=\OO_Q$ and $\deg_QD_Q=0$; moreover $v_P(f)\geq0$, so $P$ does not occur in $\divi_\infty(f)$. This matches \eqref{eq:degformula} with $\varepsilon_Q(f)=0$.

If $f\in\OO_P\setminus\OO_Q$, then $f +\mm_P \in\F_{q}(P)\setminus\F_q$. Since $f\mm_P\subseteq\mm_P\subseteq\OO_Q$, we get $D_Q=\OO_Q+\F_q f=\F_q+\F_qf+\mm_P=\OO_P$, because $\{1+\mm_P,f+\mm_P\}$ is an $\F_q$-basis of $\F_{q}(P)=\OO_P/\mm_P$. Hence
\[
\deg_QD_Q=\dim_{\F_q}\bigl(\OO_P/\OO_Q\bigr)=\delta_Q=1 ,
\]
while again $P$ does not occur in $\divi_\infty(f)$: this is the case $\varepsilon_Q(f)=1$.

If $v_P(f)=-n<0$, then $f^{-1}\in\mm_P\subseteq\OO_Q$, so $\OO_Q=f\cdot f^{-1}\OO_Q\subseteq f\OO_Q$ and $D_Q=f\OO_Q$. We claim that $\OO_Q\subseteq\OO_P\subseteq f\OO_Q\subseteq f\OO_P$: indeed $f\OO_Q=\F_qf+f\mm_P=\F_qf+\mm_P^{\,1-n}\supseteq\mm_P^{\,1-n}\supseteq\OO_P$, since $1-n\leq0$. As $\dim_{\F_q}(\OO_P/\OO_Q)=\dim_{\F_q}(f\OO_P/f\OO_Q)=1$ and $\dim_{\F_q}(f\OO_P/\OO_P)=n\deg P=2n$, we conclude
\[
\deg_QD_Q=\dim_{\F_q}\bigl(f\OO_Q/\OO_Q\bigr)
=\dim_{\F_q}\bigl(f\OO_Q/\OO_P\bigr)+1
=(2n-1)+1
=n\deg P ,
\]
which is exactly the contribution of $P$, with multiplicity $n$, to $\deg_{\tX}(\divi_\infty(f))$: again $\varepsilon_Q(f)=0$.

At a rational unibranch singular point $Q$, the pole of $f$ at the branch $P$ contributes $\max(0,-v_P(f))\cdot\deg P=\max(0,-v_P(f))$ to $\deg_{\tX}(\divi_\infty(f))$, so the discrepancy at $Q$ is precisely the stated $\varepsilon_Q(f)$, and the bound $|\varepsilon_Q(f)|\leq\delta_Q$ is given by Lemma~\ref{lem:gaps}.
\end{proof}

\subsection{Regularity at the nodes}\label{subsec:regularity}

We now return to the curve $C\subset\PP^3$ in Theorem~\ref{thm:construction}, with function field $K=\F_q(x,y)$, non-split nodes $Q$ (and $Q^-$, when $\beta\in\F_q$) and branches $P$ (and $P^-$).

\begin{lemma}\label{lem:regularity}
A function $f\in K$ belongs to $\OO_{C,Q}$ if and only if $f\in\OO_{\tC,P}$ and $f\bmod\mm_P\in\F_q$. The same holds at $Q^-$, with $P^-$ in place of $P$.
\end{lemma}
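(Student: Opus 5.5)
The statement is essentially a reformulation of the identity $\OO_{C,Q}=\F_q+\mm_{\tC,P}$ established in Step~7 of the proof of Theorem~\ref{thm:construction} (and, for any non-split singularity, in Lemma~\ref{lem:localchar}). So the plan is to translate that equality of subrings of $K$ into the membership criterion in the statement, using only the structure of the residue field $\OO_{\tC,P}/\mm_P\cong\F_{q^2}$.

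First I will record that, by Theorem~\ref{thm:construction}(c)--(d), $Q$ is a non-split singularity with unique branch $P$, and likewise $Q^-$ with branch $P^-$ when $\beta\in\F_q$. Lemma~\ref{lem:localchar} then gives $\OO_{C,Q}=\F_q+\mm_{\tC,P}$ inside $\OO_{\tC,P}$; this was in fact derived directly in Step~7.

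For the forward implication, take $f\in\OO_{C,Q}$. Then $f\in\OO_{\tC,P}$, because $\OO_{C,Q}\subseteq\tO_Q=\OO_{\tC,P}$ by unibranchness. Writing $f=c+m$ with $c\in\F_q$ and $m\in\mm_P$, we get $f\bmod\mm_P=c\in\F_q$. For the converse, suppose $f\in\OO_{\tC,P}$ and $f\bmod\mm_P=c\in\F_q$, where $\F_q$ is viewed inside the residue field $\F_{q^2}$ through the constants. Then $f-c\in\mm_P$, so $f\in\F_q+\mm_P=\OO_{C,Q}$.

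The argument at $Q^-$ is the same, using $\OO_{C,Q^-}=\F_q+\mm_{\tC,P^-}$ from Step~7. I do not expect a real obstacle. The only point needing care is that the lemma tests membership for arbitrary $f\in K$, so the condition $f\in\OO_{\tC,P}$ must be justified as necessary. That follows because $\OO_{C,Q}$ is contained in its integral closure, which equals $\OO_{\tC,P}$ since $\nu^{-1}(Q)=\{P\}$ (Step~6).
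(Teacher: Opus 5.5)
Your proposal is correct and is essentially the paper's proof: both reduce the claim to the identity $\OO_{C,Q}=\F_q+\mm_{\tC,P}$ of Lemma~\ref{lem:localchar} (established directly in Step~7) and then read membership off the residue class of $f$ modulo $\mm_P$. Your version subtracts only a constant $c\in\F_q$, which is a constant of $K$, whereas the paper writes $f=c+m$ with $c$ in the residue field $\F_q(P)$, which is not literally an element of $K$; yours is therefore slightly cleaner.
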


\begin{proof}
Immediate from $\OO_{C,Q}=\F_q+\mm_P$ (Lemma~\ref{lem:localchar}): an element $f=c+m\in\OO_{\tC,P}$, with $c=f\bmod\mm_P\in\F_{q}(P)$ and $m\in\mm_P$, lies in $\F_q+\mm_P$ if and only if $c\in\F_q$.
\end{proof}

\begin{corollary}\label{cor:standardfunctions}
For the coordinate functions of the model \eqref{eq:model}:
\begin{enumerate}
\item[(i)] $x\notin\OO_{C,Q}$, and $x\notin\OO_{C,Q^-}$ when $\beta\in\F_q$, since $x\bmod\mm_P=\alpha\notin\F_q$, if $\mathbb{F}_q(P)$ is viewed inside $\mathbb{F}_{q^2}(P_1)$, and likewise at $P^-$;
\item[(ii)] $y\in\OO_{C,Q}$ if and only if $\beta\in\F_q$, since $y\bmod\mm_P=\beta$, if $\mathbb{F}_q(P)$ is viewed inside $\mathbb{F}_{q^2}(P_1)$; in that case also $y\in\OO_{C,Q^-}$, since $y\bmod\mm_{P^-}=-\beta\in\F_q$;
\item[(iii)] $u=\varphi(x)$ belongs to $\OO_{C,Q}$, and to $\OO_{C,Q^-}$ when $\beta\in\F_q$: indeed $u\bmod\mm_P=\varphi(\alpha)=0$, so that $u\in\mm_P\subset\OO_{C,Q}$, and likewise at $P^-$.
\end{enumerate}
Moreover, by \eqref{eq:valinfty} the functions $x$, $y$ and $u$ are regular on the affine chart of $\tC$ and
\[
\divi_\infty(x)=2P_\infty,
\qquad
\divi_\infty(y)=(2g+1)P_\infty,
\qquad
\divi_\infty(u)=4P_\infty .
\]
\end{corollary}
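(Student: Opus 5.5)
The corollary is a direct consequence of Lemma~\ref{lem:regularity}, together with two facts established earlier: the residues of the coordinate functions at $P$, and the pole orders in \eqref{eq:valinfty}.

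First, I fix the identification of residue fields. We view $\F_q(P)=\OO_{\tC,P}/\mm_P$ inside $\F_{q^2}$ through the geometric point $P_1=(\alpha,\beta)$ lying over $P$. Under this embedding, the class of $x$ goes to $\alpha$ and the class of $y$ goes to $\beta$. The same embedding sends $\F_q\subset\F_q(P)$ onto $\F_q\subset\F_{q^2}$, so membership of a residue in $\F_q$ does not depend on the choice of embedding. At $P^-$ we use the point $P_1^-=(\alpha,-\beta)$ in the same way.

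The three items then follow by applying Lemma~\ref{lem:regularity} to each function.
\begin{enumerate}
\item[(i)] The function $x$ is regular at $P$, with residue $\alpha\notin\F_q$, so $x\notin\OO_{C,Q}$. Likewise at $P^-$ when $\beta\in\F_q$, since $Q^-$ exists only in that case.
\item[(ii)] The function $y$ is regular at $P$, with residue $\beta$. Hence $y\in\OO_{C,Q}$ if and only if $\beta\in\F_q$. In that case the residue of $y$ at $P^-$ is $-\beta\in\F_q$, so also $y\in\OO_{C,Q^-}$.
\item[(iii)] The residue of $u$ is $\varphi(\alpha)=0$, so $u\in\mm_P\subseteq\F_q+\mm_P=\OO_{C,Q}$. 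Since $x(P^-)=\alpha$ as well, the same argument gives $u\in\OO_{C,Q^-}$.
\end{enumerate}

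For the final assertion, $x$, $y$ and $u=\varphi(x)$ are polynomials in the affine coordinates, so they have no poles on the affine chart of $\tC$. Their only possible pole is therefore at $P_\infty$, which has degree one. By \eqref{eq:valinfty}, $v_{P_\infty}(x)=-2$ and $v_{P_\infty}(y)=-(2g+1)$. Since $\deg\varphi=2$, this gives $v_{P_\infty}(u)=2\,v_{P_\infty}(x)=-4$. Reading off these pole orders yields the three displayed pole divisors.

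No step here is a real obstacle. The only point requiring care is the identification of the residue field $\F_q(P)$ with $\F_{q^2}$ via $P_1$, which is handled in the first step above.
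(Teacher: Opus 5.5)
Your proposal is correct and follows the paper's own justification: you apply Lemma~\ref{lem:regularity} to the residues $\alpha$, $\beta$ (resp.\ $-\beta$) and $\varphi(\alpha)=0$ of $x$, $y$, $u$ at $P$ (resp.\ $P^-$), and read the pole divisors off \eqref{eq:valinfty}, with the strict triangle inequality giving $v_{P_\infty}(u)=-4$. Your remark that whether a residue lies in $\F_q$ does not depend on the chosen $\F_q$-embedding of $\F_q(P)$ into $\F_{q^2}$ makes explicit what the paper's phrase ``viewed inside $\F_{q^2}(P_1)$'' leaves implicit.
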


The following elementary observation links the vanishing of the local terms $\varepsilon_Q$ to linear equivalences on $\tC$; it will be used repeatedly.

\begin{lemma}\label{lem:linkage}
Let $Q'\in\{Q,Q^-\}$ be a non-split node of $C$, with branch $P'\in\tC$, and let $f\in K\setminus\F_q$ satisfying $\varepsilon_{Q'}(f)=0$. Then
\[
\divi_\infty(f)\sim P'+E
\]
for some effective divisor $E$ on $\tC$ of degree $\deg\divi_\infty(f)-2$.
\end{lemma}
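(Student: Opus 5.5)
By the definition of $\varepsilon_{Q'}$ in Proposition~\ref{prop:degformula}, the hypothesis $\varepsilon_{Q'}(f)=0$ means that $f\notin\OO_{\tC,P'}\setminus\OO_{C,Q'}$. So exactly one of two things happens: either $f\in\OO_{C,Q'}$, or $v_{P'}(f)<0$. I will treat these two cases separately. In each case the idea is to write down an explicit effective divisor containing $P'$ that is linearly equivalent to $\divi_\infty(f)$. Throughout I use $\deg P'=2$ (Definition~\ref{def:nonsplit}) and the fact that principal divisors on the smooth curve $\tC$ have degree zero.

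\emph{Case $v_{P'}(f)<0$.} Here $P'$ occurs in $\divi_\infty(f)$ with multiplicity at least one. Hence
\[
E:=\divi_\infty(f)-P'
\]
is effective, and it has degree $\deg\divi_\infty(f)-2$. Then $\divi_\infty(f)=P'+E$ holds as an equality, and in particular as a linear equivalence.

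\emph{Case $f\in\OO_{C,Q'}$.} By Lemma~\ref{lem:regularity}, $f$ is regular at $P'$ and its residue $c:=f\bmod\mm_{P'}$ lies in $\F_q$. Since $f\notin\F_q$, the function $f-c$ is nonzero, and it vanishes at $P'$. Subtracting a constant does not change the poles, so
\[
\divi_\infty(f-c)=\divi_\infty(f).
\]
The principal divisor of $f-c$ on $\tC$ then gives
\[
\divi_\infty(f)\sim\divi_0(f-c).
\]
Because $v_{P'}(f-c)\geq 1$, we can write $\divi_0(f-c)=P'+E$ with $E$ effective. Comparing degrees, $\deg E=\deg\divi_0(f-c)-2=\deg\divi_\infty(f)-2$.

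There is no real obstacle in either case. The only point needing care is the use of Lemma~\ref{lem:regularity} in the second case: it guarantees that the residue $c$ is rational. This is what allows $c$ to be subtracted while staying inside $K=\F_q(\tC)$, so that $f-c$ is a legitimate rational function over $\F_q$ whose divisor gives the required equivalence.
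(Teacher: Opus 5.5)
Your proposal is correct and follows the paper's proof essentially verbatim. You split into the cases $v_{P'}(f)<0$, where $E:=\divi_\infty(f)-P'$ gives an equality, and $f\in\OO_{C,Q'}$, where you subtract the rational residue $c$ supplied by Lemma~\ref{lem:regularity} and compare $\divi_\infty(f-c)=\divi_\infty(f)$ with $\divi_0(f-c)\geq P'$.
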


\begin{proof}
By definition of $\varepsilon_{Q'}$, either $v_{P'}(f)<0$ or $f\in\OO_{C,Q'}$. In the first case $P'\leq\divi_\infty(f)$, and we may take $E:=\divi_\infty(f)-P'\geq0$; here the equivalence is an equality. In the second case, $c:=f\bmod\mm_{P'}$ lies in $\F_q$ by Lemma~\ref{lem:regularity}, and $f-c$ vanishes at $P'$; since
\[
\deg\divi_0(f-c)=\deg\divi_\infty(f-c)=\deg\divi_\infty(f),
\]
we may write $\divi_0(f-c)=P'+E$ with $E\geq0$ of degree $\deg\divi_\infty(f)-2$, and then
\[
\divi_\infty(f)=\divi_\infty(f-c)\sim\divi_0(f-c)=P'+E .
\qedhere
\]
\end{proof}

\subsection{The gonality theorem}\label{subsec:gonalitythm}

Besides the nodes, the curves with $g=1$ carry the ordinary cusp $Q_\infty$ in Theorem~\ref{thm:construction}(e), whose local terms we now study.

\begin{lemma}\label{lem:cusp}
Let $g=1$ and let $Q_\infty$ be the cusp of the curve $C$ in Theorem~\ref{thm:construction}, so that $S(Q_\infty)=\langle2,3\rangle$, $\delta_{Q_\infty}=1$ and $\OO:=\OO_{C,Q_\infty}=\F_q+\mm_{\tC,P_\infty}^{2}$. Then, for every $f\in K^{\times}$, the local term of Proposition~\ref{prop:degformula2} satisfies $\varepsilon_{Q_\infty}(f)\in\{0,1\}$, and $\varepsilon_{Q_\infty}(f)=1$ whenever $v_{P_\infty}(f)=-1$. Moreover:
\begin{enumerate}
\item[(i)] if $f$ is regular at $P_\infty$, then $\varepsilon_{Q_\infty}(f)=0$ if and only if $v_{P_\infty}\bigl(f-f(P_\infty)\bigr)\neq 1$;
\item[(ii)] $\varepsilon_{Q_\infty}(x)=0$ and $\varepsilon_{Q_\infty}(y)=0$.
\end{enumerate}
\end{lemma}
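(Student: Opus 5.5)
The plan is to compute $\deg_{Q_\infty}(\OO+f\OO)$ directly from the explicit description $\OO=\F_q+\mm^2$, where $\mm:=\mm_{\tC,P_\infty}$, $v:=v_{P_\infty}$ and $\gamma$ is a local parameter at $P_\infty$ (Section~\ref{subsec:degtwo}). The description of $\OO$ was obtained in Step~8 of the proof of Theorem~\ref{thm:construction}. The residue field at $P_\infty$ is $\F_q$. The quantity to evaluate is $\varepsilon_{Q_\infty}(f)=\dim_{\F_q}(\OO+f\OO)/\OO-\max(0,-v(f))$, and I will use Lemma~\ref{lem:gaps} with $S(Q_\infty)=\langle 2,3\rangle=\mathbb{N}\setminus\{1\}$. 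By that lemma, the dimension equals $\#\bigl(S(\OO+f\OO)\setminus S(Q_\infty)\bigr)$, so everything reduces to determining the value set $S(D)$ of $D:=\OO+f\OO$.

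\emph{Case $f$ regular at $P_\infty$.} Write $c:=f(P_\infty)\in\F_q$ and $g_0:=f-c$, so that $D=\OO+g_0\OO$ and $v(g_0)\geq1$. We have $g_0\OO=\F_q g_0+g_0\mm^2\subseteq \F_q g_0+\mm^3$. Hence $S(D)\subseteq \mathbb{N}$, and $S(D)$ contains $1$ if and only if some element of $D$ has valuation $1$. Elements of $\OO$ have valuation $0$ or at least $2$. An element of $D$ is $o+ag_0+m$ with $o\in\OO$, $a\in\F_q$ and $v(m)\geq 3$. Such an element has valuation $1$ exactly when $a\neq0$ and $v(g_0)=1$. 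Thus $S(D)\setminus S(Q_\infty)$ is $\{1\}$ or empty, according as $v(g_0)=1$ or not. Since $\max(0,-v(f))=0$ in this case, this gives $\varepsilon\in\{0,1\}$ and proves (i).

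\emph{Case $v(f)=-n<0$.} Here $f^{-1}\in\mm$, but $f^{-1}$ need not lie in $\OO$, so I will compute directly. We have $f\OO=\F_qf+f\mm^2=\F_q f+\mm^{2-n}$. Therefore $D=\OO+\F_qf+\mm^{2-n}$.

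For $n\geq2$, the term $\mm^{2-n}$ contains $\OO$, so $D=\F_qf+\mm^{2-n}$ and $S(D)=\{-n\}\cup\mathbb{Z}_{\geq 2-n}$. This set contains the gap $1$ and every negative integer $\geq -n$ except $-n+1$. Removing $S(Q_\infty)$ leaves $\{-n\}\cup\{-n+2,\dots,-1\}\cup\{1\}$, of cardinality $n$. So $\varepsilon=0$ for $n\geq 2$.

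For $n=1$, we get $D=\OO+\F_q f+\mm=\F_q f+\OO_{\tC,P_\infty}$, whose value set is $\{-1\}\cup\mathbb{N}$. Removing $S(Q_\infty)$ leaves $\{-1,1\}$, so the dimension is $2$ and $\varepsilon=1$.

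Together the two cases give $\varepsilon\in\{0,1\}$ for all $f$, with $\varepsilon=1$ when $v(f)=-1$. The only care needed is the valuation bookkeeping in the $n=1$ versus $n\geq2$ split, checking that $\mm^{2-n}\supseteq \OO$ exactly when $n\geq 2$.

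\emph{Part (ii).} This follows from the pole-order case. By \eqref{eq:valinfty} with $g=1$, $v(x)=-2$ and $v(y)=-3$, and both values are $\leq -2$, so $\varepsilon_{Q_\infty}(x)=\varepsilon_{Q_\infty}(y)=0$ by the computation for $n\geq2$.
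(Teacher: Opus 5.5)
Your proof is correct, and it reaches the paper's conclusions by a somewhat more explicit route. When $f$ is regular at $P_\infty$ the two arguments essentially coincide: the paper notes that $\varepsilon_{Q_\infty}(f)=\dim_{\F_q}(\OO+f\OO)/\OO$ vanishes if and only if $f\in\F_q+\mm_{\tC,P_\infty}^2$.

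The difference is in the pole case $v_{P_\infty}(f)=-n<0$. The paper never computes $D=\OO+f\OO$ there. It uses only the inclusion $S(D)\supseteq -n+\langle 2,3\rangle$, which gives $\varepsilon_{Q_\infty}(f)\geq 0$, and $\varepsilon_{Q_\infty}(f)=1$ when $n=1$. It combines this with the general bound $|\varepsilon_{Q_\infty}(f)|\leq\delta_{Q_\infty}=1$ from Lemma~\ref{lem:gaps}. It then proves (ii) separately, by an ad hoc upper bound on the value sets of $\OO+x\OO$ and $\OO+y\OO$.

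You instead use $\OO=\F_q+\mm_{\tC,P_\infty}^2$ and $f\mm_{\tC,P_\infty}^2=\mm_{\tC,P_\infty}^{2-n}$ to identify $D$ exactly: it is $\F_qf+\mm_{\tC,P_\infty}^{2-n}$ for $n\geq 2$ and $\F_qf+\tO_{Q_\infty}$ for $n=1$. This gives the sharper fact that $\varepsilon_{Q_\infty}(f)=0$ for every $f$ with a pole of order at least $2$. So you get a complete description of $\varepsilon_{Q_\infty}$, part (ii) becomes a special case, and the bound $|\varepsilon|\leq\delta$ is not needed. The paper's semigroup-only argument, on the other hand, is the template that still works when the local ring is not known explicitly, as for the point at infinity in Remark~\ref{rem:gong4}.

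One wording point in the regular case: ``has valuation $1$ exactly when $a\neq 0$ and $v(g_0)=1$'' is really an ``only if''. The converse is witnessed by $g_0$ itself. Your conclusion about $S(D)$ is nonetheless right.
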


\begin{proof}
Write $v:=v_{P_\infty}$, $S:=\langle2,3\rangle$, $D:=\OO+f\OO$ and $n:=\max(0,-v(f))$, so that $\varepsilon_{Q_\infty}(f)=\deg_{Q_\infty}D-n=\#\bigl(S(D)\setminus S\bigr)-n$ by Lemma~\ref{lem:gaps}, and $|\varepsilon_{Q_\infty}(f)|\leq\delta_{Q_\infty}=1$. 

If $n=0$, then $D\subseteq\tO$ and $\varepsilon_{Q_\infty}(f)=\dim_{\F_q}D/\OO\geq0$; it vanishes if and only if $f\in\OO=\F_q+\mm_{\tC,P_\infty}^{2}$, that is --- the residue field being $\F_q$ --- if and only if $v\bigl(f-f(P_\infty)\bigr)\neq1$. This proves (i).

If $n\geq1$, then, from the proof of Lemma~\ref{lem:gaps},  $S(D)\supseteq\bigl(-n+S\bigr)$, and $(-n+S)\setminus S$ contains the $n-1$ negative values $-n+s$, $s\in S\cap[0,n)$, together with $1=-n+(n+1)$; hence $\varepsilon_{Q_\infty}(f)\geq0$, with the refinement that for $n=1$ the set $(-1+S)\setminus S\supseteq\{-1,1\}$ already gives $\varepsilon_{Q_\infty}(f)=1$.

For (ii): every element of $D$ is of the form $a+fb$ with $a,b\in\OO$. For $f=x$ we have $v(x b)\in-2+S=\{-2,0,1,2,\dots\}$ and $v(a)\in S\subseteq\mathbb{N}$, so a negative valuation can only arise from the term $xb$ with $v(xb)=-2$, in which case $v(a+xb)=-2$; consequently $S(D)\subseteq\mathbb{Z}_{\geq-2}\setminus\{-1\}$ and $\#(S(D)\setminus S)\leq\#\{-2,1\}=2=n$, whence $\varepsilon_{Q_\infty}(x)\leq0$, so $=0$. For $f=y$ the same argument with $v(yb)\in-3+S=\{-3,-1,0,1,\dots\}$ gives $S(D)\subseteq\mathbb{Z}_{\geq-3}\setminus\{-2\}$ and $\#(S(D)\setminus S)\leq\#\{-3,-1,1\}=3=n$, whence $\varepsilon_{Q_\infty}(y)=0$.
\end{proof}

\begin{table}[ht]
\caption{The gonality of the curves of Section~\ref{sec:construction}}\label{tab:gonality}
\small
\begin{tabular}{llcccl}
\hline
curve & case & $\Sing$ & $\pi$ & gon.\ & realized by\\
\hline
$C_{\mathrm e}$ (\ref{thm:elliptic}) & $g=1$, any $\beta$ & $1$ node & $2$ & $2$ & Ex.~\ref{ex:g1max}\\
$C$ (\ref{thm:construction}) & $g=1$, $\beta\notin\F_q$ & node, cusp & $3$ & $3$ & Ex.~\ref{ex:g1cusp}\\
$C$ (\ref{thm:construction}) & $g=1$, $\beta\in\F_q$ & $2$ nodes, cusp & $4$ & $3$ & ---\\
$C$ (\ref{thm:construction}) & $g\in\{2,3\}$, $\beta\notin\F_q$ & $1$ node & $g+1$ & $3$ & Ex.~\ref{ex:g2max}\\
$C$ (\ref{thm:construction}) & $g=2$, $\beta\in\F_q$ & $2$ nodes & $4$ & $3$, $4$ & Ex.~\ref{ex:F3trigonal}, \ref{ex:F3tetragonal}\\
$C$ (\ref{thm:construction}) & $g=3$, $\beta\in\F_q$ & $2$ nodes & $5$ & $4$ & ---\\
$C'$ (\ref{thm:cone}) & $g\geq4$, any $\beta$ & $1$ node & $g+1$ & $3$ & ---\\
\hline
\end{tabular}
\end{table}

\begin{theorem}\label{thm:gonality}
The gonalities of the curves constructed in Section~\ref{sec:construction} are as given in Table~\ref{tab:gonality}. More precisely:
\begin{enumerate}
\item[(a)] the curve $C_{\mathrm e}$ in Theorem~\ref{thm:elliptic} has $\gon(C_{\mathrm e})=2$, for either position of $\beta$;
\item[(b)] the curve $C$ in Theorem~\ref{thm:construction} with $\beta\notin\F_q$ and $2\leq g\leq3$ has $\gon(C)=3$; the same holds for the cone curves $C'$ in Theorem~\ref{thm:cone}, for every $g\geq4$ and either position of $\beta$;
\item[(c)] the curve $C$ in Theorem~\ref{thm:construction} with $g=1$ has $\gon(C)=3$, for either position of $\beta$: the cusp at infinity raises the gonality from $2$ to $3$ when $\beta\notin\F_q$;
\item[(d)] the curve $C$ in Theorem~\ref{thm:construction} with $\beta\in\F_q$ and $g=3$ has $\gon(C)=4$.
\end{enumerate}
For $\beta\in\F_q$ and $g=2$ one has $\gon(C)\in\{3,4\}$, and both values occur; Theorem~\ref{thm:char} below characterizes when each of them does. For the curves in Theorem~\ref{thm:construction} with $g\geq4$ see Remark~\ref{rem:gong4}.
\end{theorem}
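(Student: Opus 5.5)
The plan is to apply Lemma~\ref{lem:normalized} and Proposition~\ref{prop:degformula2} in every case. By \eqref{eq:gonmin}, $\gon(X)=\min_f\bigl(\deg\divi_\infty(f)+\sum_{Q}\varepsilon_Q(f)\bigr)$, where $f$ ranges over $K\setminus\F_q$ and the $\varepsilon_Q\geq0$ (for the cusp this uses Lemma~\ref{lem:cusp}). For each curve I will exhibit an $f$ realizing the claimed value, which gives the upper bound. For the lower bound, I will show that every $f$ with $\deg\divi_\infty(f)$ small is forced to have a positive local term at some singular point. The basic inputs are the following. Corollary~\ref{cor:standardfunctions} gives the local terms of $x$, $y$, $u$. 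Lemma~\ref{lem:cusp} handles $Q_\infty$. Lemma~\ref{lem:linkage} says that a vanishing local term at a node with branch $P'$ forces $\divi_\infty(f)\sim P'+E$ with $E\geq0$ of degree $\deg\divi_\infty(f)-2$.

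\emph{Upper bounds.}
\begin{itemize}
\item For (a), I take $f=t/u$ on $C_{\mathrm e}$. Its poles are at $P^-$ and at $P_\infty$, and its value at $P$ should lie in $\F_q$, so $\varepsilon_Q=0$. I would prefer an $f$ of pole divisor $2P_\infty$, but $x$ has $\varepsilon_Q(x)=1$. So instead I pick $f=(y-c)/(x-e)$-type functions in $L(P_\infty+R)$ with $R$ rational and adjust them to be $\F_q$-valued at $P$. Failing that, I use Riemann--Roch on $\tC$: $\dim L(D)=2$ for $D$ of degree $2$, and the condition ``residue at $P$ in $\F_q$'' is one $\F_q$-linear condition on a $2$-dimensional space of $\F_{q^2}$-values. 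This leaves a nonconstant $f$ with $\deg\divi_\infty(f)=2$ and all local terms $0$.
\item For (b) and (c), I take $f=x$. Then $\deg\divi_\infty(x)=2$, $\varepsilon_Q(x)=1$, and $\varepsilon_{Q_\infty}(x)=0$, which gives degree $3$. For $C'$, the rulings give the $g^1_3$ of Theorem~\ref{thm:cone}(d).
\item For (d), I use $f=y$, or $x$ corrected at both nodes, with the same dimension count applied to a degree-$4$ divisor.
\end{itemize}

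\emph{Lower bounds.} Since $C$ is not rational (we have $g\geq1$), the gonality is at least $2$, which settles (a). For the gonality to be at most $2$ we need $\deg\divi_\infty(f)\leq2$ with all $\varepsilon=0$, or $\deg\divi_\infty(f)=1$, which is impossible as $g\geq1$.
\begin{itemize}
\item In case (b) with $g\geq2$, a degree-$2$ pole divisor is the hyperelliptic $g^1_2$, so $f$ is a Möbius transform of $x$. Regularity at $P$ then fails because $x(P)=\alpha\notin\F_q$ and Möbius maps over $\F_q$ preserve $\F_{q^2}\setminus\F_q$. If $f$ has its pole at $P$ itself, we get $\divi_\infty(f)=P$ of degree $2$, so $f=\mu(x)$ has pole fibre $\{\alpha,\alpha^q\}$; this is impossible since a rational Möbius map has a rational pole.
\item In case (c), I must rule out degree $2$ when $g=1$. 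Lemma~\ref{lem:linkage} forces $\divi_\infty(f)\sim P$, so $f$ has a simple pole-structure whose $\varepsilon_{Q_\infty}$ must vanish. By Lemma~\ref{lem:cusp}(i), $f-f(P_\infty)$ cannot have a simple zero at $P_\infty$, so $P_\infty$ is a ramification point of the degree-$2$ map given by $f$. I expect to use the group law here: $2P_\infty\sim P$ would put $P$ in the fibre of $x$, contradicting $x(P)\notin\F_q$. Otherwise the double cover has $P_\infty$ ramified but not via $x$, which leads to a similar contradiction.
\item For (d), I need to exclude degree $3$. That means either a $g^1_2$ plus one unit of $\varepsilon$, which is excluded as above, or a degree-$3$ pole divisor with both nodes regular. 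By Lemma~\ref{lem:linkage} applied twice, $\divi_\infty(f)\sim P+R\sim P^-+R'$ with $R,R'$ rational. Since $P+P^-\sim2\cdot(\text{fibre of }x)\sim 4P_\infty$, this reduces to a statement about degree-$3$ divisors on a genus-$3$ hyperelliptic curve, all of which have $h^0\leq1$ unless they contain the $g^1_2$. I will check that the $g^1_2$-containing case reduces again to Möbius functions of $x$, which are irregular at $Q$.
\end{itemize}

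The main obstacle is the lower bound in (c) and (d): converting ``$\varepsilon=0$'' into explicit divisor-class relations and showing that these relations are impossible. In (c) this comes down to the precise ramification condition at the cusp together with the group-law argument.
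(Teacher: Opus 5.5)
Your lower bounds are essentially sound, but several of your upper bounds fail: the functions you name do not give the claimed degree, and the fallback dimension count is invalid.

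\textbf{Part (a).} Take $t/u=(y-\lambda)/\varphi$ with $g=1$. It has valuation $-3-(-4)=1$ at $P_\infty$, a zero rather than a pole, so $\divi_\infty(t/u)=P^-$. It is regular at $P$. From $t\,(y+\lambda)=\varphi h$, its value at $P_1$ is $h(\alpha)/(2\beta)$. This need not lie in $\F_q$, and it never does when $\beta\in\F_q$, since $h$ has degree one. In that case $\varepsilon_Q(t/u)=1$ and the degree is $3$.

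Your count of ``one $\F_q$-linear condition on a $2$-dimensional $L(D)$'' produces nothing. The constants already satisfy the condition $f(P)\in\F_q$; for $D=2P_\infty$ the solutions are exactly $\F_q$.

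The missing idea is that a pole at the branch kills the local term. By Riemann--Roch, $\ell(P)=2$. Any $f\in L(P)\setminus\F_q$, for instance $(y+\lambda)/\varphi$ with pole divisor exactly $P$, has $v_P(f)<0$. Then $\varepsilon_Q(f)=0$ and $\deg_{C_{\mathrm e}}\OO\langle1,f\rangle=2$.

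\textbf{Part (c), $\beta\in\F_q$.} Here there are two nodes and $\varepsilon_Q(x)=\varepsilon_{Q^-}(x)=1$, so $x$ gives $4$, not $3$. Use $y$ instead. It is regular at both nodes with values $\pm\beta\in\F_q$, has $\divi_\infty(y)=3P_\infty$, and satisfies $\varepsilon_{Q_\infty}(y)=0$ by Lemma~\ref{lem:cusp}(ii).

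\textbf{Part (d).} Here $y$ has a pole of order $2g+1=7$, so it gives degree $7$. The right function is $u=\varphi(x)$. It has $\divi_\infty(u)=4P_\infty$ and $u\in\mm_P\cap\mm_{P^-}$ by Corollary~\ref{cor:standardfunctions}(iii), so the degree is $4$. A naive count (two conditions on an $L(D)$ containing $1$) would not find it. It works for $D=4P_\infty$ only because $L(4P_\infty)=\F_q+\F_qx+\F_qx^2\subset\F_q(x)$, so the conditions at $P$ and $P^-$ coincide.

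\textbf{The $g=2$, $\beta\in\F_q$ claim.} You do not address it. It needs $u$ for the upper bound $4$, the $g^1_2$ exclusion for the lower bound $3$, and explicit examples such as Examples~\ref{ex:F3trigonal} and~\ref{ex:F3tetragonal} to show both values occur.

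\textbf{Lower bounds.}
\begin{itemize}
\item In (b) you follow the paper's argument.
\item In (c), Lemma~\ref{lem:linkage} and Lemma~\ref{lem:cusp} together give $P\sim 2P_\infty$. Since $L(2P_\infty)=\F_q+\F_qx$, this forces $x(P)\in\F_q$, a contradiction. This works for both positions of $\beta$ and avoids the paper's conorm computation with $y\mp\beta$. Your ``otherwise'' branch is vacuous: a double pole at $P_\infty$ and a double zero of $f-f(P_\infty)$ both give $\divi_\infty(f)\sim2P_\infty$.
\item In (d) you replace the paper's Castelnuovo inequality by the fact that a degree-$3$ divisor with $\ell\geq2$ on a hyperelliptic genus-$3$ curve is the $g^1_2$ plus a base point. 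That cannot happen for a pole divisor, so your linkage step there is unnecessary.
\end{itemize}
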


\begin{proof}
Throughout we use \eqref{eq:gonmin} and the degree formula \eqref{eq:degformula}. First observe that $\gon(C)\geq2$ in all cases: if $\deg_C\OO\langle1,f\rangle=1$ for some $f\notin\F_q$, then $\deg\divi_\infty(f)=1$, whence $[K:\F_q(f)]=1$ \cite[Theorem~1.4.11]{Sti09} and $K=\F_q(f)$ would be rational, contradicting $g\geq1$. The same argument shows that $\deg\divi_\infty(f)\geq2$ for every $f\in K\setminus\F_q$.

(a) Here $\Sing C_{\mathrm e}=\{Q\}$, with branch $P$, and $\tC$ is elliptic. Let $\kappa$ be a canonical divisor in $\tC$. By the Riemann--Roch theorem on $\tC$,
\[
\ell(P)=\deg P+1-g+\ell(\kappa-P)=2+0=2 ,
\]
since $\deg(\kappa-P)=-2<0$. Choose $f\in L(P)\setminus\F_q$: then $\divi_\infty(f)$ is a nonzero divisor bounded by the prime divisor $P$, so $\divi_\infty(f)=P$, of degree $2$; in particular $v_P(f)<0$, so $\varepsilon_Q(f)=0$ and
\[
\deg_{C_{\mathrm e}}\OO\langle1,f\rangle=\deg\divi_\infty(f)=2 .
\]
Hence $\gon(C_{\mathrm e})\leq2$, and therefore $\gon(C_{\mathrm e})=2$.

(b) Here $\Sing C=\{Q\}$: this holds for the curve in Theorem~\ref{thm:construction} with $\beta\notin\F_q$ and $2\leq g\leq3$, and for the cone curves in Theorem~\ref{thm:cone}, whose single node lies under $P^-$; the argument below is written for $C$ and the branch $P$, and applies verbatim to $C'$, with $P^-$ in place of $P$, since the residue of $x$ at $P^-$ is again $\alpha\notin\F_q$. By Corollary~\ref{cor:standardfunctions}(i), $x\in\OO_{\tC,P}\setminus\OO_{C,Q}$, so $\varepsilon_Q(x)=1$ and
\[
\deg_C\OO\langle1,x\rangle=\deg\divi_\infty(x)+1=2+1=3 ,
\]
whence $\gon(C)\leq3$. Suppose $\gon(C)=2$, realized by $f$: then $\deg\divi_\infty(f)=2$ and $\varepsilon_Q(f)=0$, so $[K:\F_q(f)]=2$ and, by the uniqueness of the rational subfield of index two of $K$ for $g\geq2$ (Subsection~\ref{subsec:degtwo}), $\F_q(f)=\F_q(x)$: thus
\[
f=\frac{ax+b}{cx+d},\qquad ad-bc\neq0 .
\]
The functions $ax+b$ and $cx+d$ are then $\F_q$-linearly independent, so they span the same $\F_q$-subspace of $K$ as $1$ and $x$, and consequently generate the same $\OO_{Q'}$-module at every point $Q'\in C$:
\[
(ax+b)\OO_{Q'}+(cx+d)\OO_{Q'}=\OO_{Q'}+x\,\OO_{Q'} .
\]
Dividing by $cx+d$ gives $\OO\langle1,f\rangle=\divi(cx+d)\cdot\OO\langle1,x\rangle\sim\OO\langle1,x\rangle$, and equivalent divisors have equal degrees:
\[
2=\deg_C\OO\langle1,f\rangle=\deg_C\OO\langle1,x\rangle=3 ,
\]
a contradiction. Hence $\gon(C)=3$.

(c) Here $g=1$, so $\Sing C$ contains the cusp $Q_\infty$ besides the nodes, and every local term in Proposition~\ref{prop:degformula2} is nonnegative: at the nodes by \eqref{eq:degformula}, at the cusp by Lemma~\ref{lem:cusp}. We use throughout that $\deg 3P_\infty=3>2g-2=0$, so that Riemann--Roch gives $\ell(3P_\infty)=3$ and $L(3P_\infty)=\F_q\oplus\F_qx\oplus\F_qy$, the three functions having pole orders $0$, $2$ and $3$ at $P_\infty$; in particular $\divi_\infty(y)=3P_\infty$ and $\divi_\infty(x)=2P_\infty$.

Assume first $\beta\in\F_q$, so that $\Sing C=\{Q,Q^-,Q_\infty\}$ and $\pi=4$. By Corollary~\ref{cor:standardfunctions}(ii), $y\in\OO_{C,Q}\cap\OO_{C,Q^-}$, so $\varepsilon_Q(y)=\varepsilon_{Q^-}(y)=0$, while $\varepsilon_{Q_\infty}(y)=0$ by Lemma~\ref{lem:cusp}(ii); hence
\[
\deg_C\OO\langle1,y\rangle=\deg\divi_\infty(y)=3 ,
\]
whence $\gon(C)\leq3$.

Suppose $\gon(C)=2$, realized by $f$: by the nonnegativity of the local terms, $\deg\divi_\infty(f)=2$ and $\varepsilon_Q(f)=\varepsilon_{Q^-}(f)=0$. Applying Lemma~\ref{lem:linkage} at each node, with $E$ of degree $0$, we obtain
\[
P\sim\divi_\infty(f)\sim P^- .
\]
We claim this is impossible. Pass to the constant field extension $\tC\otimes\F_{q^2}$ and apply the conorm $\Con=\Con_{K\F_{q^2}|K}$ of \cite[Lemma~5.1.9]{Sti09}: since $v_{P'}(h)=e(P'|\,\cdot\,)\,v(h)$ for $h\in K$, conorms of principal divisors are principal, so $P\sim P^-$ would imply
\[
P_1+P_2=\Con(P)\sim\Con(P^-)=P_1^-+P_2^- 
\]
on the elliptic curve $\tC\otimes\F_{q^2}$. Recall from Lemma~\ref{lem:lambda} that here $\lambda=\beta$ is constant, so by Lemma~\ref{lem:h}
\[
f(x)-\beta^2=\varphi(x)h(x),\qquad h(x)=h_1(x-x_3),\quad h_1\in\F_q^{\times},\ x_3\in\F_q ,
\]
$h$ having degree $2g-1=1$. Since $f-\beta^2=\varphi h$ is separable ($\varphi$ is irreducible, and its roots $\alpha,\alpha^q\notin\F_q$ differ from the rational root $x_3$ of $h$) and $\beta\neq0$, the functions $y-\beta$ and $y+\beta$ have simple zeros and
\[
\divi(y-\beta)=P_1+P_2+P_3-3P_\infty,
\qquad
\divi(y+\beta)=P_1^-+P_2^-+P_3^--3P_\infty ,
\]
where $P_3:=(x_3,\beta)$ and $P_3^-:=(x_3,-\beta)$; moreover $\divi(x-x_3)=P_3+P_3^--2P_\infty$. Hence
\[
P_1+P_2\sim3P_\infty-P_3\sim P_\infty+P_3^-,
\qquad
P_1^-+P_2^-\sim P_\infty+P_3 ,
\]
and $P_1+P_2\sim P_1^-+P_2^-$ would give $P_3^-\sim P_3$. On an elliptic curve, each divisor class of degree one contains a unique rational point \cite[Proposition~6.1.6]{Sti09}, so $P_3^-=P_3$, that is, $\beta=-\beta$: impossible, since $q$ is odd and $\beta\neq0$. Hence $\gon(C)=3$ when $\beta\in\F_q$.

Assume now $\beta\notin\F_q$, so that $\Sing C=\{Q,Q_\infty\}$ and $\pi=3$. By Corollary~\ref{cor:standardfunctions}(i) and Lemma~\ref{lem:cusp}(ii),
\[
\deg_C\OO\langle1,x\rangle=\deg\divi_\infty(x)+\varepsilon_Q(x)+\varepsilon_{Q_\infty}(x)=2+1+0=3 ,
\]
whence $\gon(C)\leq3$. Suppose $\gon(C)=2$, realized by $f$: by nonnegativity again, $\deg\divi_\infty(f)=2$, $\varepsilon_Q(f)=0$ and $\varepsilon_{Q_\infty}(f)=0$. We claim that this forces $P\sim2P_\infty$ on $\tC$. Indeed, by Lemma~\ref{lem:cusp}, $\varepsilon_{Q_\infty}(f)=0$ excludes $v_{P_\infty}(f)=-1$; so either $v_{P_\infty}(f)=-2$, in which case $\divi_\infty(f)=2P_\infty$, or $f$ is regular at $P_\infty$ and, by Lemma~\ref{lem:cusp}(i), $v_{P_\infty}\bigl(f-f(P_\infty)\bigr)\geq2$, so that the zero divisor of $f-f(P_\infty)$, of degree $2$, equals $2P_\infty$ and again $\divi_\infty(f)=\divi_\infty\bigl(f-f(P_\infty)\bigr)\sim2P_\infty$. On the other hand, at the node: if $v_P(f)<0$, then $P\leq\divi_\infty(f)$, so $\divi_\infty(f)=P$ by degrees; if $f$ is regular at $P$, then Lemma~\ref{lem:linkage} with $E$ of degree $0$ gives $P\sim\divi_\infty(f)$. In all cases $P\sim\divi_\infty(f)\sim2P_\infty$, proving the claim. Passing to $\tC\otimes\F_{q^2}$ by the conorm, as in the previous case, $P\sim2P_\infty$ yields $P_1+P_2\sim2P_\infty$. But $\divi(x-\alpha)=P_1+P_1^--2P_\infty$ on $\tC\otimes\F_{q^2}$, so $2P_\infty-P_1\sim P_1^-$, and the uniqueness of the rational point in a divisor class of degree one on an elliptic curve \cite[Proposition~6.1.6]{Sti09} forces $P_2=P_1^-=(\alpha,-\beta)$, that is, $\alpha^q=\alpha$: impossible, since $\alpha\notin\F_q$. Hence $\gon(C)=3$ also when $\beta\notin\F_q$.

(d) Assume $\beta\in\F_q$ and $2\leq g\leq3$, so that $\Sing C=\{Q,Q^-\}$. By Corollary~\ref{cor:standardfunctions}(iii), $u=\varphi(x)\in\OO_{C,Q}\cap\OO_{C,Q^-}$, so $\varepsilon_Q(u)=\varepsilon_{Q^-}(u)=0$ and
\[
\deg_C\OO\langle1,u\rangle=\deg\divi_\infty(u)=4 :
\]
$\gon(C)\leq4$. Next, $\gon(C)>2$: if $\deg_C\OO\langle1,f\rangle=2$, then $\deg\divi_\infty(f)=2$, and exactly as in case (b) the uniqueness of the rational subfield of index two gives $\F_q(f)=\F_q(x)$ and $\OO\langle1,f\rangle\sim\OO\langle1,x\rangle$, whose degree is now
\[
\deg\divi_\infty(x)+\varepsilon_Q(x)+\varepsilon_{Q^-}(x)=2+1+1=4\neq2 ,
\]
a contradiction.

Now let $g=3$ and suppose $\gon(C)=3$, which is realized by some $f$:
\[
3=\deg\divi_\infty(f)+\varepsilon_Q(f)+\varepsilon_{Q^-}(f),
\qquad
\deg\divi_\infty(f)\geq2 .
\]
The value $\deg\divi_\infty(f)=2$ is impossible: it would force, as above, $\OO\langle1,f\rangle\sim\OO\langle1,x\rangle$ of degree $4\neq3$. Hence $[K:\F_q(f)]=\deg\divi_\infty(f)=3$. Apply Castelnuovo's inequality \cite[Theorem~3.11.3]{Sti09} to the subfields $F_1=\F_q(x)$ and $F_2=\F_q(f)$ of $K$: their compositum is $K$, since $[K:F_1]=2$ is prime and $f\notin F_1$ --- indeed $[K:\F_q(w)]=2\,[\F_q(x):\F_q(w)]$ is even for every $w\in\F_q(x)\setminus\F_q$, while $[K:\F_q(f)]=3$. With $n_1=2$, $g_1=0$, $n_2=3$, $g_2=0$, Castelnuovo's inequality gives
\[
g\leq n_1g_2+n_2g_1+(n_1-1)(n_2-1)=2 ,
\]
contradicting $g=3$. Hence $\gon(C)=4$ for $g=3$.

Finally, for $g=2$ the two bounds just proved give $\gon(C)\in\{3,4\}$; both values occur, by Examples~\ref{ex:F3trigonal} and~\ref{ex:F3tetragonal} below.
\end{proof}

\begin{remark}\label{rem:gong4}
For the curves in Theorem~\ref{thm:construction} with $g\geq4$, the point $Q_\infty$ enters the degree formula of Proposition~\ref{prop:degformula2} with a possibly negative local term bounded in absolute value by $\delta_{Q_\infty}$, so that, in principle, the singularity at infinity could lower degrees. The lower bound $\gon(C)\geq3$ nevertheless holds: a $g^1_2$ would force, as in case (b), $\OO\langle1,f\rangle\sim\OO\langle1,x\rangle$, whose degree is
\[
\deg\divi_\infty(x)+\varepsilon_Q(x)+\varepsilon_{Q_\infty}(x)\geq2+1+0=3 ,
\]
since $\varepsilon_{Q_\infty}(x)\geq0$: indeed, $-2$ (since $v_{P_\infty}(x)=-2$) and $2g-7$ (by the proof of Theorem \ref{thm:construction}.(e)) already lie in $S\bigl(\OO_{C,Q_\infty}+x\,\OO_{C,Q_\infty}\bigr)\setminus S(Q_\infty)$. The exact determination of $\gon(C)$ for $g\geq4$ requires finer information on the value semigroup $S(Q_\infty)$ and is left open. Note that the maximal curves in Theorem~\ref{thm:cone}, built from the same data, are trigonal for every $g\geq4$, by case (b) of Theorem~\ref{thm:gonality}.
\end{remark}

\subsection{The case \texorpdfstring{$g=2$, $\beta\in\F_q$}{g=2, beta rational}}\label{subsec:g2case}

In particular, $\pi=4$, $\Sing C=\{Q,Q^-\}$, $\deg\kappa=2$ and $\ell(\kappa)=2$, where as before $\kappa$ denotes a canonical divisor on $\tC$.

\begin{lemma}\label{lem:bpf}
Let $D\geq0$ be a divisor of degree $3$ on $\tC$. Then $\ell(D)=2$, and the complete linear system $|D|$ is base-point-free if and only if $D=\divi_\infty(h)$ for some $h\in K$.
\end{lemma}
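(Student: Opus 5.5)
The plan is to use Riemann--Roch on the genus-two curve $\tC$ over $\F_q$, then identify base-point-freeness of $|D|$ with $D$ being a pole divisor.

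First, $\deg D=3>2g-2=2$, so $\deg(\kappa-D)=-1<0$ and $\ell(\kappa-D)=0$. Riemann--Roch gives $\ell(D)=3+1-2=2$.

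For the equivalence, we work with base points in the sense of closed points of $\tC$. A closed point $R$ in the support of $D$ is a base point of $|D|$ exactly when $\ell(D-R)=\ell(D)$, that is, when every $E\in|D|$ satisfies $E\geq R$.

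Suppose first that $D=\divi_\infty(h)$ with $h\in K\setminus\F_q$. Since $\ell(D)=2$ and $1,h\in L(D)$ are independent, $L(D)=\F_q+\F_qh$. The divisors in $|D|$ are then $D+\divi(a+bh)$. For $b\neq0$ this equals $\divi_0(h+a/b)$, and for $b=0$ it is $D$ itself. Now $D=\divi_\infty(h)$ and $\divi_0(h)$ have disjoint supports, so no point lies in every member. Hence $|D|$ is base-point-free.

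Conversely, suppose $|D|$ is base-point-free. Pick $h\in L(D)\setminus\F_q$, so that $L(D)=\F_q+\F_qh$ and $\divi_\infty(h)\leq D$. Write $D=\divi_\infty(h)+B$ with $B\geq0$. Every element of $|D|$ has the form $\divi(a+bh)+D=\divi(a+bh)+\divi_\infty(h)+B$. Since $\divi_\infty(a+bh)\leq\divi_\infty(h)$ (with equality when $b\neq0$), each such divisor is $\geq B$. So every point of $\operatorname{supp}B$ is a base point, and base-point-freeness forces $B=0$, i.e.\ $D=\divi_\infty(h)$.

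The one subtle point is the definition of a base point for a closed point of higher degree over $\F_q$ rather than a geometric point. Taking the definition ``$R$ lies below every member'', the argument above is purely arithmetic over $\F_q$ and needs no extension of scalars. As a consistency check, if one instead uses the criterion $\ell(D-R)=\ell(D)-\deg R$, the $\geq B$ argument still applies verbatim. I expect nothing beyond Riemann--Roch to be needed, so there is no genuine obstacle.
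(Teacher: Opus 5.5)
Your proof is correct and is essentially the paper's: Riemann--Roch gives $\ell(D)=2$, and both directions rest on the fact that a closed point $R$ is a base point exactly when $L(D-R)=L(D)$, i.e.\ when $1,h\in L(D-R)$. The only cosmetic difference is in the ``$\Leftarrow$'' direction, where the paper derives the degree contradiction $\divi_\infty(h)\leq D-R$ and you use that the members $D=\divi_\infty(h)$ and $\divi_0(h)$ have disjoint supports. You should, however, drop the closing ``consistency check'', because $\ell(D-R)=\ell(D)-\deg R$ is not a base-point criterion: for any point $R\leq D$ of degree $\geq 2$ one always has $\ell(D-R)\geq 1>\ell(D)-\deg R$, so under that reading the ``$\Leftarrow$'' direction would fail, and the definition you actually use is exactly the paper's.
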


\begin{proof}
By Riemann--Roch, $\ell(D)=3+1-2+\ell(\kappa-D)=2$, since $\deg(\kappa-D)=-1<0$.

Suppose $D=\divi_\infty(h)$, and let $R$ be a base point of $|D|$, that is, a closed point with $L(D-R)=L(D)$. Then $h\in L(D-R)$, so $\divi_\infty(h)\leq D-R$, and taking degrees yields the contradiction $3\leq3-\deg R$.

Conversely, suppose $|D|$ is base-point-free and pick $h\in L(D)\setminus\F_q$, so that $L(D)=\F_q\oplus\F_qh$ and $\divi_\infty(h)\leq D$. If $\divi_\infty(h)\neq D$, choose a closed point $R\leq D-\divi_\infty(h)$: then $1,h\in L(D-R)$, so $\ell(D-R)=2=\ell(D)$ and $R$ is a base point, a contradiction. Hence $\divi_\infty(h)=D$.
\end{proof}

\begin{theorem}\label{thm:char}
Assume $\beta\in\F_q$ and $g=2$. Then $\gon(C)=3$ if and only if there exists a divisor $D\geq0$ of degree $3$ on $\tC$ such that:
\begin{enumerate}
\item[(1)] $D\sim P+\widetilde R$ and $D\sim P^-+\widehat R$ for some rational points $\widetilde R,\widehat R\in\tC(\F_q)$;
\item[(2)] $D\not\sim\kappa+R$ for every rational point $R\in\tC(\F_q)$.
\end{enumerate}
Otherwise $\gon(C)=4$.
\end{theorem}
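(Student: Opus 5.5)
By Theorem~\ref{thm:gonality}(d) we already know $\gon(C)\in\{3,4\}$. It therefore suffices to show that a $g^1_3$ exists if and only if some $D$ satisfies (1) and (2). By \eqref{eq:gonmin} and \eqref{eq:degformula}, $\gon(C)=3$ means there is $f\in K\setminus\F_q$ with
\[
3=\deg\divi_\infty(f)+\varepsilon_Q(f)+\varepsilon_{Q^-}(f).
\]
As in the proof of (d), $\deg\divi_\infty(f)=2$ is excluded, since it forces $\OO\langle1,f\rangle\sim\OO\langle1,x\rangle$, which has degree $4$. Hence $\deg\divi_\infty(f)=3$ and $\varepsilon_Q(f)=\varepsilon_{Q^-}(f)=0$. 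So the task is to characterize the divisors $D=\divi_\infty(f)$ of degree $3$ for which $f$ can be chosen with both local terms zero.

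\emph{Necessity.} Let $f$ realize the $g^1_3$ and put $D:=\divi_\infty(f)$, so $D\geq0$ and $\deg D=3$. Apply Lemma~\ref{lem:linkage} at $Q$ and at $Q^-$. The effective divisor $E$ produced there has degree $1$, so it is a rational point. This gives $D\sim P+\widetilde R$ and $D\sim P^-+\widehat R$, which is (1). For (2), $D$ is the pole divisor of a function, so $|D|$ is base-point-free by Lemma~\ref{lem:bpf}. If $D\sim\kappa+R$ with $R$ rational, then $\ell(\kappa+R)=\ell(D)=2=\ell(\kappa)$. Hence $R$ is a base point of $|\kappa+R|=|D|$, a contradiction.

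\emph{Sufficiency.} Given $D$ satisfying (1) and (2), first show that $|D|$ is base-point-free. A base point $R$ gives $\ell(D-R)=2$ with $\deg(D-R)=2$. By Riemann--Roch in genus $2$ this forces $D-R\sim\kappa$. We need $R$ rational; I expect this to follow because a base point of a degree-$3$ system with $\ell=2$ and $\deg\kappa=2$ has degree $1$. Then $D\sim\kappa+R$, contradicting (2). Lemma~\ref{lem:bpf} now supplies $h$ with $\divi_\infty(h)=D'$ for some $D'\sim D$; replacing $D$ by $D'$ preserves (1) and (2), so we may assume $D=\divi_\infty(h)$.

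It remains to adjust $h$ inside $L(D)=\F_q\oplus\F_qh$ so that $\varepsilon_Q=\varepsilon_{Q^-}=0$, and this is the main obstacle. Adding constants does not change the residue class of $h$ modulo $\F_q$, so we must get regularity at $P$ and at $P^-$ from (1) directly.
\begin{itemize}
\item By (1) there is $h_1$ with $\divi(h_1)=P+\widetilde R-D$, so $h_1\in L(D)$ vanishes at $P$. Writing $h_1=a+bh$ and noting $b\neq0$ (since $h_1$ is nonconstant), $h$ has residue $-a/b\in\F_q$ at $P$, or has a pole at $P$ if $P\leq D$.
\item The same argument with $\widehat R$ shows $h$ has rational residue at $P^-$, or a pole there.
\item Lemma~\ref{lem:regularity} then gives $\varepsilon_Q(h)=\varepsilon_{Q^-}(h)=0$.
\item Therefore $\deg_C\OO\langle1,h\rangle=3$ and $\gon(C)=3$.
\end{itemize}
The points needing care are the case where $P$ or $P^-$ lies in the support of $D$ (handled by the pole alternative in $\varepsilon$), and the rationality of the base point in the base-point-freeness step.
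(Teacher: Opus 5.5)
Your proposal follows the paper's proof step for step. Necessity goes through Lemma~\ref{lem:linkage} at both nodes and the base-point-freeness of $|\divi_\infty(f)|$ from Lemma~\ref{lem:bpf}. Sufficiency first shows that $|D|$ is base-point-free, then writes $D=\divi_\infty(h)$, and uses the function in $L(D)=\F_q\oplus\F_q h$ supplied by (1) to force either a rational residue or a pole of $h$ at $P$ and at $P^-$.

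\textbf{The gap.} The one step you did not prove is the one you flagged: that a base point $R$ of $|D|$ must be rational. This step is needed, because condition (2) only speaks about rational $R$, and ``I expect this to follow'' is not an argument. It is easy to close.
\begin{enumerate}
\item Since $1\in L(D)=L(D-R)$, you have $D-R\geq0$.
\item If $\deg R\geq2$, then $0\leq\deg(D-R)\leq1$ while $\ell(D-R)=2$.
\item This gives a nonconstant function whose pole divisor has degree at most $1$. That forces $K$ to be rational, contradicting $g=2$. The paper reaches the same conclusion with Clifford's theorem: $\ell(D-R)\leq1+\tfrac12\deg(D-R)<2$.
\end{enumerate}
Only once $R$ is known to be rational does Riemann--Roch give $D\sim\kappa+R$ with $R\in\tC(\F_q)$, contradicting (2).

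\textbf{Two small bookkeeping points.}
\begin{enumerate}
\item Lemma~\ref{lem:bpf} gives $D=\divi_\infty(h)$ for $D$ itself, so no replacement $D'$ is needed.
\item Your claim ``$b\neq0$ since $h_1$ is nonconstant'' fails exactly when $D=P+\widetilde R$, where $h_1$ is constant. That case is covered by your pole alternative, but it is cleaner to split as the paper does:
\begin{itemize}
\item if $b=0$, then $D=P+\widetilde R\geq P$, so $h$ has a pole at $P$;
\item if $b\neq0$, then $\divi_\infty(a+bh)=D$ and hence $\divi_0(a+bh)=P+\widetilde R$, so $h\equiv-a/b\in\F_q$ modulo $\mm_P$.
\end{itemize}
\end{enumerate}
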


\begin{proof}
By Theorem~\ref{thm:gonality}, $\gon(C)\in\{3,4\}$, and $\gon(C)=3$ holds precisely when $\deg_C\OO\langle1,f\rangle=3$ for some $f\in K\setminus\F_q$.

($\Leftarrow$) Let $D$ be as in the statement. We first show that $|D|$ is base-point-free. If $R$ has $\deg R\geq2$, then $0\leq\deg(D-R)\leq1\leq2g-2$, and Clifford's theorem \cite[Theorem~1.6.13]{Sti09} gives
\[
\ell(D-R)\leq1+\tfrac{1}{2}\deg(D-R)\leq\tfrac{3}{2},
\]
so $\ell(D-R)\leq1<2=\ell(D)$ and $R$ is not a base point. If $R\in\tC(\F_q)$ is a rational base point, then $\ell(D-R)=\ell(D)=2$ with $\deg(D-R)=2$, and Riemann--Roch gives $\ell(\kappa-D+R)=1$ with $\deg(\kappa-D+R)=0$; a divisor of degree zero with a nonzero section is principal, so $D\sim\kappa+R$, contradicting (2). Hence $|D|$ is base-point-free and, by Lemma~\ref{lem:bpf}, $D=\divi_\infty(f)$ for some $f$, with $L(D)=\F_q\oplus\F_qf$.

From $D\sim P+\widetilde R$ we get $P+\widetilde R=D+\divi(z)$ for some $z\in L(D)\setminus\{0\}$; write $z=a+bf$ with $a,b\in\F_q$. If $b=0$, then $\divi(z)=0$ and $\divi_\infty(f)=D=P+\widetilde R\geq P$, so $v_P(f)<0$ and $\varepsilon_Q(f)=0$. If $b\neq0$, then $\divi_\infty(z)=\divi_\infty(f)=D$, whence $\divi_0(z)=P+\widetilde R$ and $z=a+bf$ vanishes at $P$: thus $f\equiv-a/b\in\F_q$ modulo $\mm_P$ and $f\in\OO_{C,Q}$ by Lemma~\ref{lem:regularity}, so again $\varepsilon_Q(f)=0$. The condition $D\sim P^-+\widehat R$ gives $\varepsilon_{Q^-}(f)=0$ in the same way. Therefore
\[
\deg_C\OO\langle1,f\rangle=\deg\divi_\infty(f)=3
\qquad\text{and}\qquad
\gon(C)=3 .
\]

($\Rightarrow$) Assume $\gon(C)=3$, realized by $f$. As in the proof of Theorem~\ref{thm:gonality}(d),
\[
\deg\divi_\infty(f)=3,
\qquad
\varepsilon_Q(f)=\varepsilon_{Q^-}(f)=0 .
\]
Set $D:=\divi_\infty(f)\geq0$. By Lemma~\ref{lem:linkage} applied at $Q$ and at $Q^-$, $D\sim P+\widetilde R$ and $D\sim P^-+\widehat R$ with $\widetilde R,\widehat R$ effective of degree $1$, that is, rational points: condition (1) holds. For (2): if $D\sim\kappa+R$ for some $R\in\tC(\F_q)$, then $\ell(D-R)=\ell(\kappa)=2=\ell(D)$, so $R$ would be a base point of $|D|$; but $|D|$ is base-point-free by Lemma~\ref{lem:bpf}, since $D=\divi_\infty(f)$.
\end{proof}

We now exhibit both behaviours over $\F_3$.

\begin{example}[$q=3$, $\gon(C)=3$]\label{ex:F3trigonal}
Let $\tC\colon y^2=F(x)$ be defined over $\F_3$ with $F(x)=x^5-x+1$. Here $F'(x)=2x^4+2=2(x^4+1)$, and so a common root of $F$ and $F'$ would satisfy $x^4=2$ and $0=F(x)=x\cdot x^4-x+1=x+1$, i.e.\ $x=2$, but $2^4=1\neq2$. Thus $F$ is squarefree and $\tC$ is smooth of genus $2$. Moreover $\tC$ is geometrically irreducible, since $\deg F=5$ is odd. As $F(0)=F(1)=F(2)=1$ is a square, the rational points of $\tC$ are the six affine points $(i,\pm1)$, $i\in\{0,1,2\}$, together with $P_\infty$.

Take a root $\alpha\in\F_9$ of the irreducible polynomial $\varphi(x)=x^2+1\in\F_3[x]$. Then $\alpha^4=1$ and
\[
F(\alpha)=\alpha^5-\alpha+1=\alpha-\alpha+1=1 ,
\]
so $f(\alpha)\neq0$ and $\beta=1\in\F_3$: Theorem~\ref{thm:construction}(d) produces a curve $C$ with two non-split nodes and $\pi=4$, and so $\gon(C)\in\{3,4\}$ by Theorem~\ref{thm:gonality}. Consider
\[
f:=\frac{y+1}{x^2-1}\in K .
\]
At infinity one has $v_{P_\infty}(f)=-5+4=-1$. At the affine rational points we have
\[
(y-1)(y+1)=y^2-1=F(x)-1=x^5-x=x(x-1)(x+1)(x^2+1),
\]
and since none of the six points is a Weierstrass point, at each point $(i,\pm1)$ exactly one of $y\mp1$ has a simple zero: $v(y-1)=1$ at $(i,1)$ and $v(y+1)=1$ at $(i,-1)$, while $v(x^2-1)=1$ at the four points with $i\in\{1,2\}$. Hence $f$ is regular at $(1,-1)$ and $(2,-1)$, it has simple poles at $(1,1)$ and $(2,1)$, and it is regular at every other affine point of $\tC$ (at the points above $x^2+1$, namely $P$ and $P^-$, the denominator satisfies $\alpha^2-1=-2\neq0$). Therefore
\[
\divi_\infty(f)=P_\infty+(1,1)+(2,1),
\qquad
\deg\divi_\infty(f)=3 .
\]
At the nodes, the values of $f$ along the branches are
\[
f(P_1)=\frac{1+1}{\alpha^2-1}=\frac{2}{-2}=-1\in\F_3 ,
\qquad
f(P_1^-)=\frac{-1+1}{\alpha^2-1}=0\in\F_3 ,
\]
so $f\in\OO_{C,Q}\cap\OO_{C,Q^-}$ by Lemma~\ref{lem:regularity} and $\varepsilon_Q(f)=\varepsilon_{Q^-}(f)=0$. Now the degree formula \eqref{eq:degformula} yields $\deg_C\OO\langle1,f\rangle=3$, whence
$
\gon(C)=3,
$
i.e., the curve $C$ is trigonal.
\end{example}

\begin{example}[$q=3$, $\gon(C)=4$]\label{ex:F3tetragonal}
Let $\tC\colon y^2=F(x)$ be given over $\F_3$ with $F(x)=x^5-x^3-1$. Here $F'(x)=5x^4-3x^2=2x^4$ and $F(0)=-1\neq0$, so $\gcd(F,F')=1$. Thus $\tC$ is smooth of genus $2$ and geometrically irreducible, with a single rational point at infinity. There are no affine rational points, since $F(0)=F(1)=F(2)=2$ is not a square in $\F_3$. Thus
\[
\tC(\F_3)=\{P_\infty\}.
\]
Take a root $\alpha$ of the irreducible polynomial $x^2-x-1\in\F_3[x]$, so that $\alpha^2=\alpha+1$, $\alpha^3=2\alpha+1$, $\alpha^4=2$, $\alpha^5=2\alpha$, and
\[
F(\alpha)=2\alpha-(2\alpha+1)-1=-2=1.
\]
Hence $f(\alpha)\neq0$ and $\beta=1\in\F_3$. Now Theorem~\ref{thm:construction}(d) applies, and so $C$ has two non-split nodes, $\pi=4$, and $\gon(C)\in\{3,4\}$.

We show that no divisor $D$ as in Theorem~\ref{thm:char} exists, so that $\gon(C)=4$. Since $\tC(\F_3)=\{P_\infty\}$, condition (1) in the theorem implies $\widetilde R=\widehat R=P_\infty$, hence
\[
D\sim P+P_\infty\sim P^-+P_\infty
\qquad\text{and therefore}\qquad
P\sim P^- .
\]
We claim the latter is impossible. Note that $P\neq P^-$, since $\beta\neq0$. If $P\sim P^-$, pick $z$ with $\divi(z)=P^--P$: then $z\in L(P)\setminus\F_3$ and $\ell(P)\geq2$. Riemann--Roch gives $\ell(P)=2+1-2+\ell(\kappa-P)$, so $\ell(\kappa-P)\geq1$ with $\deg(\kappa-P)=0$, forcing $\kappa-P\sim0$, i.e.
\[
P\sim\kappa .
\]
On the other hand, $\deg 2P_\infty=2=2g-2$ and $\ell(2P_\infty)=2=g$, as $L(2P_\infty)=\F_3\oplus\F_3x$, which by Riemann--Roch means that the divisor $2P_\infty$ is canonical. Thus $P\sim2P_\infty$, and there is a function $z=a+bx\in L(2P_\infty)$, $b\neq0$, with $\divi(z)=P-2P_\infty$. Writing $z=b(x-c)$ with $c\in\F_3$, we get $\divi_0(x-c)=P$, and so $x\equiv c\in\F_3$ modulo $\mm_P$, contradicting $x\bmod\mm_P=\alpha\notin\F_3$. This shows that $\gon(C)=4$, i.e., the curve $C$ is tetragonal.
\end{example}

\begin{remark}\label{rem:tetragonalcriterion}
In Example~\ref{ex:F3tetragonal} we only used the condition that $\tC$ is a curve of genus 2 as in \eqref{eq:model} with $\tC(\F_q)=\{P_\infty\}$ and $\beta\in\F_q$. Thus every such pair $(\tC,P)$ produces a tetragonal curve $C$. This provides many further examples.
\end{remark}

\end{document}